\documentclass[11pt,
]{amsart}
\usepackage{
amssymb, graphicx
}
\usepackage{mathrsfs} 
\usepackage{amsthm}
\usepackage{bm}
\usepackage{graphicx,color}
\usepackage[dvipsnames]{xcolor}
\usepackage{amsmath}
\usepackage{enumitem}
\allowdisplaybreaks[4]
\newtheorem{theorem}{Theorem}
\newtheorem{lemma}[theorem]{Lemma}
\newtheorem{proposition}[theorem]{Proposition}
\newtheorem{definition}{Definition}

\newtheorem{remark}{Remark}
\theoremstyle{remark}

\date{\today}

\title[Determination of hyperelastic parameters]{Determination of hyperelastic parameters with non-trapping wavespeeds}

      \author[G. Uhlmann]{Gunther Uhlmann}
\address{G. Uhlmann: Department of Mathematics, University of Washington, Seattle, WA 98195, USA (\tt{gunther@math.washington.edu})
}
  \author[J. Zhai]{Jian Zhai}
\address{School of Mathematical Sciences,
  Fudan University, Shanghai 200433, China; 
Center for Applied Mathematics,
  Fudan University, Shanghai 200433, China}
 \email{jianzhai@fudan.edu.cn}
   \thanks{J. Zhai is supported by National Key Research and Development Programs of China (No. 2023YFA1009103), NSFC(No. 12471396), Science and Technology Commission of Shanghai Municipality (23JC1400501)}

\begin{document}
\begin{abstract}
We consider the elastic wave equation in three-dimensional isotropic medium with up to quadratic nonlinear terms. We show that five out of the six elastic parameters can be uniquely determined by boundary measurements under the assumption that the wavespeeds are non-trapping, while the last one can also be determined in a generic setting.
\end{abstract}
 \keywords{elastic wave equation, Gaussian beams, inverse boundary value problem, Lam\'e parameters}
\maketitle

\section{Introduction}
Assume $u=(u_1,u_2,u_3)^T$ is the displacement vector of a three-dimensional isotropic elastic body, then the (right) Cauchy-Green strain tensor is 
\[
\varepsilon(u)=\frac{1}{2}(\nabla u+\nabla^Tu+\nabla^Tu\nabla u).
\]
Componentwisely
\[
\varepsilon_{ij}=\frac{1}{2}\left(\frac{\partial u_i}{\partial x_j}+\frac{\partial u_j}{\partial x_i}+\sum_{m=1}^3\frac{\partial u_m}{\partial x_i}\frac{\partial u_m}{\partial x_j}\right).
\]
For hyperelastic material, there exists a stored energy function $H(\nabla u)$, representing the potential energy in the Hamiltonian. The Piola-Kirchhoff stress tensor is then given by 
\[
\sigma=\frac{\partial H}{\partial (\nabla u)},
\]
or componentwisely
\[
\sigma_{ij}=\frac{\partial H}{\partial (\partial_j u_i)}.
\]
 Then the motion of displacement is characterized by a nonlinear system
\begin{equation}\label{elasticwaveequation}
\rho\partial^2_tu-\nabla\cdot\sigma=0,
\end{equation}
that is
\[
\rho\partial_t^2u_i-\sum_{j=1}^3\partial_j\sigma_{ij}=0,
\]
where $\rho>0$ is the density of mass. See \cite{ciarlet2021mathematical,gurtin1981topics} for more details. 

For isotropic material, it is shown \cite{ogden1997non} that $H$ depends only on the principal invariants of $\varepsilon$, here in terms of
\[
\begin{split}
&\kappa_1=2\mathrm{trace}(\varepsilon),\\
&\kappa_2=2(\mathrm{trace}(\varepsilon)^2-\mathrm{trace}(\varepsilon^2)),\\
&\kappa_3=\frac{4}{3}\left(\mathrm{trace}(\varepsilon)^3-3\mathrm{trace}(\varepsilon)\mathrm{trace}(\varepsilon^2)+2\mathrm{trace}(\varepsilon^3)\right).
\end{split}
\]
 We will use the Taylor expansions with respect to small displacement $u$, and truncate the equation \eqref{elasticwaveequation} up to the second order terms in $u$. Therefore, relevant terms in $H$ are
\[
H=H_0+\sigma_1\kappa_1+\sigma_2\kappa_2+\sigma_3\kappa_3+\frac{1}{2}\sigma_{11}\kappa_1^2+\sigma_{12}\kappa_1\kappa_2+\frac{1}{6}\sigma_{111}\kappa_1^3+\mathrm{l.o.t.}.
\]
Denote $G:=\nabla u$, i.e., $G_{ij}=\partial_j u_i$, and $\nabla^Tu=G^T$. Then we can write
\[
\begin{split}
\kappa_1=&2\mathrm{trace}(G)+\mathrm{trace}(G^TG),\\
\kappa_2=&2\mathrm{trace}(G)^2+2\mathrm{trace}(G)\mathrm{trace}(G^TG)-\mathrm{trace}(G^2)\\
&-\mathrm{trace}(G^TG)-2\mathrm{trace}(GG^TG)+\mathrm{l.o.t.},\\
\kappa_3=&\frac{4}{3}\mathrm{trace}(G)^3-2\mathrm{trace}(G)\mathrm{trace}(G^2)-2\mathrm{trace}(G)\mathrm{trace}(G^TG)\\
&+\frac{2}{3}\mathrm{trace}(G^3)+2\mathrm{trace}(GG^TG)+\mathrm{l.o.t.}.
\end{split}
\]
Here we have used the fact that $\mathrm{trace}(AB)=\mathrm{trace}(BA)$ and $\mathrm{trace}(A)=\mathrm{trace}(A^T)$.
Note that
\[
\begin{split}
&\frac{\partial\mathrm{trace}(G)}{\partial G}=I,\\
&\frac{\partial\mathrm{trace}(G^TG)}{\partial G}=2G,\\
&\frac{\partial\mathrm{trace}(G^2)}{\partial G}=2G^T,\\
&\frac{\partial\mathrm{trace}(G^3)}{\partial G}=3(G^T)^2,\\
&\frac{\partial\mathrm{trace}(GG^TG)}{\partial G}=G^2+GG^T+G^TG.
\end{split}
\]
We impose the condition $\sigma_1=0$, which means that the reference configuration is stress-free. Then it follows by tedious calculation that
\[
\begin{split}
\frac{\partial H}{\partial(\nabla u)}=&4(\sigma_{11}+\sigma_2)(\nabla\cdot u)I-2\sigma_2(\nabla u+\nabla^Tu)+4(\sigma_{111}+3\sigma_{12}+\sigma_3)(\nabla\cdot u)^2 I\\
&+2(\sigma_{11}-\sigma_{12}+\sigma_2-\sigma_3)\left(2(\nabla\cdot u)\nabla u+\mathrm{trace}(\nabla^T u\nabla u)I\right)\\
&-2(\sigma_{12}+\sigma_3)(2(\nabla\cdot u)\nabla^T u+\mathrm{trace}((\nabla u)^2)I)\\
&-2(\sigma_2-\sigma_3)((\nabla u)^2+\nabla u\nabla^Tu+\nabla^Tu\nabla u)+2\sigma_3(\nabla^T u)^2+\mathrm{l.o.t.}.
\end{split}
\]
See also \cite{sideris1996null,agemi2000global} for the calculation. We remark here that \cite{sideris1996null} started with the left strain tensor, in contrast with the right strain tensor adopted here. But one will end up with exactly the same stress tensor in either way.

Denote
\[
\lambda=4(\sigma_{11}+\sigma_2),\quad \mu=-2\sigma_2,
\]
which are known as the Lam\'e parameters, and
\[
\begin{split}
&\mathscr{A}=8\sigma_3,\\
&\mathscr{B}=-4(\sigma_{12}+\sigma_3),\\
&\mathscr{C}=4(\sigma_{111}+3\sigma_{12}+\sigma_3).
\end{split}
\]
We impose the condition (which is called the ``\textit{very strong ellipticity condition}" in various literature)
\begin{equation}\label{ellipticitycondition}
\mu>0,\quad 3\lambda+2\mu>0.
\end{equation}
The stress tensor is then of the form
\[
\begin{split}
\sigma(u)=&\lambda\mathrm{trace}(\varepsilon)I+2\mu\varepsilon+\lambda(\nabla\cdot u)\nabla u+2\mu\nabla u\widetilde{\varepsilon}\\
&+\mathscr{A}\tilde{\varepsilon}^2+\mathscr{B}(2(\nabla\cdot u)\widetilde{\varepsilon}+\mathrm{trace}(\tilde{\varepsilon}^2)I)+\mathscr{C}(\nabla\cdot u)^2I+\mathrm{l.o.t.},
\end{split}
\]
where 
\[
\widetilde{\varepsilon}=\frac{1}{2}(\nabla u+\nabla^Tu)
\]
is the linearized strain tensor. For the rest of the paper we ignore the lower order terms in $\sigma$. The components of $\sigma$ are
\begin{equation}\label{stressform}
\begin{split}
\sigma_{ij}=&\lambda \varepsilon_{mm}\delta_{ij}+\lambda\widetilde{\varepsilon}_{mm}\frac{\partial u_i}{\partial x_j}+2\mu\left(\varepsilon_{ij}+\widetilde{\varepsilon}_{jn}\frac{\partial u_i}{\partial x_n}\right)\\
&+ \mathscr{A}\widetilde{\varepsilon}_{in}\widetilde{\varepsilon}_{jn}+\mathscr{B}(2\widetilde{\varepsilon}_{nn}\widetilde{\varepsilon}_{ij}+\widetilde{\varepsilon}_{mn}\widetilde{\varepsilon}_{mn}\delta_{ij})+\mathscr{C}\widetilde{\varepsilon}_{mm}\widetilde{\varepsilon}_{nn}\delta_{ij},
\end{split}
\end{equation}
where we have used Einstein's convention that all repeated indices are summed. This model is widely used in physical literature, cf., for example, \cite{landau1960theory,gol1961interaction,de2003finite}.
For mathematical study of this equation, we refer to \cite{sideris1996null,sideris2000nonresonance,agemi2000global}. \\

Consider the initial boundary value problem
\begin{equation}\label{elastic_eq}
\begin{cases}
\rho\partial^2_tu-\nabla\cdot \sigma(u)=0,\quad &(t,x)\in (0,T)\times\Omega,\\
u(t,x)=f(t,x),\quad &(t,x)\in (0,T)\times\partial \Omega,\\
u(0,x)=\frac{\partial}{\partial t}u(0,x)=0,\quad &x\in \Omega.
\end{cases}
\end{equation}
We introduce the displacement-to-traction map
\[
\Lambda:f\mapsto\nu\cdot\sigma(u)\vert_{(0,T)\times\partial\Omega},
\]
where $u$ is the solution to \eqref{elastic_eq}. It is proved in \cite{de2018nonlinear} that, for an arbitrary $m\geq 5$, $\Lambda$ is well defined as a map
\[
C_c^m((0,T)\times\partial\Omega)\rightarrow \bigcap_{k=0}^mC^k([0,T];W^{m-k,2}(\Omega)),
\]
where $\|f\|_{C^m}\leq \varepsilon_0$ with some small $\varepsilon_0>0$ (depending on $m,T$).
In this paper we will investigate the inverse problem of determining $\lambda,\mu,\rho,\mathscr{A},\mathscr{B},\mathscr{C}\in C^\infty(\overline{\Omega})$ from $\Lambda$.\\

Replacing $\sigma$ with the linearized stress tensor
\[
\widetilde{\sigma}(u)=\lambda\mathrm{trace}(\widetilde{\varepsilon})I+2\mu\widetilde{\varepsilon}=\lambda(\nabla\cdot u)I+\mu(\nabla u+\nabla^Tu),
\]
that is, discarding the second order terms in $u$,
one ends up with the classical linear elastic wave equation in isotropic medium
\begin{equation}\label{elastic_lineareq}
\begin{cases}
\rho\partial_t^2u-\nabla\cdot \widetilde{\sigma}(u)=0,\quad &(t,x)\in (0,T)\times\Omega,\\
u(t,x)=f(t,x),\quad &(t,x)\in (0,T)\times\partial \Omega,\\
u(0,x)=\frac{\partial}{\partial t}u(0,x)=0,\quad &x\in \Omega.
\end{cases}
\end{equation}

The linearized displacement-to-traction (Dirichlet-to-Neumann) map is defined as
\[
\Lambda^\mathrm{lin}:f\mapsto \nu\cdot\widetilde{\sigma}(u)\vert_{(0,T)\times\partial\Omega},
\]
where $u$ is the solution to \eqref{elastic_lineareq}.

For the linear elastic wave equation, there are two wavespeeds, namely, $P$-wavespeed $c_P$ and $S$-wavespeed $c_S$, related to the material parameters in the following way
\[
c_P=\sqrt{\frac{\lambda+2\mu}{\rho}},\quad c_S=\sqrt{\frac{\mu}{\rho}}.
\]
By \eqref{ellipticitycondition}, we have $c_P>c_S$.
Each wavespeed induces a Riemannian metric on $\Omega$
\[
g_\bullet=c_\bullet^{-2}\mathrm{d}s^2,
\]
with $\bullet=P/S$ and $\mathrm{d}s^2$ is the Euclidean metric. 

Let us review some basic notations in Riemannian geometry here. Let $(M,g)$ be a compact, connected and oriented Riemannian manifold with smooth boundary $\partial M$. Denote 
\[
SM=\{(x,v)\vert x\in M, v\in T_xM,|v|_g=1\}
\]
 to be the unit sphere bundle of $M$. 
Given $(x,v)\in SM$, the geodesic $\gamma_{x,v}$ is the unique geodesic determined by $(x,v)$ such that $\gamma_{x,v}(0)=x$ and $\dot{\gamma}_{x,v}(0)=v$. Denote
\[
\tau(x,v)=\inf\{t\vert \gamma_{x,v}(t)\in\partial M\}\in [0,\infty]
\]
to be the exit time when the geodesic $\gamma_{x,v}$ exits $M$. A Riemannian manifold $(M,g)$  is called \textit{non-trapping} if $\tau(x,v)<+\infty$ for all $(x,v)\in SM$. Denote then
\[
\mathrm{diam}(M,g)=\sup\{\tau(x,v)\vert (x,v)\in SM\}.
\]
One can also use the same notations for $(x,v)\in T^*M$ by taking a musical diffeomorphism of $v$. \\

The main result of this paper is as follows.
\begin{theorem}\label{maintheorem}
Assume $(\Omega,g_\bullet)$ is non-trapping and $\partial\Omega$ is convex with respect to $g_\bullet$ for either $\bullet=P,S$ and $T>2\max\{\mathrm{diam}(\Omega,g_P),\mathrm{diam}(\Omega,g_S)\}$. Suppose in addition $\lambda+3\mu+\mathscr{A}+2\mathscr{B}\neq 0$ in $\overline{\Omega}$. Then $\Lambda$ uniquely determines $\lambda,\mu,\rho,\mathscr{A},\mathscr{B}$ in $\overline{\Omega}$. Moreover, $\Lambda$ uniquely determines $\mathscr{C}$ in the set $\overline{\{\nabla(\mu^2/\rho)\neq 0\}}$.
\end{theorem}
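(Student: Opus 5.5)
The plan is to use the standard higher-order linearization of the nonlinear displacement-to-traction map. First recover the linear parameters from $\Lambda^{\mathrm{lin}}$, then recover $\mathscr{A},\mathscr{B},\mathscr{C}$ from the second-order linearization by Gaussian beam interactions.

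\emph{Step 1: first-order linearization.} Take $f=\epsilon_1 f_1+\epsilon_2 f_2$ and write $u=\epsilon_1u_1+\epsilon_2u_2+\epsilon_1\epsilon_2 w+O(\epsilon^3)$. Then $\partial_{\epsilon_j}\Lambda|_{\epsilon=0}=\Lambda^{\mathrm{lin}}$, and $\Lambda^{\mathrm{lin}}$ is known. Since $T>2\max\mathrm{diam}$ and the wavespeeds are non-trapping with convex boundary, the known results for the linear isotropic elastic DN map apply. Specifically, boundary determination together with lens/boundary-rigidity type arguments (Rachele; Stefanov--Uhlmann--Vasy for the non-trapping, convex-foliation setting, and the recent results recovering $\lambda,\mu,\rho$ from the elastic DN map) give $c_P,c_S$. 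We would then obtain $\rho$ by a further step, e.g.\ from the principal symbol/transport equations along $S$-beams. This yields $\lambda,\mu,\rho$ in $\overline\Omega$. I will cite this as known.

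\emph{Step 2: second-order linearization and integral identity.} The mixed derivative $\partial_{\epsilon_1}\partial_{\epsilon_2}\Lambda$ determines $w$'s traction, where $w$ solves the linear equation with source $\nabla\cdot\sigma^{(2)}(u_1,u_2)$. Here $\sigma^{(2)}$ is the symmetric bilinear quadratic part read off from \eqref{stressform}. Pairing with a third linear solution $u_0$ of the backward problem and integrating by parts gives
\[
\int_0^T\!\!\int_\Omega \sigma^{(2)}(u_1,u_2):\nabla u_0\,dx\,dt
\]
as known data. This is a trilinear form in $\nabla u_0,\nabla u_1,\nabla u_2$ with coefficients that are linear combinations of $\lambda,\mu,\mathscr{A},\mathscr{B},\mathscr{C}$. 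The first two are already known.

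\emph{Step 3: Gaussian beams.} Since the metrics $g_P,g_S$ are non-trapping, I would construct Gaussian beam solutions $u_j$ of $P$ or $S$ type concentrated along geodesics through a chosen point $x_0$. The three phases are chosen so that the resonance condition holds at $x_0$. One natural choice is two $S$-beams and one $P$-beam; another is three $P$-beams with collinear covectors, which mirrors the treatment in scalar wave problems. Stationary phase as the beam parameter tends to infinity localizes the integral to $x_0$. The leading coefficient is then an explicit polynomial in the polarizations and covectors, multiplied by a combination of the unknown parameters at $x_0$.

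Varying the configuration should produce independent combinations. One example is the $P$--$S$--$S$ interaction, which is known in the physics literature to involve $\lambda+3\mu+\mathscr{A}+2\mathscr{B}$ or similar. This is where the hypothesis $\lambda+3\mu+\mathscr{A}+2\mathscr{B}\neq0$ should enter, since it makes some amplitude nondegenerate. Together these determine $\mathscr{A}$ and $\mathscr{B}$ pointwise.

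\emph{Step 4: the parameter $\mathscr{C}$.} I expect $\mathscr{C}$ to appear only through $(\nabla\cdot u)^2$, i.e.\ in $P$--$P$--$P$ interactions. At leading order that combination is likely degenerate, and $\mathscr{C}$ may only be seen at subprincipal order, where derivatives of the beam amplitudes enter. The amplitude transport equations involve $\nabla(\mu^2/\rho)$ or a related quantity. This suggests recovering $\mathscr{C}$ wherever that gradient is nonzero and extending to the closure by continuity.

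This subprincipal asymptotic computation is the main obstacle. It requires carrying the Gaussian beam expansion one order further and isolating the term multiplying $\mathscr{C}$ from the already known terms. I would first try the $P$--$P$--$P$ configuration with nearly parallel covectors and expand to next order, checking that the coefficient of $\mathscr{C}$ is a nonzero multiple of $\nabla(\mu^2/\rho)\cdot\xi$ for suitable $\xi$.
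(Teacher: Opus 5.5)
\textbf{The gap is Step 1.} Under the hypotheses of the theorem (non-trapping plus convex boundary), no known result shows that $\Lambda^{\mathrm{lin}}$ determines $c_P$, $c_S$ or $\rho$. Rachele's results need simplicity, with an extra curvature condition for $\rho$. The Stefanov--Uhlmann--Vasy and Zhai results need the foliation condition. Both conditions are strictly stronger than non-trapping, and uniqueness of the conformal factor from lens data is not available under non-trapping alone. So you cannot cite this step. It is precisely what the theorem is about.

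The paper takes only three things from $\Lambda^{\mathrm{lin}}$: the boundary jets of $\lambda,\mu,\rho$, the lens relations, and the ability to remove boundary reflections of Gaussian beams by blind scattering control. The wavespeeds come from the \emph{nonlinear} data.
\begin{itemize}
\item In $\Lambda''$ the paper sends two $P$-beams and one $S$-beam that meet at a point $q$ with resonant covectors. The leading coefficient is $(\lambda+3\mu+\mathscr{A}+2\mathscr{B})(q)\cos\beta\,(ab\cos\beta+\cos\beta+a+b)$.
\item Under the hypothesis, nonvanishing of this interaction detects (generically) when two $P$-geodesics issued from the boundary intersect. This gives an abridged broken scattering relation for $g_P$.
\item A Kurylev--Lassas--Uhlmann-type argument turns that relation into the boundary distance functions, hence $g_P$ up to a boundary-fixing diffeomorphism, hence $c_P$ since $g_P$ is conformal to the Euclidean metric.
\item With $c_P$ known, the points where the $P$-$S$-$P$ interaction with a fixed $S$-beam is nonzero trace out that $S$-geodesic, which gives $c_S$.
\item Only then do $P$-$S$-$S$ interactions give $\rho$, $\mathscr{A}$ and $\mathscr{B}$. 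In particular $\rho$ is not obtained from the linear map.
\end{itemize}

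In your scheme the hypothesis $\lambda+3\mu+\mathscr{A}+2\mathscr{B}\neq 0$ would be superfluous. Once $c_P$ and $c_S$ are known, the $P$-$S$-$S$ computation yields $\rho^{-3/2}(\lambda+\mathscr{B})$, $\rho^{-3/2}(2\mu+\mathscr{A}/2)$ and $\rho^{-3/2}(\mu+\mathscr{A}/2+\mathscr{B})$. These determine $\rho$, $\mathscr{A}$ and $\mathscr{B}$ without the hypothesis. That it would be unused is a sign that the hypothesis belongs to the wavespeed step you skipped.

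\textbf{Step 4 also needs repair.} Suppose three lightlike covectors of the same cone satisfy $\kappa_1\zeta^{(1)}+\kappa_2\zeta^{(2)}+\kappa_0\zeta^{(0)}=0$ with all $\kappa_j\neq 0$. Then they must be parallel. So three resonant $P$-beams run along one and the same null geodesic instead of meeting at a point. Stationary phase then gives only a weighted geodesic ray transform of $\mathscr{C}$, which cannot be inverted under non-trapping alone.

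The paper instead reuses the $P$-$S$-$P$ configuration. There $\mathscr{C}$ multiplies $\nabla\cdot u^{(2)}$, which for an $S$-beam is $O(1)$ rather than $O(\varrho)$. Its leading term comes from the subprincipal amplitude and equals $-\frac{1}{\lambda+\mu}\langle 2\nabla\mu-\mu\rho^{-1}\nabla\rho,\mathbf{a}_0\rangle e^{\mathrm{i}\varrho\varphi}$. Since $2\nabla\mu-\mu\rho^{-1}\nabla\rho=\frac{\rho}{\mu}\nabla(\mu^2/\rho)$, this is where the condition $\nabla(\mu^2/\rho)\neq 0$ enters. The remaining terms of the integral identity are now computable and can be subtracted. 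What is left is $\mathscr{C}(q)$ times a factor that, after perturbing the configuration if needed, is nonzero exactly when $\nabla(\mu^2/\rho)(q)\neq 0$.
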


Note that the parameters $\lambda,\mu,\rho$ already appear in the linear elastic wave equation. The recovery of these three parameters from the linearized Dirichlet-to-Neumann $\Lambda^\mathrm{lin}$ has been considered in \cite{rachele2000inverse,rachele2003uniqueness,stefanov2017local,bhattacharyya2018local,zhai2026determination}. However those results require stronger geometrical conditions. Generally speaking, one can reduce this inverse boundary value problem to certain geometrical inverse problems such as \textit{lens rigidity} and \textit{tensor tomography} problems, for which \textit{simplicity} or \textit{foliation condition} is usually assumed \cite{michel1981rigidite,paternain2015invariant,stefanov2016boundary,stefanov2018inverting,uhlmann2024invertibility2}. It is well known that either of these two conditions implies non-trapping. To be more precise, it is proved \cite{stefanov2017local,zhai2026determination} that all the three parameters can be determined by $\Lambda^\mathrm{lin}$ under the foliation condition. Under the simplicity condition, the uniqueness of $\frac{\lambda}{\rho}$ and $\frac{\mu}{\rho}$ is proved \cite{rachele2000inverse}, while the uniqueness of $\rho$ requires some further curvature condition \cite{rachele2003uniqueness}. In a recent work \cite{yi2026leakage}, the author proves the unique determination of one wavespeed from a so-called \textit{solution map} if the other one is a priori known, under the non-trapping condition and the additional condition $\nabla\rho\neq 0$.

On the other hand, the inverse problem for the nonlinear elastic wave equation has been considered in \cite{de2018nonlinear,uhlmann2021inverse,uhlmann2024determination}. However, in these works the wavespeeds are still recovered from the linearized DtN map, thus requiring the more restrictive geometrical conditions. When the simplicity or foliation condition is satisfied, all the six parameters $\lambda,\mu,\rho,\mathscr{A},\mathscr{B},\mathscr{C}$ are proved to be uniquely determined  by $\Lambda$ \cite{uhlmann2024determination}. One of the major endeavors of this paper is to use the nonlinearity to determine the wavespeeds under the less restrictive non-trapping condition. The uniqueness of other parameters can also be established. The determination of $\rho,\mathscr{A},\mathscr{B}$ is not so different than the approach in \cite{uhlmann2024determination}. The determination of parameter $\mathscr{C}$ is however a little bit trickier. Under stronger geometrical conditions, the uniqueness of $\mathscr{C}$ is established in  \cite{uhlmann2021inverse}. Under the mere non-trapping condition, we have one more restriction as stated in Theorem \ref{maintheorem}. But physically this restriction is not so troubling since for homogeneous medium all parameters should be simultaneously constant.

Since the work \cite{kurylev2018inverse}, rapid progress has been witnessed on inverse problems for nonlinear hyperbolic equations. Among the massive recent literature, we refer to \cite{lassas2018inverse,feizmohammadi2021inverse,hintz2022dirichlet,chen2021detection,kurylev2022inverse,uhlmann2020determination,balehowsky2022inverse,hintz2024inverse,sa2024recovery,alexakis2024inverse,nursultanov2025determining,chen2025inverse,chen2025stable,uhlmann2026inverse} and the references therein. In most of these works, nonlinearity is used in an essential way to tackle the inverse problems.  Waves have rich behaviors due to their nonlinear interactions, and thus carry plentiful information that can be beneficial for inverse problems. The most notable property for elastic waves, in contrast with other models considered extensively, is that there are two types of waves, namely $P$- and $S$- waves, traveling in different speeds. This property will be heavily utilized in our argument.\\

The organization of the paper is as follows. In Section \ref{secondorderlinearization}, we carry out the second order linearization and obtain an integral identity needed for the inverse problem. In Section \ref{gaussian}, we detail the construction of Gaussian beam solutions for the linear elastic wave equation. In Section \ref{gaussianwithoutreflection}, we carefully analyze the reflection of elastic Gaussian beams on the boundary subject to the Dirichlet boundary condition, and outline a scattering control scheme to remove the reflection. Then, as some preparation, we use the Gaussian beams (without reflection) to prove that the linear DtN map gives us the lens relations for both $P$- and $S$- waves in Section \ref{linearequationresult}. Before we prove the uniqueness of the wavespeeds, we first deal with an associated geometrical inverse problem in Section \ref{brokenscattering}. The geometrical inverse problem is a variant of the broken scattering rigidity problem. The recovery of the $P$-wavespeed can be reduced to this geometrical inverse problem, as will be shown in Section \ref{determinationPwavespeed}. Once the reduction is done, one can recover the $P$-wavespeed by applying the result for the related geometrical inverse problem. With the already determined $P$-wavespeed, we then determine the $S$-wavespeed in Section \ref{determinationSwavespeed}, via recovering part of an $S$-wave trajectory in spacetime. In Section \ref{uniquenessnonlinearparameters}, we establish the uniqueness of the density $\rho$ and two nonlinear parameters $\mathscr{A},\mathscr{B}$, the proof of which is not so different than that in \cite{uhlmann2024determination}. Finally in Section \ref{recoveryC}, we consider the recovery of $\mathscr{C}$.

\section{Second-order linearization}\label{secondorderlinearization}
Linearization with multiple small parameters has been proved to be quite useful for solving inverse problems for nonlinear partial differential equations, see, for example, \cite{kurylev2018inverse,lassas2018inverse,hintz2022inverse}. We aim to recover coefficients of up to second order nonlinear terms, so we will only use first and second order linearizations of the displacement-to-traction map to be physically more meaningful. Higher order linearizations would involve higher order nonlinear terms in the equation itself, which have been ignored here.
In this section we summarize the second order linearization of $\Lambda$, which has already been carried out in \cite{uhlmann2021inverse,uhlmann2024determination}. \\

Take $\epsilon_1,\epsilon_2\in\mathbb{R}$ small enough, and let $u_\epsilon$ be the solution to the initial boundary value problem \eqref{elastic_eq} with boundary value $f=\epsilon_1f^{(1)}+\epsilon_2f^{(2)}$. Here $f^{(1)},f^{(2)}$ vanish near $\{t=0\}$. Then $u_\epsilon$ has the asymptotic expansion
\[
u_\epsilon=\epsilon_1u^{(1)}+\epsilon_2u^{(2)}+\frac{1}{2}\epsilon_1^2u^{(11)}+\frac{1}{2}\epsilon_2^2u^{(22)}+\epsilon_1\epsilon_2u^{(12)}+\text{higher order terms in }\epsilon_1,\epsilon_2.
\]
Here $u^{(j)},j=1,2$ is the solution to the linearized equation
\begin{equation}\label{linear_eqj}
\begin{cases}
\rho\frac{\partial^2u^{(j)}}{\partial t^2}-\nabla\cdot \widetilde{\sigma}(u^{(j)})=0,\quad &(t,x)\in (0,T)\times\Omega,\\
u^{(j)}(t,x)=f^{(j)}(t,x),\quad &(t,x)\in (0,T)\times\partial \Omega,\\
u^{(j)}(0,x)=\frac{\partial}{\partial t}u^{(j)}(0,x)=0,\quad &x\in \Omega,
\end{cases}
\end{equation}
and $u^{(12)}$ is the solution to the equation
\begin{equation}\label{linear_eq12}
\begin{cases}
\rho\frac{\partial^2u^{(12)}}{\partial t^2}-\nabla\cdot \widetilde{\sigma}(u^{(12)})=\nabla\cdot G(u^{(1)},u^{(2)}),\quad &(t,x)\in (0,T)\times\Omega,\\
u^{(12)}(t,x)=0,\quad &(t,x)\in (0,T)\times\partial \Omega,\\
u^{(12)}(0,x)=\frac{\partial}{\partial t}u^{(12)}(0,x)=0,\quad &x\in \Omega,
\end{cases}
\end{equation}
where $G_{ij}$ comes from the second order term of $u$ in $\sigma(u)$, having the form
\begin{align}\label{G_form}
G_{ij}(u^{(1)},u^{(2)})=&(\lambda+\mathscr{B})\frac{\partial u^{(1)}_m}{\partial x_n}\frac{\partial u^{(2)}_m}{\partial x_n}\delta_{ij}+2\mathscr{C}\frac{\partial u^{(1)}_m}{\partial x_m}\frac{\partial u^{(2)}_n}{\partial x_n}\delta_{ij}+\mathscr{B}\frac{\partial u_m^{(1)}}{\partial x_n}\frac{\partial u_n^{(2)}}{\partial x_m}\delta_{ij}\nonumber\\
&+\mathscr{B}\left(\frac{\partial u^{(1)}_m}{\partial x_m}\frac{\partial u^{(2)}_j}{\partial x_i}+\frac{\partial u^{(2)}_m}{\partial x_m}\frac{\partial u^{(1)}_j}{\partial x_i}\right)+\frac{\mathscr{A}}{4}\left(\frac{\partial u^{(1)}_j}{\partial x_m}\frac{\partial u^{(2)}_m}{\partial x_i}+\frac{\partial u^{(2)}_j}{\partial x_m}\frac{\partial u^{(1)}_m}{\partial x_i}\right)\nonumber\\
&+(\lambda+\mathscr{B})\left(\frac{\partial u^{(1)}_m}{\partial x_m}\frac{\partial u^{(2)}_i}{\partial x_j}+\frac{\partial u^{(2)}_m}{\partial x_m}\frac{\partial u^{(1)}_i}{\partial x_j}\right)\\
&+(\mu+\frac{\mathscr{A}}{4})\left(\frac{\partial u^{(1)}_m}{\partial x_i}\frac{\partial u^{(2)}_m}{\partial x_j}+\frac{\partial u^{(2)}_m}{\partial x_i}\frac{\partial u^{(1)}_m}{\partial x_j}+\frac{\partial u^{(1)}_i}{\partial x_m}\frac{\partial u^{(2)}_j}{\partial x_m}+\frac{\partial u^{(2)}_i}{\partial x_m}\frac{\partial u^{(1)}_j}{\partial x_m}\right.\nonumber\\
&\quad\quad\left.+\frac{\partial u^{(1)}_i}{\partial x_m}\frac{\partial u^{(2)}_m}{\partial x_j}+\frac{\partial u^{(2)}_i}{\partial x_m}\frac{\partial u^{(1)}_m}{\partial x_j}\right).\nonumber
\end{align}

We define
 the bilinear map
\[
\begin{split}
\Lambda^{''}:(f^{(1)},f^{(2)})&\mapsto \left(\nu\cdot \widetilde{\sigma}(u^{(12)})+\nu\cdot G(u^{(1)},u^{(2)})\right)\Big\vert_{(0,T)\times\partial\Omega}.\\
\end{split}
\]
One can verify that formally 
\[
\begin{split}
\Lambda^\mathrm{lin}(f^{(j)})&=\frac{\partial}{\partial\epsilon_j}\Lambda(\epsilon_jf^{(j)})\vert_{\epsilon_j=0},\\
\Lambda^{''}(f^{(1)},f^{(2)})&=\frac{\partial^2}{\partial\epsilon_1\partial\epsilon_2}\Lambda(\epsilon_1f^{(1)}+\epsilon_2f^{(2)})\vert_{\epsilon_1=\epsilon_2=0}.
\end{split}
\]
The above linearizations can be rigorously justified using standard theory of hyperbolic equations.
Therefore, we can recover $\Lambda^\mathrm{lin},\Lambda''$ from $\Lambda$. Recall that $\Lambda^\mathrm{lin}$ is the DtN map for the linear elastic wave equation. One has the mapping properties
\[
\Lambda^\mathrm{lin}:C_c^\infty((0,T)\times\partial\Omega)\rightarrow C^\infty((0,T)\times\partial\Omega),
\]
\[
\Lambda'':C_c^\infty((0,T)\times\partial\Omega)\times C_c^\infty((0,T)\times\partial\Omega)\rightarrow C^\infty((0,T)\times\partial\Omega),
\]
also by standard hyperbolic theory.

Assume that $u^{(0)}$ solves the initial boundary value problem for the backward elastic wave equation
\begin{equation}\label{backward_eq}
\begin{cases}
\rho\frac{\partial^2}{\partial t^2}u^{(0)}-\nabla\cdot \widetilde{\sigma}(x,u^{(0)})=0,\quad &(t,x)\in (0,T)\times\Omega,\\
u(t,x)=f^{(0)}(t,x),\quad &(t,x)\in (0,T)\times\partial \Omega,\\
u^{(0)}(T,x)=\frac{\partial}{\partial t}u^{(0)}(T,x)=0,\quad &x\in \Omega.
\end{cases}
\end{equation}
Using integration by parts, we have (cf. \cite{uhlmann2021inverse})
\begin{equation}\label{integralform}
\int_0^T\int_{\partial \Omega}\Lambda^{''}(f^{(1)},f^{(2)})\cdot f^{(0)}\mathrm{d}S\mathrm{d}t\\
=\int_0^T\int_{\Omega}\mathcal{G}(\nabla u^{(1)},\nabla u^{(2)},\nabla u^{(0)})\mathrm{d}x\mathrm{d}t,
\end{equation}
where
\begin{align}\label{integrand_G}
&\mathcal{G}(\nabla u^{(1)},\nabla u^{(2)},\nabla u^{(0)})\nonumber\\
=&(\lambda+\mathscr{B})(\nabla u^{(1)}:\nabla u^{(2)})(\nabla\cdot u^{(0)})+2\mathscr{C} (\nabla\cdot u^{(1)})(\nabla\cdot u^{(2)})(\nabla\cdot u^{(0)})\nonumber\\
&+\mathscr{B}(\nabla u^{(1)}:\nabla^T u^{(2)})(\nabla\cdot u^{(0)})\nonumber\\
&+\mathscr{B}\left( (\nabla\cdot u^{(1)})(\nabla u^{(2)}:\nabla^T u^{(0)})+(\nabla\cdot u^{(2)})(\nabla u^{(1)}:\nabla^T u^{(0)})\right)\nonumber\\
&+\frac{\mathscr{A}}{4}\left(\frac{\partial u^{(1)}_j}{\partial x_m}\frac{\partial u^{(2)}_m}{\partial x_i}+\frac{\partial u^{(2)}_j}{\partial x_m}\frac{\partial u^{(1)}_m}{\partial x_i}\right)\frac{\partial u^{(0)}_i}{\partial x_j}\\
&+(\lambda+\mathscr{B})\left((\nabla\cdot u^{(1)})(\nabla u^{(2)}:\nabla u^{(0)})+(\nabla\cdot u^{(2)})(\nabla u^{(1)}:\nabla u^{(0)})\right)\nonumber\\
&+(\mu+\frac{\mathscr{A}}{4})\Bigg(\frac{\partial u^{(1)}_m}{\partial x_i}\frac{\partial u^{(2)}_m}{\partial x_j}+\frac{\partial u^{(2)}_m}{\partial x_i}\frac{\partial u^{(1)}_m}{\partial x_j}+\frac{\partial u^{(1)}_i}{\partial x_m}\frac{\partial u^{(2)}_j}{\partial x_m}+\frac{\partial u^{(2)}_i}{\partial x_m}\frac{\partial u^{(1)}_j}{\partial x_m}\nonumber\\
&\quad\quad\quad\quad\quad\quad\quad\quad\quad+\frac{\partial u^{(1)}_i}{\partial x_m}\frac{\partial u^{(2)}_m}{\partial x_j}+\frac{\partial u^{(2)}_i}{\partial x_m}\frac{\partial u^{(1)}_m}{\partial x_j}\Bigg)\frac{\partial u^{(0)}_i}{\partial x_j}.\nonumber
\end{align}
Here we have used the notation $A:B=\sum_{i,j}A_{ij}B_{ij}$.

For the study of inverse problems,
we will construct a family of special solutions, called Gaussian beams, substitute into the integral equation \eqref{backward_eq}, and then extract useful information about the parameters we want to recover.

\section{Elastic Gaussian beams}\label{gaussian}
Denote
\[
Pu:=\rho\partial_t^2u-\nabla\cdot\widetilde{\sigma}(u).
\]
In this section, we construct Gaussian beam solutions of the form
\[
u(t,x)\sim\sum_{k=0}^{N+1} \varrho^{-k}\mathbf{a}_k(t,x)e^{\mathrm{i}\varrho\varphi}
\]
to the linear elastic wave equation 
\[
Pu=\rho\partial_t^2u-\nabla (\lambda\nabla \cdot u)-\nabla\cdot(\mu (\nabla u+\nabla^Tu))=0.
\]
 Gaussian beam solutions are asymptotic solutions that concentrate near a null geodesic $\vartheta$ in the large $\varrho$ limit. Here $\varphi$ is a complex-valued phase function that vanishes on $\vartheta$ with $\Im\varphi(t,x)\geq c_0d((t,x),\vartheta)^2$ for $\varrho>0$, and $\Im\varphi(t,x)\leq -c_0d((t,x),\vartheta)^2$ for $\varrho<0$, where $c_0>0$. So the solutions exhibit a Gaussian decay away from the null geodesic. Such solutions have been widely used for the study of inverse problems \cite{katchalov1998multidimensional,bao2014sensitivity, belishev1996boundary, dos2016calderon,feizmohammadi2019recovery,feizmohammadi2019inverse,lassas2021inverse,feizmohammadi2021inverse}.

Gaussian beam solutions for elastic waves have been constructed in \cite{uhlmann2021inverse,uhlmann2024determination}.
We will carry out the construction in a more coordinate-free way. In other words, we will resort to the Fermi coordinates (introduced below) only when necessary.\\

To begin with, we calculate
\begin{align*}
&\partial_t^2(\mathbf{a}e^{\mathrm{i}\varrho\varphi})=e^{\mathrm{i}\varrho\varphi}\left((\mathrm{i}\varrho)^2(\partial_t\varphi)^2\mathbf{a}+2\mathrm{i}\varrho\partial_t\varphi\partial_t\mathbf{a}+\mathrm{i}\varrho\partial_t^2\varphi\mathbf{a}+\partial_t^2\mathbf{a}\right),\\
&\nabla\cdot(\mathbf{a}e^{\mathrm{i}\varrho\varphi})=e^{\mathrm{i}\varrho\varphi}\left(\mathrm{i}\varrho\nabla\varphi\cdot\mathbf{a}+\nabla\cdot\mathbf{a}\right),\\
&\nabla(\lambda \nabla\cdot(\mathbf{a}e^{\mathrm{i}\varrho\varphi}))=e^{\mathrm{i}\varrho\varphi}\Big((\mathrm{i}\varrho)^2\lambda\nabla\varphi(\nabla\varphi\cdot\mathbf{a})+(\mathrm{i}\varrho)\lambda\nabla\varphi\nabla\cdot\mathbf{a}+\mathrm{i}\varrho\nabla\lambda(\nabla\varphi\cdot\mathbf{a})+\mathrm{i}\varrho\lambda\nabla(\nabla\varphi\cdot\mathbf{a})\\
&\quad\quad\quad\quad\quad\quad\quad\quad\quad\quad\quad\quad+\nabla\lambda\nabla\cdot\mathbf{a}+\lambda\nabla\nabla\cdot\mathbf{a}\Big),\\
&\nabla (\mathbf{a}e^{\mathrm{i}\varrho\varphi})+\nabla^T(\mathbf{a}e^{\mathrm{i}\varrho\varphi})=e^{\mathrm{i}\varrho\varphi}(\mathrm{i}\varrho\nabla\varphi\otimes\mathbf{a}+\mathrm{i}\varrho\mathbf{a}\otimes\nabla\varphi+\nabla\mathbf{a}+\nabla^T\mathbf{a}),\\
&\nabla\cdot(\mu(\nabla (\mathbf{a}e^{\mathrm{i}\varrho\varphi})+\nabla^T(\mathbf{a}e^{\mathrm{i}\varrho\varphi})))=e^{\mathrm{i}\varrho\varphi}\Big((\mathrm{i}\varrho)^2\mu(\nabla\varphi\cdot\mathbf{a})\nabla\varphi+(\mathrm{i}\varrho)^2\mu|\nabla\varphi|^2\mathbf{a}+\mathrm{i}\varrho\mu\nabla\varphi\cdot(\nabla\mathbf{a}+\nabla^T\mathbf{a})\\
&\quad\quad\quad\quad\quad\quad\quad\quad\quad\quad\quad\quad\quad\quad\quad\quad\quad+\mathrm{i}\varrho\mu(\nabla^2\varphi\mathbf{a}+\nabla\varphi\cdot\nabla\mathbf{a}+\Delta\varphi\mathbf{a}+\nabla\varphi\nabla\cdot\mathbf{a})\\
&\quad\quad\quad\quad\quad\quad\quad\quad\quad\quad\quad\quad\quad\quad\quad\quad\quad+\mathrm{i}\varrho((\nabla\mu\cdot\nabla\varphi)\mathbf{a}+(\nabla\mu\cdot\mathbf{a})\nabla\varphi)+\nabla\cdot(\mu(\nabla\mathbf{a}+\nabla^T\mathbf{a}))\Big).
\end{align*}
Here $(\nabla\varphi\cdot\nabla\mathbf{a})_i=(\sum_{j}\partial_j\varphi\partial_ja_i)$.
Introduce the following notations
\[
E(x,\varphi)\mathbf{a}:=\rho\mathbf{a}(\partial_t\varphi)^2-(\lambda+\mu)(\nabla\varphi\cdot\mathbf{a})\nabla\varphi-\mu|\nabla\varphi|^2\mathbf{a},
\]
and
\[
\begin{split}
T(x,\varphi)\mathbf{a}:=&2\rho\partial_t\varphi\partial_t\mathbf{a}+\rho\partial_t^2\varphi\mathbf{a}-\lambda\nabla\varphi\nabla\cdot\mathbf{a}-\nabla\lambda(\nabla\varphi\cdot\mathbf{a})-\lambda\nabla(\nabla\varphi\cdot\mathbf{a})\\
&-\mu\nabla\varphi\cdot(\nabla\mathbf{a}+\nabla^T\mathbf{a})-\mu(\nabla^2\varphi\cdot\mathbf{a}+\nabla\varphi\cdot\nabla\mathbf{a}+\Delta\varphi\mathbf{a}+\nabla\varphi\nabla\cdot\mathbf{a})\\
&-(\nabla\mu\cdot\nabla\varphi)\mathbf{a}-(\nabla\mu\cdot\mathbf{a})\nabla\varphi.
\end{split}
\]
Then we can write
\[
\begin{split}
e^{-\mathrm{i}\varrho\varphi}P\left(\sum_{k=0}^{N+1} \mathbf{a}_k(t,x)e^{\mathrm{i}\varrho\varphi}\right)=&-\varrho^2 E(x,\varphi)\mathbf{a}_0+\varrho(\mathrm{i}T(x,\varphi)\mathbf{a}_0-E(x,\varphi)\mathbf{a}_1)\\
&+\sum_{k=1}^{N}\varrho^{1-k}\left(\mathrm{i}T(x,\varphi)\mathbf{a}_k-E(x,\varphi)\mathbf{a}_{k+1}+P\mathbf{a}_{k-1}\right)\\
&+\varrho^{-N}(\mathrm{i}T(x,\varphi)\mathbf{a}_{N+1}+P\mathbf{a}_{N})+\varrho^{-1-N}P\mathbf{a}_{N+1}.
\end{split}
\]
Denote for $j=2,1,0,-1,\cdots, -N+1$,
\[
\mathcal{I}_j=\mathrm{i}T(x,\varphi)\mathbf{a}_{1-j}-E(x,\varphi)\mathbf{a}_{2-j}+P\mathbf{a}_{-j},
\]
where $\mathbf{a}_{-2}=\mathbf{a}_{-1}=0$. Note that
\[
\mathcal{I}_2=-E(x,\varphi)\mathbf{a}_0,
\]
\[
\mathcal{I}_1=\mathrm{i}T(x,\varphi)\mathbf{a}_0-E(x,\varphi)\mathbf{a}_1.
\]
We first extend the parameters $\lambda,\mu,\rho$ smoothly to a slightly larger domain $\widetilde{\Omega}\supset\supset\Omega$ and construct asymptotic solutions to $Pu=0$ in $\mathbb{R}\times\widetilde{\Omega}$. For $\bullet=P/S$, consider $(\widetilde{\Omega},g_\bullet)$ as a Riemannian manifold, and then $(\mathbb{R}\times\widetilde{\Omega},-\mathrm{d}t^2+g_\bullet)$ is naturally a Lorentzian manifold. Assume $\gamma(t)$ is a geodesic on  $(\widetilde{\Omega},g_\bullet)$ parametrized by arclength, then $\vartheta(t,\gamma(t))$ is a null-geodesic on $(\mathbb{R}\times\widetilde{\Omega},-\mathrm{d}t^2+g_\bullet)$. Throughout the paper, we assume $(\Omega,g_\bullet)$ is non-trapping and $\partial \Omega$ is convex with respect to $g_\bullet$.\\

\subsection{Fermi coordinates} We introduce the so called local Fermi coordinates in a neighborhood of a null geodesic $\vartheta(t)=(t,\gamma(t))$, $t\in (t_-,t_+)$, in $(\widetilde{\Omega},g_\bullet)$. We refer to \cite{feizmohammadi2019timedependent,kurylev2022inverse} for more details.

Assume $\gamma(t_0)=x_0\in \widetilde{\Omega}$ where $t_0\in(t_-,t_+)$. Choose $\alpha_2,\alpha_3\in T_{x_0}\widetilde{\Omega}$ such that $\{\dot{\gamma}(t_0),\alpha_2,\alpha_3\}$ forms an orthonormal basis for $T_{x_0}\widetilde{\Omega}$. Let $s$ denote the arclength along $\gamma$ from $x_0$. We note here that $s$ can be positive or negative, and $(t_0+s,\gamma(t_0+s))=\vartheta(t_0+s)$. For $k=2,3$, let $e_k(s)\in T_{\gamma(t_0+s)}\widetilde{\Omega}$ be the parallel transport of the vector $\alpha_k$ along $\gamma$ to the point $\gamma(t_0+s)$.

Define the coordinate system $(y^0=t,y^1=s,y^2,y^3)$ through $\mathcal{F}_1:\mathbb{R}^{1+3}\rightarrow \mathbb{R}\times\widetilde{\Omega}$:
\[
\mathcal{F}_1(y^0=t,y^1=s,y^2,y^3)=(t,\exp_{\gamma(t_0+s)}\left(y^2e_2(s)+y^3e_3(s)\right)).
\]
Then the null-geodesic $\vartheta$ can be expressed as 
\[
\vartheta=\{t-s=t_0,y^2=y^3=0\}.
\]
We remark here that although $\gamma$ as Riemannian geodesic in $(\widetilde{\Omega},g_\bullet)$ can have self-intersections, $\vartheta$ as a null geodesic in $(\mathbb{R}\times\widetilde{\Omega},-\mathrm{d}t^2+g_\bullet)$ does not have self-intersections.
Notice that $(y^1=s,y^2,y^3)$ gives a coordinate system on $\widetilde{\Omega}$ in a neighborhood of $\gamma(t_0-\varepsilon,t_0+\varepsilon)$, for any $t_0\in(t_-,t_+)$ and $\varepsilon>0$ small enough, such that
\[
g_\bullet\vert_{\gamma}=\sum_{j=1}^3(\mathrm{d}y^j)^2,\quad\text{and}\quad\frac{\partial g_{\bullet,jk}}{\partial y^i}\Big\vert_\gamma=0,~1\leq i,j,k\leq 3.
\]
Under this coordinate system, the Euclidean metric $g_E:=\mathrm{d}s^2$ locally takes the form
\[
g_E=c_\bullet^2g_\bullet=\sum_{1\leq i,j\leq 3}c^2_\bullet g_{\bullet,ij}\mathrm{d}y^i\mathrm{d}y^j,
\]
and the Christoffel symbols for the Euclidean metric are given by
\begin{equation}\label{Christoffel}
\begin{split}
&\Gamma_{\alpha\beta}^1=-c_\bullet^{-1}\frac{\partial c_\bullet}{\partial s} g_{\bullet,\alpha\beta},\quad \Gamma_{1\alpha}^\beta=\delta^\alpha_\beta c_\bullet^{-1}\frac{\partial c_\bullet}{\partial s},\quad \Gamma_{1\alpha}^1=c_\bullet^{-1}\frac{\partial c_\bullet}{\partial y^\alpha},\\
&\Gamma_{11}^\alpha=-c_\bullet^{-1}g_\bullet^{\alpha\beta}\frac{\partial c_\bullet}{\partial y^\beta},\quad \Gamma_{11}^1=c_\bullet^{-1}\frac{\partial c_\bullet}{\partial s},
\end{split}
\end{equation}
where $\alpha,\beta\in \{2,3\}$.
Introduce the map $(z^0,z'):=(z^0,z^1,z^2,z^3)=\mathcal{F}_2(y^0=t,y^1=s,y^2,y^3)$, where
\[
z^0=\tau=\frac{1}{\sqrt{2}}(t-t_0+s),\quad z^1=r=\frac{1}{\sqrt{2}}(-t+t_0+s),\quad z^j=y^j,\, j=2,3.
\]
The Fermi coordinates $(z^0=\tau,z^1=r,z^2,z^3)$ near $\vartheta$ is given by $\mathcal{F}:\mathbb{R}^{1+3}\rightarrow \mathbb{R}\times\widetilde{\Omega}$, where $\mathcal{F}=\mathcal{F}_1\circ\mathcal{F}_2^{-1}$. Note that on $\vartheta$, $r=0$. Denote $\tau_\pm=\sqrt{2}(t_\pm-t_0)$.
Then on $\vartheta$ we have
\[
\overline{g}_\bullet\vert_\vartheta=2\mathrm{d}\tau\mathrm{d}r+\sum_{j=2}^3(\mathrm{d}z^j)^2\quad\text{and}\quad\frac{\partial \overline{g}_{\bullet,jk}}{\partial z^i}\Big\vert_\vartheta=0,~0\leq i,j,k\leq 3.
\]

\subsection{WKB approximation}
We construct Gaussian beam solutions $u_\varrho$ of the form
\[
u_\varrho=\sum_{k=0}^{N+1} \varrho^{-k}\mathbf{a}_k(t,x)e^{\mathrm{i}\varrho\varphi},
\]
where $\mathbf{a}_k$ is supported in a neighborhood of $\vartheta$, and
\[
Pu_\varrho\sim 0.
\]

 For this purpose, we first construct smooth functions $\varphi$ and $\mathbf{a}_k$, $k=0,1,\cdots, N+1$, such that 
\[
\varphi(\vartheta(t))=0,
\]
\[
\begin{split}
\Im\varphi(t,x)\geq c_0d^2((t,x),\vartheta)\text{ for }\varrho>0,\quad\Im\varphi(t,x)\leq -c_0d^2((t,x),\vartheta)\text{ for }\varrho<0,\\
\end{split}
\]
and
$\mathcal{I}_j$ vanishes up to order $2(N+j)-1$ on $\vartheta$, i.e.,
\begin{equation}\label{conditionIj}
\frac{\partial^\Theta}{\partial z^\Theta}\mathcal{I}_j=0,\quad\text{for any }|\Theta|\leq 2(N+j)-1\text{ on }\vartheta,
\end{equation}
where and also in the following $\Theta=(\Theta_0=0,\Theta_1,\Theta_2,\Theta_3)$, $j=2,1,\cdots, 1-N$. If the above conditions hold, then
\[
|\mathcal{I}_j|\leq Cr^{2(N+j)},
\]
and then
\[
|\varrho^{j}\mathcal{I}_je^{\mathrm{i}\varrho\varphi}|\leq C\varrho^{-N}.
\]
Overall we have
\[
|Pu_\varrho|\leq C\rho^{-N}.
\]
By taking derivatives, we also have
\[
\|Pu_\varrho\|_{C^k((0,T)\times\Omega)}\leq C\rho^{-N+k}.
\]
We refer to \cite[Lemma 2.49]{kachalov2001inverse} for more details.\\
%

The condition \eqref{conditionIj} for $j=2$ reads
\begin{equation}\label{I2identity}
\frac{\partial^\Theta}{\partial z^\Theta}E(x,\varphi)\mathbf{a}_0=0,\quad\text{for any }|\Theta|\leq 2(N+2)-1\text{ on }\vartheta.
\end{equation}
Recall that
\[
E(x,\varphi)\mathbf{a}_0=\rho\mathbf{a}_0(\partial_t\varphi)^2-(\lambda+\mu)\langle\mathbf{a}_0,\nabla\varphi\rangle\nabla\varphi-\mu|\nabla\varphi|^2\mathbf{a}_0.
\]
Notice that
\[
\langle E(x,\varphi)\mathbf{a}_0,\nabla\varphi\rangle=(\rho(\partial\varphi)^2-(\lambda+2\mu)|\nabla\varphi|^2)\langle \mathbf{a}_0,\nabla\varphi\rangle.
\]
Here and throughout the paper, the inner produce $\langle\cdot\,,\,\cdot\rangle$ and norm $|\cdot|$ are with respect to the Euclidean metric if there is no subscript.
So if $\langle \mathbf{a}_0,\nabla\varphi\rangle\neq 0$, we must have
\[
\frac{\partial^\Theta}{\partial z^\Theta}(\rho(\partial_t\varphi)^2-(\lambda+2\mu)|\nabla\varphi|^2)=0,\quad\text{for any }|\Theta|\leq 2(N+2)-1\text{ on }\vartheta,
\]
and then
\[
\frac{\partial^\Theta}{\partial z^\Theta}\left(\mathbf{a}_0-\frac{\langle\mathbf{a}_0,\nabla\varphi\rangle\nabla\varphi}{|\nabla\varphi|^2}\right)=0,\quad\text{for any }|\Theta|\leq 2(N+2)-1\text{ on }\vartheta.
\]
If  $\langle \mathbf{a}_0,\nabla\varphi\rangle= 0$, we use
\[
E(x,\varphi)\mathbf{a}_0-\frac{\langle E(x,\varphi)\mathbf{a}_0,\nabla\varphi\rangle\nabla\varphi}{|\nabla\varphi|^2}=(\rho(\partial_t\varphi)^2-\mu|\nabla\varphi|^2)\left(\mathbf{a}_0-\frac{\langle\mathbf{a}_0,\nabla\varphi\rangle\nabla\varphi}{|\nabla\varphi|^2}\right)
\]
to conclude that
\[
\frac{\partial^\Theta}{\partial z^\Theta}(\rho(\partial_t\varphi)^2-\mu|\nabla\varphi|^2),\quad\text{for any }|\Theta|\leq 2(N+2)-1\text{ on }\vartheta,
\]
and then, due to the fact $\lambda+\mu>0$,
\[
\frac{\partial^\Theta}{\partial z^\Theta}\langle\mathbf{a}_0,\nabla\varphi\rangle=0,\quad\text{for any }|\Theta|\leq 2(N+2)-1\text{ on }\vartheta.
\]


For $P$-waves, we take $\varphi=\varphi_P$ such that
\begin{equation}\label{Pwavephaseeikonal}
\rho(\partial_t\varphi)^2-(\lambda+2\mu)|\nabla\varphi|^2=0,\quad\text{to order $2(N+2)-1$ on $\vartheta$},
\end{equation}
and
\begin{equation}\label{Pwaveamplitudeparallel}
\mathbf{a}_0=A_P\nabla\varphi.
\end{equation}
Then
\[
\mathcal{I}_2=-E(x,\varphi)\mathbf{a}_0=-\left(\rho(\partial_t\varphi)^2-(\lambda+2\mu)|\nabla\varphi|^2\right)\nabla\varphi.
\]
satisfies the condition \eqref{I2identity}.

For $S$-waves, we take $\varphi=\varphi_S$ such that
\begin{equation}\label{Swavephaseeikonal}
\rho(\partial_t\varphi)^2-\mu|\nabla\varphi|^2=0,\quad\text{to order $2(N+2)-1$ on $\vartheta$},
\end{equation}
and $\mathbf{a}_0$ such that
\begin{equation}\label{Swaveamplitudeorthogonal}
\langle \mathbf{a}_0,\nabla\varphi\rangle=0,\quad\text{to order $2(N+2)-1$ on $\vartheta$}.
\end{equation}
Then the condition \eqref{I2identity} is also satisfied.\\

For the rest of this section we assume $\varrho>0$. For $\varrho<0$, the construction is similar.

\subsection{Phase functions}
We first outline the construction of the phase function $\varphi$. Notice that one can rewrite the Eikonal equations \eqref{Pwavephaseeikonal} and \eqref{Swavephaseeikonal} for $\varphi=\varphi_\bullet$ in the following unified form.
\begin{equation}\label{phaseeikonal}
\frac{\partial^\Theta}{\partial z^\Theta}((\partial_t\varphi)^2-c_\bullet^2|\nabla\varphi|^2)=0,\quad \text{on }\vartheta,\quad|\Theta|\leq 2N+3,
\end{equation}
where $\bullet=P,S$. In the following we drop the subscript $\bullet$.
We can take
\[
\varphi(\tau,z')=\sum_{k=0}^{2N+3}\varphi_k(\tau,z'),
\]
where for each $k=0,1,\cdots, 2N+3$, $\varphi_k$ is a complex-valued homogeneous polynomial of degree $k$ in the variables $z^1,z^2,z^3$.
Actually one can take (cf. \cite{feizmohammadi2019recovery})
\[
\varphi_0(\tau,z')=0,\quad \varphi_1(\tau,z')=z^1=r,\quad \varphi_2(\tau,z')=\sum_{i,j=1}^3H_{ij}(\tau)z^iz^j,
\]
where $H$ is a symmetric matrix solving the Riccati equation
\begin{equation}\label{Ricatti}
\frac{\mathrm{d}}{\mathrm{d}\tau}H+HCH+D=0,\tau\in \left(\tau_-,\tau_+\right),\quad H(\tau_0)=H_0,\text{ with }\Im H_0>0,
\end{equation}
where $C$, $D$ are matrices with $C_{11}=0$, $C_{ii}=2$, $i=2,3$, $C_{ij}=0$, $i\neq j$, $D_{ij}=\frac{1}{4}(\partial_{ij}^2\overline{g}^{11})$, $H_0$ is any given symmetric matrix with $\Im H_0>0$ (For $\varrho<0$, one needs to choose $\Im H_0<0$). Here $(\overline{g}^{ij})$ is the inverse of the Lorentzian metric $(\overline{g}_{ij})$ under the Fermi coordinates $(z^0,z^1,z^2,z^3)$, i.e., $\overline{g}_{ij}=\overline{g}(\frac{\partial}{\partial z^i},\frac{\partial}{\partial z^j})$. Then the equation \eqref{Ricatti} has a unique solution with $\Im(H(\tau))>0$ for all $\tau$ (cf. \cite[Lemma 2.56]{kachalov2001inverse}). \\

For solving the Ricatti equation \eqref{Ricatti}, we take
\[
H(\tau)=Z(\tau)Y(\tau)^{-1},
\]
where $Z(\tau)$ and $Y(\tau)$ are solutions to the first order linear ODEs
\[
\begin{split}
\frac{\mathrm{d}}{\mathrm{d}\tau}Y=CZ,\quad Y(\tau_0)=Y_0,\\
\frac{\mathrm{d}}{\mathrm{d}\tau}Z=-DY,\quad Z(\tau_0)=H_0Y_0,
\end{split}
\]
where $Y_0$ is any non-degenerate matrix.
Here $Y(\tau)$ is non-degnerate for all $\tau$. Moreover, the following identity holds
\begin{equation}\label{identityHY}
\det(\Im H(\tau))|\det(Y(\tau))|^2=c_0,
\end{equation}
where $c_0$ is a constant independent of $\tau$. For more discussions on the Ricatti equation, we refer to \cite[Section 2.4]{kachalov2001inverse}.  We remark here that for $\varrho<0$ we can take conjugates of all the matrices $Y, Z$ and $H_0$.\\

Since the Lorentzian metric $\overline{g}=-\mathrm{d}t^2+g$ is of the product form, we have
\[
\overline{g}_{00}=\overline{g}\left(\frac{\partial}{\partial z^0},\frac{\partial}{\partial z^0}\right)=\overline{g}\left(\frac{1}{\sqrt{2}}\frac{\partial}{\partial y^1}+\frac{1}{\sqrt{2}}\frac{\partial}{\partial y^0},\frac{1}{\sqrt{2}}\frac{\partial}{\partial y^1}+\frac{1}{\sqrt{2}}\frac{\partial}{\partial y^0}\right)=\frac{1}{2}g_{11}-\frac{1}{2}.
\]
Here $(g_{ij})$ is the Riemannian metric under local Fermi coordinates $(y^1,y^2,y^3)$. Using \cite[Lemma 3.4 and Corollary 3.5]{feizmohammadi2019timedependent}, we have
\[
\frac{\partial^2\overline{g}^{11}}{\partial z^1\partial z^i}\Bigg\vert_{\vartheta(t)}=0,
\]
for any $i=1,2,3$, and
\[
\frac{\partial^2\overline{g}^{11}}{\partial z^i\partial z^j}\Big\vert_{\vartheta(t)}=-\frac{\partial^2\overline{g}_{00}}{\partial z^i\partial z^j}\Big\vert_{\vartheta(t)}=-\frac{1}{2}\frac{\partial^2g_{11}}{\partial y^i\partial y^j}\Big\vert_{\gamma(t)}=-R_{1i1j}\vert_{\gamma(t)},
\]
for $i,j=1,2,3$, where $R$ is the Riemann curvature tensor on the Riemannian manifold $(\Omega,g)$. To see the last equality one can calculate
\[
R_{1i1j}=\langle\nabla_1\nabla_i\partial_1,\partial_j\rangle_g-\langle\nabla_i\nabla_1\partial_1,\partial_j\rangle_g.
\]
We have
\[
\nabla_i\partial_1=\Gamma_{i1}^m(g)\partial_m,\quad \nabla_1\partial_1=\Gamma_{11}^m(g)\partial_m,
\]
where $\Gamma(g)$ is the Christoffel symbol of $g$, and
\[
\begin{split}
\nabla_1\nabla_i\partial_1=(\partial_1\Gamma_{i1}^m(g))\partial_m+\Gamma^m_{1k}(g)\Gamma^k_{i1}(g)\partial_m,\\
\nabla_i\nabla_1\partial_1=(\partial_i\Gamma^m_{11}(g))\partial_m+\Gamma_{ik}^m(g)\Gamma^k_{11}(g)\partial_m.
\end{split}
\]
Recall that $\partial_ig_{ik}\vert_\gamma=0$,  then $\Gamma^i_{jk}(g)\vert_\gamma=0$ and $\partial_1\Gamma^i_{jk}(g)\vert_\gamma=0$. Therefore
\[
\begin{split}
R_{1i1j}\vert_\gamma=&-\langle \partial_i\Gamma^m_{11}(g)\partial_m,\partial_j\rangle_g\vert_\gamma\\
=&-g_{mj}\partial_i\Gamma^m_{11}(g)\vert_\gamma\\
=&\frac{1}{2}\partial_{ij}^2g_{11}\vert_\gamma.
\end{split}
\]

Therefore one can take $Y_{11}=1$, $Y_{1j}=Y_{i1}=0$ for all $i,j=1,2,3$.
Denote $\widetilde{Y}=(Y_{\alpha\beta})_{\alpha,\beta=2,3}$, $\widetilde{D}=(D_{\alpha\beta})_{\alpha,\beta=2,3}$. Then
\[
\det(Y)=\det(\widetilde{Y}).
\]
and  $\widetilde{Y}$ solves the equation
\[
\frac{\mathrm{d}^2}{\mathrm{d}t^2}\widetilde{Y}=-4\widetilde{D}\widetilde{Y},
\]
which is equivalent to
\begin{equation}\label{jacobimatrixlocal}
\frac{\mathrm{d}^2}{\mathrm{d}t^2}\widetilde{Y}_{ij}+\sum_{k=2}^3R_{i11k}\widetilde{Y}_{k j}=0.
\end{equation}

Note that $\widetilde{Y}$ can be viewed as a transversal $(1,1)$-tensor along a geodesic $\gamma\subset (\widetilde{\Omega},g)$, the definition of which we shall recall now. We first define the orthogonal complement $\dot{\gamma}(t)^\perp$ of $\dot{\gamma}(t)$ as
\[
\dot{\gamma}(t)^\perp:=\{v\in T_{\gamma(t)}\widetilde{\Omega}\vert \langle \dot{\gamma}(t),v\rangle_g=0\}.
\]
We also define the $(1,1)$-tensor $\Pi_\gamma(t)=\Pi_i^j(t)\frac{\partial}{\partial y^j}\otimes\mathrm{d}x^i$ to be the projection from $T_{\gamma(t)}\widetilde{\Omega}$ onto $\dot{\gamma}(t)^\perp$. To be more precise
\[
\Pi_\gamma(t)v=v-\frac{\langle v,\dot{\gamma}(t)\rangle_g\dot{\gamma}(t)}{|\dot{\gamma}(t)|_g^2}.
\]
We say that a $(1,1)$-tensor $L(t)$ along $\gamma$ is transversal if $\Pi_\gamma L\Pi_\gamma=L$. Note that one can view $L(t)$ as a map $T_{\gamma(t)}\widetilde{\Omega}\rightarrow T_{\gamma(t)}\widetilde{\Omega}$.

Then one can express the equation \eqref{jacobimatrixlocal} in the following invariant form(noticing that $\partial_1=\dot{\gamma}$)
\[
\frac{D^2}{\mathrm{d}t^2}\widetilde{Y}(t)+K(t)\widetilde{Y}(t)=0,
\]
where $K(t):T_{\gamma(t)}\widetilde{\Omega}\rightarrow T_{\gamma(t)}\widetilde{\Omega}$ such that for any $V\in T_{\gamma(t)}\widetilde{\Omega}$,
\[
K(t)V=R(V,\dot{\gamma}(t))\dot{\gamma}(t).
\]

The higher order terms $\varphi_k$, $k=3,\cdots, 2N+3$, can be constructed so that \eqref{phaseeikonal} is satisfied (cf. \cite{feizmohammadi2019timedependent,feizmohammadi2019recovery}). \\
\subsection{Amplitudes of $P$-waves}
In this section, we show how to construct the amplitudes for $P$-waves. We can write
\begin{equation}\label{amplitudej}
\mathbf{a}_j=\sum_{k=0}^{2(N-j)+3}\chi\left(\frac{z'}{\delta}\right)\mathbf{a}_{jk}(\tau,z'),
\end{equation}
where $\mathbf{a}_{jk}$ is a complex-valued polynomial homogeneous of degree $k$ in $z'$, and $\chi$ is a common cut-off function such that $\chi(t)=1$ for $|t|\leq \frac{1}{4}$ and $\chi(t)=0$ for $|t|\geq \frac{1}{2}$ with $\delta>0$ sufficiently small. Actually we only need to determine the derivatives of $\mathbf{a}_j$ on $\vartheta$ up to order $2(N-j)+3$.\\

First we construct $\mathbf{a}_0$. Recall that we took $\mathbf{a}_0=A_P\nabla\varphi$, and so $A_P=\frac{\langle\mathbf{a}_0,\nabla\varphi\rangle}{|\nabla\varphi|^2}$. We start with
\[
\mathcal{I}_1=\mathrm{i}T(x,\varphi)\mathbf{a}_0-E(x,\varphi)\mathbf{a}_1=0,\quad\text{to order $2(N+1)-1$ on $\vartheta$}.
\]
Taking inner product of $\mathcal{I}_1$ and $\nabla\varphi$ yields
\[
\langle\mathrm{i}T(x,\varphi)\mathbf{a}_0-E(x,\varphi)\mathbf{a}_1,\nabla\varphi\rangle=0,\quad\text{to order $2(N+1)-1$ on $\vartheta$.}
\]
Notice that since
\[
(\partial_t\varphi)^2-c_P^2|\nabla\varphi|^2=0\quad\text{on }\vartheta,
\]
we have
\[
\langle E(x,\varphi)\mathbf{a}_1,\nabla\varphi\rangle=(\rho(\partial_t\varphi)^2-(\lambda+2\mu)|\nabla\varphi|^2)\langle\mathbf{a}_1,\nabla\varphi\rangle=0,\quad\text{to order $2(N+2)-1$ on $\vartheta$},
\]
so
\[
\langle T(x,\varphi)\mathbf{a}_0,\nabla\varphi\rangle=0,\quad\text{to order $2(N+1)-1$ on $\vartheta$}.
\]
By tedious calculation, we can write the above equation as
\begin{equation}\label{Ptransportinvariant}
\begin{split}
2\rho\partial_t\varphi\partial_tA_P|\nabla\varphi|^2-2(\lambda+2\mu)\langle\nabla\varphi,\nabla A_P\rangle|\nabla\varphi|^2&\\
+\rho\partial_t\varphi\partial_t|\nabla\varphi|^2A_P-(\lambda+2\mu)\langle\nabla\varphi,\nabla|\nabla\varphi|^2\rangle A_P&\\
-\langle \nabla\varphi,\nabla(\lambda+2\mu)\rangle A_P|\nabla\varphi|^2+\rho\partial_t^2\varphi A_P|\nabla\varphi|^2-(\lambda+2\mu)\Delta\varphi A_P|\nabla\varphi|^2&=0.
\end{split}
\end{equation}

Under Fermi coordinates,
\[
\partial_t\varphi=-\frac{1}{\sqrt{2}},\quad \partial_{y_1}\varphi=\frac{1}{\sqrt{2}},\quad \partial_{y_\alpha}\varphi=0,\quad\text{on }\vartheta.
\]
Then
\[
\langle\nabla\varphi,\nabla\rangle\vert_\vartheta=c_P^{-2}\frac{1}{\sqrt{2}}\partial_s,
\]
and consequently
\[
\rho\partial_t\varphi\partial_t-(\lambda+2\mu)\langle\nabla\varphi,\nabla\rangle=-\frac{1}{\sqrt{2}}\rho(\partial_t+\partial_s)=-\rho\partial_\tau,\quad\text{on }\vartheta.
\]
Notice that
\[
|\nabla\varphi|^2=\frac{1}{2}c_P^{-2},\quad\text{on }\vartheta.
\]
We have the following transport equation on $\vartheta$,
\[
\rho c_P^{-2}\partial_\tau A_P-\frac{1}{2}c_P^{-3}\rho\partial_\tau c_P A_P+\frac{1}{2}\rho\partial_\tau(\lambda+2\mu)c_P^{-2}A_P+\frac{1}{2}(\lambda+2\mu)c_P^{-2}\frac{\partial^2\varphi}{\partial y^\alpha\partial y^\alpha}A_P=0,
\]
where we have used
\[
\Delta\varphi=c_P^{-2}\partial_i^2\varphi-c_P^{-2}\Gamma_{ij}^k\partial_k\varphi=c_P^{-2}\sum_{\alpha=2}^3\frac{\partial^2\varphi}{\partial y^\alpha\partial y^\alpha}+\frac{1}{\sqrt{2}}c_P^{-3}\frac{\partial c_P}{\partial s}=\sum_{\alpha=2}^3\frac{\partial^2\varphi}{\partial y^\alpha\partial y^\alpha}+c_P^{-3}\frac{\partial c_P}{\partial \tau}.
\]
Therefore $A_P$ satisfies the transport equation along $\vartheta$,
\begin{equation}\label{transportequationAp}
\mathcal{T}A_P:=\partial_\tau A_P+\frac{1}{2}(\lambda+2\mu)^{-1}\partial_\tau(\lambda+2\mu)A_P-\frac{1}{2}c_P^{-1}\partial_\tau c_PA_P+\frac{1}{2}\det(Y_P)^{-1}\partial_\tau(\det(Y_P))A_P=0.
\end{equation}
So one can take
\[
A_P=C_0(\lambda+2\mu)^{-1/2}c_P^{1/2}\det(Y_P)^{-1/2}=C_0c_P^{-1/2}\rho^{-1/2}\det(Y_P)^{-1/2},\quad\text{on }\vartheta,
\]
as a solution to the above transport equation.
Taking derivative $\frac{\partial}{\partial z^\alpha}$ of \eqref{Ptransportinvariant}, we can get transport equations for $\frac{\partial A_P}{\partial z^\alpha}$, $\alpha=1,2,3$, in the form
\[
\mathcal{T}\left(\frac{\partial A_P}{\partial z^\alpha}\right)=F(\varphi,\partial\varphi,\partial^2\varphi,\partial^3\varphi,A_P).
\]
Imposing arbitrary initial values and solving the above transport equations along $\vartheta$, one can get $\frac{\partial A_P}{\partial z^\alpha}$.
Successively, one can determine
\[
\frac{\partial^\Theta A_P}{\partial z^\Theta}\Big\vert_\vartheta,\quad |\Theta|\leq 2N+1.
\]
We can take arbitrary values for
\begin{equation}\label{AParbitrary}
\frac{\partial^\Theta A_P}{\partial z^\Theta}\Big\vert_\vartheta
\end{equation}
for $|\Theta|=2N+2,2N+3$.

Then we use the equation
\[
\mathcal{I}_1-\frac{\langle \mathcal{I}_1,\nabla\varphi\rangle\nabla\varphi}{|\nabla\varphi|^2}=0,\quad\text{to order $2(N+1)-1$ on $\vartheta$},
\]
and the identity
\[
E(x,\varphi)\mathbf{a}_1-\frac{\langle E(x,\varphi)\mathbf{a}_1,\nabla\varphi\rangle\nabla\varphi}{|\nabla\varphi|^2}=(\rho(\partial_t\varphi)^2-\mu|\nabla\varphi|^2)\left(\mathbf{a}_1-\frac{\langle \mathbf{a}_1,\nabla\varphi\rangle\nabla\varphi}{|\nabla\varphi|^2}\right)
\]
to determine $
\mathbf{b}_1:=\mathbf{a}_1-\frac{\langle \mathbf{a}_1,\nabla\varphi\rangle\nabla\varphi}{|\nabla\varphi|^2}
$
 on $\vartheta$ to order $2N+1$, since $\rho(\partial_t\varphi)^2-\mu|\nabla\varphi|^2\neq 0$. Using
\[
\langle\mathrm{i}T(x,\varphi)\mathbf{a}_1-E(x,\varphi)\mathbf{a}_{2}+P\mathbf{a}_{0},\nabla\varphi\rangle=0,\quad\text{to order $2N-1$ on $\vartheta$},
\]
one can determine $A_1:=\frac{\langle \mathbf{a}_1,\nabla\varphi\rangle}{|\nabla\varphi|^2}$ on $\vartheta$ to order $2N-1$ by solving linear transport equations $\vartheta$ in a similar way as above. Choosing arbitrary values for $\frac{\partial^\Theta A_1}{\partial z^\Theta}\Big\vert_\vartheta=0$ for $|\Theta|=2N,2N+1$, we have determined $\mathbf{a}_1=\mathbf{b}_1+A_1\nabla\varphi$ on $\vartheta$ to order $2N+1$.
Continuing with this process, one can determine $\mathbf{a}_{j}$ on $\vartheta$ up to order $2N-2j+3$, for $j=2,3,\cdots, N+1$. We remark here that at the last step one can only determine $\mathbf{b}_{N+1}:=\mathbf{a}_{N+1}-\frac{\langle \mathbf{a}_{N+1},\nabla\varphi\rangle\nabla\varphi}{|\nabla\varphi|^2}$ on $\vartheta$ to order $1$, and we can take arbitrary values for $A_{N+1}:=\frac{\langle \mathbf{a}_{N+1},\nabla\varphi\rangle}{|\nabla\varphi|^2}$ on $\vartheta$ to order $1$.

\subsection{Amplitudes of $S$-waves}
Now we construct the amplitudes $\mathbf{a}_j$, $j=0,1,\cdots, N+1$, still in the form \eqref{amplitudej} for $S$-waves.
Notice that
\[
\begin{split}
E(x,\varphi)\mathbf{a}_1-\frac{\langle E(x,\varphi)\mathbf{a}_1,\nabla\varphi\rangle\nabla\varphi}{|\nabla\varphi|^2}=(\rho(\partial_t\varphi)^2-\mu|\nabla\varphi|^2)&\left(\mathbf{a}_1-\frac{\langle \mathbf{a}_1,\nabla\varphi\rangle\nabla\varphi}{|\nabla\varphi|^2}\right)=0,\\
&\quad\text{to order $2(N+2)-1$ on $\vartheta$}.
\end{split}
\]
Then we have
\[
T(x,\varphi)\mathbf{a}_0-\frac{\langle T(x,\varphi)\mathbf{a}_0,\nabla\varphi\rangle\nabla\varphi}{|\nabla\varphi|^2}=0,\quad\text{to order $2(N+1)-1$ on $\vartheta$},
\]
which can be written as
\begin{equation}\label{Stransportinvariant}
\begin{split}
2\rho\partial_t\varphi\partial_t\mathbf{a}_0-2\mu\langle\nabla\varphi,\nabla\mathbf{a}_0\rangle&\\
+\rho|\nabla\varphi|^{-2}\langle\mathbf{a}_0,\nabla(\partial_t\varphi)^2\rangle\nabla\varphi-\mu|\nabla\varphi|^{-2}\langle\mathbf{a}_0,\nabla|\nabla\varphi|^2\rangle\nabla\varphi&\\
-\langle\nabla\varphi,\nabla\mu\rangle\mathbf{a}_0+\rho\partial_t^2\varphi \mathbf{a}_0-\mu\Delta\varphi\mathbf{a}_0&=0,
\end{split}
\end{equation}
where we have used the fact that
\[
\partial_t\langle\mathbf{a}_0,\nabla\varphi\rangle=\langle\partial_t\mathbf{a}_0,\nabla\varphi\rangle+\langle\mathbf{a}_0,\nabla\partial_t\varphi\rangle=0,\quad\text{to order $2(N+1)$ on $\vartheta$},
\]
\[
\nabla\langle\mathbf{a}_0,\nabla\varphi\rangle=\nabla\varphi\cdot\nabla^T\mathbf{a}_0+\nabla^2\varphi\cdot\mathbf{a}_0=0,\quad\text{to order $2(N+1)$ on $\vartheta$}.
\]
Write $\mathbf{a}_0=(a_{01},a_{02},a_{03})$ in local Fermi coordinates and notice that $a_{01}=0$.
Then we can write the $0$-th order of \eqref{Stransportinvariant} as
\[
-2\rho\partial_\tau a_{0\beta}-c_S^{-2}\partial_\tau\mu a_{0\beta}+\mu c_S^{-3}\partial_\tau c_Sa_{0\beta}-\mu\sum_{\alpha=2}^3\frac{\partial^2\varphi}{\partial y^\alpha\partial y^\alpha}a_{0\beta}=0,\quad\beta=2,3,
\]
where we have used the identity
\[
a_{0\alpha;1}=\partial_1 a_{0\alpha}-\Gamma_{\alpha1}^\beta a_{0\beta}=\frac{ \partial a_{0\alpha}}{\partial s}-c_S^{-1}\frac{\partial c_S}{\partial s}a_{0\alpha}.
\]
Therefore $a_{0\alpha}$ satisfies the transport equation along $\vartheta$,
\begin{equation}\label{transportequationAs}
\mathcal{T}a_{0\alpha}=\partial_\tau a_{0\alpha}+\frac{1}{2}\mu^{-1}\partial_\tau\mu a_{0\alpha}-\frac{1}{2}c_S^{-1}\partial_\tau c_Sa_{0\alpha}+\frac{1}{2}\det(Y_S)^{-1}\partial_\tau(\det(Y_S))a_{0\alpha}=0.
\end{equation}
Actually we can take $a_{0\alpha}=C_0c_S^{-1/2}\rho^{-1/2}\det(Y_S)^{-1/2}$ on $\vartheta$, for $\alpha=2,3$. To write it in a geometrically invariant form, one can choose a (unit) covector field $\eta$ orthogonal and parallel along $\gamma$, and then take 
\[
\mathbf{a}_0\vert_\vartheta=A_S\eta,
\]
where 
\[
A_S=C_0c_S^{-1/2}\rho^{-1/2}\det(Y_S)^{-1/2}.
\]
Similar as for $P$-waves, one can determine 
\[
\frac{\partial^\Theta \mathbf{a}_0}{\partial z^\Theta}\Big\vert_\vartheta,\quad |\Theta|\leq 2N+1.
\]
For $|\Theta|= 2N+2,2N+3$, $\frac{\partial^\Theta \mathbf{a}_0}{\partial z^\Theta}\big\vert_\vartheta$ can be determined by the requirement \eqref{Swaveamplitudeorthogonal} (with certain freedoms though similar as for \eqref{AParbitrary}). Then one can determine
\[
B_1:=\frac{\langle\mathbf{a}_1,\nabla\varphi\rangle}{|\nabla\varphi|^2}
\]
on $\vartheta$ up to order $2N+1$ using the fact
\[
\langle \mathcal{I}_1,\nabla\varphi\rangle=0,\quad\text{to order $2(N+1)-1$ on $\vartheta$}.
\]
And then one can determine
\[
\mathbf{b}_1=\mathbf{a}_1-\frac{\langle\mathbf{a}_1,\nabla\varphi\rangle\nabla\varphi}{|\nabla\varphi|^2}
\]
on $\vartheta$ up to order $2N-1$ using
\[
\langle\mathrm{i}T(x,\varphi)\mathbf{a}_1-E(x,\varphi)\mathbf{a}_{2}+P\mathbf{a}_{0},\nabla\varphi\rangle=0,\quad\text{to order $2N-1$ on $\vartheta$}.
\]
For $|\Theta|= 2N,2N+1$, $\frac{\partial^\Theta \mathbf{b}_1}{\partial z^\Theta}\big\vert_\vartheta$ would be constrained by the obvious property $\langle \mathbf{b}_1,\nabla\varphi\rangle=0$.
The lower order terms can be constructed successively as for $P$-waves. At the last step one can only determine $B_{N+1}:=\frac{\langle \mathbf{a}_{N+1},\nabla\varphi\rangle}{|\nabla\varphi|^2}$ on $\vartheta$ to order $1$, and we can take arbitrary values for  $\mathbf{b}_{N+1}:=\mathbf{a}_{N+1}-\frac{\langle \mathbf{a}_{N+1},\nabla\varphi\rangle\nabla\varphi}{|\nabla\varphi|^2}$ on $\vartheta$ to order $1$.\\


\section{Characterization of reflections and scattering control}\label{gaussianwithoutreflection}
Once one designs some boundary sources that generate $P$- or $S$- Gaussian beams, those waves will be reflected when they hit the boundary generating both $P$- and $S$- waves. And of course there would also be further multiple reflections. The aim of this section is to analyze those mode conversions and do a blind scattering control, that is, to remove boundary reflections without knowing the material parameters $\lambda,\mu,\rho$, but only using the boundary measurements $\Lambda^\mathrm{lin}$. Such programme has been implemented in \cite{hintz2022dirichlet} for geometric optics solutions. Here we carry out this scheme for Gaussian beam solutions.

\subsection{Reflection of Gaussian beams}\label{secreflectiongaussian}
Assume $\vartheta(t)=(t,\gamma(t)),t\in[t_0,t_1]$ is a null geodesic in $((0,T)\times\Omega,-\mathrm{d}t^2+g_\bullet)$ with $\gamma(t_0),\gamma(t_1)\in\partial\Omega$. By the convexity assumption on $\partial\Omega$, $\vartheta$ intersects with $(0,T)\times\partial\Omega$ transversely. Extend $\vartheta$ a little bit such that $\vartheta(t),t\in(t_0-\varepsilon,t_1+\varepsilon)$ is a null geodesic in the extended domain $((0,T)\times\widetilde{\Omega},-\mathrm{d}t^2+g_\bullet)$. We also extend $\lambda,\mu,\rho$ from $\Omega$ to $\widetilde{\Omega}$ smoothly.\\

First let us take a $P$-wave Gaussian beam as the incident wave,
\begin{equation}\label{Pincidentwave}
u_\varrho^+=\sum_{k=0}^{N+1}\varrho^{-k}\mathbf{a}_{P,k}^+e^{\mathrm{i}\varrho\varphi_P^+},
\end{equation}
concentrating near a $P$-wave null geodesic $\vartheta$ as in previous section. 
When this incident wave hits the boundary at point $p:=\vartheta(t_1)=(t_1,\gamma(t_1))$, it will be reflected according to the zero Dirichlet boundary condition. After reflection, both $P$- and $S$- waves would be generated.
Therefore in a neighborhood of $p$ in $(0,T)\times\Omega$, the solution would be of the form
\begin{equation}\label{Ptotalgaussianbeam}
\begin{split}
u_\varrho=u_\varrho^++u_\varrho^-:=\sum_{k=0}^{N+1}\varrho^{-k}\mathbf{a}_{P,k}^+e^{\mathrm{i}\varrho\varphi_P^+}+\sum_{k=0}^{N+1}\varrho^{-k}\mathbf{a}_{P,k}^-e^{\mathrm{i}\varrho\varphi_P^-}+\sum_{k=0}^{N+1}\varrho^{-k}\mathbf{a}_{S,k}^-e^{\mathrm{i}\varrho\varphi_S^-},
\end{split}
\end{equation}
where the later two terms represent the reflected $P$- and $S$- waves respectively, which are also of the forms as in previous section.
We impose the conditions
\[
\varphi_P^+\vert_{(0,T)\times\partial \Omega}=\varphi_P^-\vert_{(0,T)\times\partial \Omega}=\varphi_S^-\vert_{(0,T)\times\partial \Omega},\quad\text{to order }2(N+2)-1\text{ at }p.
\]
Assume the reflected $P$-wave travel along a null geodesic $\vartheta_P^-$ in $((0,T)\times\Omega,-\mathrm{d}t^2+g_P)$ and  the reflected $S$-wave travel along a null geodesic $\vartheta_S^-$ in $((0,T)\times\Omega,-\mathrm{d}t^2+g_S)$. By Snell's law, we have
\[
\dot{\vartheta}_P^-(t_1)^{\flat,P}\vert_{T((0,T)\times\partial\Omega)}=\dot{\vartheta}_S^-(t_1)^{\flat,S}\vert_{T((0,T)\times\partial\Omega)}=\dot{\vartheta}_P^+(t_1)^{\flat,P}\vert_{T((0,T)\times\partial\Omega)}\in T^*((0,T)\times\partial\Omega).
\]
Here the musical diffeomorphism $\flat$ is taken with respect to the indicated Lorentzian metric.
 Then one can solve the Eikonal equation \eqref{phaseeikonal} along $\vartheta_{P/S}^-$  to obtain $\varphi_{P/S}^-$. In particular the derivatives of $\varphi^-_{P/S}$ at $p$ up to order $2N+3$ can be determined.

To construct the amplitudes $\mathbf{a}_{P,k}^-$ and $\mathbf{a}_{S,k}^-$ for reflected waves, one imposes the conditions
\begin{equation}\label{equationamplitudes}
(\mathbf{a}_{P,k}^++\mathbf{a}_{P,k}^-+\mathbf{a}_{S,k}^-)\vert_{(0,T)\times\partial\Omega}=0,\quad\text{to order }2(N-k)+3\text{ at }p,
\end{equation}
which follows from the zero Dirichlet boundary condition.
The derivatives of $\mathbf{a}_{P/S,k}^-\vert_{(0,T)\times\partial\Omega}$ at $p$ up to order $2(N-k)+3$ can be determined by also incorporating the equations \eqref{conditionIj} for them.  We shall give more details below.\\

Let $(y^1,y^2)$ be a local coordinates of $\partial \Omega$ near $x_p$ such that the metric induced by the Euclidean metric under this coordinates is $\delta_{ij}$ at $x_p$, where $p=(t_p,x_p)$. Then for any $x\in \Omega$ in a neighborhood of $x_p$, denote $x=(y^1,y^2,y^3)$ where $y^3$ is the distance from $x$ to the boundary $\partial\Omega$, and $(y^1,y^2)$ is the local coordinate for the nearest point on $\partial\Omega$ to $x_p$. Then we have the boundary normal coordinates $(y^0=t,y^1,y^2,y^3)$ for $(0,T)\times\Omega$ near $p$. The Minkowskian metric is then $\mathrm{diag}(-1,1,1,1)$ at $p$ under this boundary normal coordinates. Without loss of generality we assume $p=(0,0,0,0)$ and then denote $z=(y^0,y^1,y^2)$. Then the boundary $(0,T)\times\partial\Omega$ is locally expressed as $\{y^3=0\}$.

Assume in boundary normal coordinates,
\[
\partial_t\varphi_P^+(p)=\xi_0^+,\quad \partial_t\varphi_P^-(p)=\xi_{P,0}^-,\quad \partial_t\varphi_S^-(p)=\xi_{S,0}^-,
\]
and
\[
\nabla\varphi_P^+(p)=\xi^+=(\xi^+_1,\xi^+_2,\xi^+_3),
\]
\[
\nabla\varphi_P^-(p)=\xi^-_P=(\xi^-_{P,1},\xi^-_{P,2},\xi^-_{P,3}),\quad\quad\nabla\varphi_S^-(p)=\xi^-_S=(\xi^-_{S,1},\xi^-_{S,2},\xi^-_{S,3}).
\]
We have
\[
\xi^+_0=\xi^-_{P,0}=\xi^-_{S,0},\quad\xi^+_1=\xi^-_{P,1}=\xi^-_{S,1},\quad \xi^+_2=\xi^-_{P,2}=\xi^-_{S,2}.
\]
Since $\varphi_P^+$, $\varphi_P^-$, $\varphi_S^-$ all satisfy the corresponding Eikonal equations at point $p$, we have
\[
\xi^+_3=\sqrt{c_P^{-2}(\xi_0^+)^2-(\xi^+_1)^2-(\xi^+_2)^2},\quad \xi^-_{P,3}=-\xi^+_3,\quad \xi^-_{S,3}=-\sqrt{c_S^{-2}(\xi_0^+)^2-(\xi^+_1)^2-(\xi^+_2)^2}.
\]
To simplify notations, we denote
\[
\xi^+_0=\xi^-_{P,0}=\xi^-_{S,0}=\xi_0,
\]
and
\[
\xi^+=(\xi_1,\xi_2,\xi_3),\quad \xi^-_P=(\xi_1,\xi_2,-\xi_3),\quad \xi^-_S=(\xi_1,\xi_2,-\xi_{S,3}).
\]

By \eqref{equationamplitudes} with $k=0$, we have
\[
\mathbf{a}_{P,0}^-+\mathbf{a}_{S,0}^-=-\mathbf{a}_{P,0}^+\quad \text{at }p.
\]
We note that $\mathbf{a}_{P,0}^-$ is of the form
\begin{equation}\label{ap0minus}
\mathbf{a}_{P,0}^-=A_P^-\nabla\varphi_P^-,\quad\text{on }\vartheta_P^-,
\end{equation}
by \eqref{Pwaveamplitudeparallel}, and $\mathbf{a}_{S,0}^-$ satisfies 
\begin{equation}\label{as0minus}
\langle\mathbf{a}_{S,0}^-,\nabla\varphi_P^-\rangle=0,\quad\text{to order $2(N+2)-1$ on $\vartheta_S^-$},
\end{equation}
by \eqref{Swaveamplitudeorthogonal}.
In particular,
\[
\mathbf{a}_{P,0}^+(p)=A_P^+(p)\xi_P^+,\quad \mathbf{a}_{P,0}^-(p)=A_P^-(p)\xi_P^-,\quad \xi_S^-\cdot \mathbf{a}_{S,0}^-(p)=0.
\]
Therefore we have
\begin{equation}\label{systemprincipalreflection}
\left(\begin{array}{cccc}
\xi_1 & 1 & &\\
\xi_2 & & 1&\\
-\xi_3& &&1\\
0&\xi_1&\xi_2&-\xi_{S,3}
\end{array}\right)
\left(\begin{array}{c}
A_P^-\\
a_{S,01}^-\\
a_{S,02}^-\\
a_{S,03}^-
\end{array}\right)(p)=-\left(\begin{array}{c}
A_P^+\xi_1\\
A_P^+\xi_2\\
A_P^+\xi_3\\
0
\end{array}\right)(p).
\end{equation}
The above linear system is non-singular, thus one can determine $A_P^-,\mathbf{a}^-_{S,0}$ at point $p$. Then one can solve the transport equations \eqref{transportequationAp} for $A_P^-$ on $\vartheta_P^-$, and \eqref{transportequationAs} for $\mathbf{a}^-_{S,0}$ on $\vartheta_S^-$. Taking tangential derivatives of \eqref{equationamplitudes} with $k=0$ and also using \eqref{ap0minus}\eqref{as0minus}, one can determine the derivatives of $A_P^-\vert_{(0,T)\times\partial\Omega},\mathbf{a}^-_{S,0}\vert_{(0,T)\times\partial\Omega}$ at $p$. Then solving the respective transport equations, one can determine the derivatives of $A_P^-$ on $\vartheta_P^-$ and $\mathbf{a}^-_{S,0}$ on $\vartheta_S^-$. Similarly, one can also determine the higher order derivatives of $A_P^-$ and  $\mathbf{a}^-_{S,0}$. Successively one can finish the construction of $\mathbf{a}^-_{P,k}$ and
$\mathbf{a}^-_{S,k}$, $k=1,2,\cdots, N+1$.\\

The phase functions restricted on the boundary can be expressed as
\[
\varphi_P^{+}(z)=\Phi(z)+\Xi_P^{+}(z),\quad \varphi_P^{-}(z)=\Phi(z)+\Xi_P^{-}(z),\quad \varphi_S^{-}(z)=\Phi(z)+\Xi_S^{-}(z),
\]
where $\Phi$ is a polynomial homogeneous of degree $2N+3$ in $z$, and $|\Xi_P^{+}|,|\Xi_P^{-}|,|\Xi_S^{-}|\leq C|z|^{2N+4}$. Then one can estimate
\[
\begin{split}
&\vert\mathbf{a}_{P,k}^+e^{\mathrm{i}\varrho\varphi_P^+}+\mathbf{a}_{P,k}^-e^{\mathrm{i}\varrho\varphi_P^-}+\mathbf{a}_{S,k}^-e^{\mathrm{i}\varrho\varphi_S^-}\vert\\
\leq &|e^{\mathrm{i}\varrho\Phi}||\mathbf{a}_{P,k}^++\mathbf{a}_{P,k}^-+\mathbf{a}_{S,k}^-|+|\mathbf{a}_{P,k}^+||e^{\mathrm{i}\varrho\varphi_P^+}-e^{\mathrm{i}\varrho\Phi}|+|\mathbf{a}_{P,k}^-||e^{\mathrm{i}\varrho\varphi_P^-}-e^{\mathrm{i}\varrho\Phi}|+|\mathbf{a}_{S,k}^-||e^{\mathrm{i}\varrho\varphi_S^-}-e^{\mathrm{i}\varrho\Phi}|.
\end{split}
\]
Notice that
\[
|e^{\mathrm{i}\varrho\Phi}||\mathbf{a}_{P,k}^++\mathbf{a}_{P,k}^-+\mathbf{a}_{S,k}^-|\leq e^{-C\varrho|z|^2}|z|^{2(N-k)+4}\leq C\varrho^{-N+k-2}.
\]
For the other terms, we have, for example,
\[
\begin{split}
|e^{\mathrm{i}\varrho\varphi_P^+}-e^{\mathrm{i}\varrho\Phi}|\leq &C\varrho|e^{\mathrm{i}\varrho\Phi}||\varphi_P^+-\Phi|\Big|\int_0^1e^{\mathrm{i}\varrho r(\varphi_P^+-\Phi)}\mathrm{d}r\Big|\\
\leq &C e^{-C\varrho|z|^2}\varrho|z|^{2N+4} e^{C\varrho|z|^{2N+4}}\\
\leq &C\varrho^{-N-2}.
\end{split}
\]
Therefore
\[
\|u_\varrho\|_{C(R_1)}\leq C\varrho^{-N-2},
\]
where $R_1\subset(0,T)\times\partial \Omega$ is a neighborhood of $p$. By taking derivatives, one can also prove
\[
\|u_\varrho\|_{C^k(R_1)}\leq C\varrho^{-N+k-2}.
\]

\begin{figure}[htbp]
\centering
\includegraphics[width=0.3\textwidth]{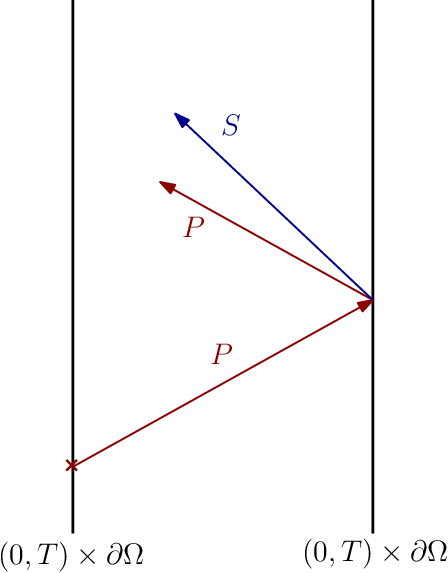}
\hspace{7em}
\includegraphics[width=0.3\textwidth]{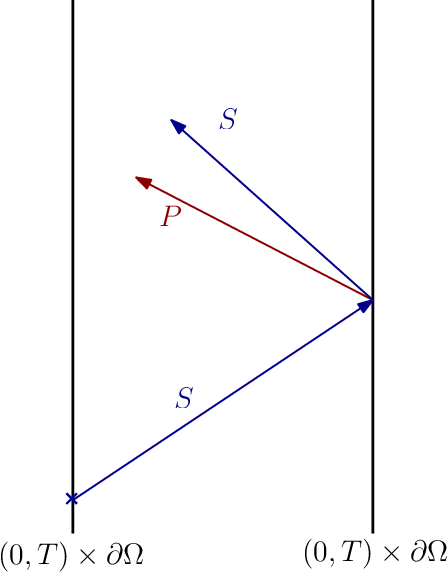}
\caption{Gaussian beams with reflections}
\label{GBwithrefction}
\end{figure}

If one takes an $S$-wave as the incident wave,
\[
u_\varrho^+=\sum_{k=0}^{N+1}\varrho^{-k}\mathbf{a}_{S,k}^+e^{\mathrm{i}\varrho\varphi_S^+},
\]
one can also characterize its reflection similarly. Near the reflection point, the incident plus the reflected waves are in the form
\[
\begin{split}
u_\varrho=u_\varrho^++u_\varrho^-:=&\sum_{k=0}^{N+1}\varrho^{-k}\mathbf{a}_{S,k}^+e^{\mathrm{i}\varrho\varphi_S^+}+\sum_{k=0}^{N+1}\varrho^{-k}\mathbf{a}_{P,k}^-e^{\mathrm{i}\varrho\varphi_P^-}+\sum_{k=0}^{N+1}\varrho^{-k}\mathbf{a}_{S,k}^-e^{\mathrm{i}\varrho\varphi_S^-}.
\end{split}
\]

Here we only briefly explain how to determine $\mathbf{a}_{P,0}^-$ and $\mathbf{a}_{S,0}^-$ at $p$. Notice that they still satisfy \eqref{ap0minus} and \eqref{as0minus}.
Assume in boundary normal coordinates
\[
\nabla \varphi_S^+(p)=\xi^+_S, \quad\nabla \varphi_P^-(p)=\xi^-_P, \quad\nabla \varphi_S^-(p)=\xi^-_S.
\] 
Denote
\[
\xi_0=\partial_t\varphi_S^+(p)=\partial_t\varphi_S^-(p)=\partial_t\varphi_P^-(p),
\]
and
\[
\xi^+_S:=\xi=(\xi_1,\xi_2,\xi_3), \quad \xi^-_S=(\xi_1,\xi_2,-\xi_3),\quad \xi^-_P=(\xi_1,\xi_2,-\xi_{P,3}),
\]
where 
\begin{equation}\label{xiP3}
\xi_{3}=\sqrt{c_S^{-2}\xi_0^2-\xi_1^2-\xi_2^2},\quad \xi_{P,3}=\sqrt{c_P^{-2}\xi_0^2-\xi_1^2-\xi_2^2}.
\end{equation}

In parallel with \eqref{systemprincipalreflection}, now we have
\begin{equation}\label{systemprincipalreflectionS}
\left(\begin{array}{cccc}
\xi_1 & 1 & &\\
\xi_2 & & 1&\\
-\xi_{P,3}& &&1\\
0&\xi_1&\xi_2&-\xi_{3}
\end{array}\right)
\left(\begin{array}{c}
A_P^-\\
a_{S,01}^-\\
a_{S,02}^-\\
a_{S,03}^-
\end{array}\right)(p)=-\left(\begin{array}{c}
a_{S,01}^+\\
a_{S,02}^+\\
a_{S,03}^+\\
0
\end{array}\right)(p).
\end{equation}
Therefore one can determine $\mathbf{a}_{P,0}^-$ and $\mathbf{a}_{S,0}^-$ at $p$. 

We remark here that $\xi_{P,3}$ could be imaginary, which corresponds to an evanescent wave. For this instance, the reflected $P$-wave $\sum_{k=0}^{N+1}\varrho^{-k}\mathbf{a}_{P,k}^-e^{\mathrm{i}\varrho\varphi_P^-}$ concentrate in a neighborhood of $p$.

\subsection{Remainder for Gaussian beams with reflections}
Now take an incident Gaussian beam solution $u_\varrho^+$ and denote $u_\varrho$ to be the total Gaussian beam solution with possibly multiple reflections. Assume $R_0\subset(0,T)\times\partial \Omega$ is a small neighborhood of $p_0=\vartheta(t_0)$, then take $u$ to be the solution to the initial boundary value problem 
\begin{equation}\label{IBVPwithreflection}
\begin{cases}
\rho\frac{\partial^2u}{\partial t^2}-\nabla\cdot \widetilde{\sigma}(u)=0,\quad &(t,x)\in (0,T)\times\Omega,\\
u(t,x)=u_\varrho^+(t,x)\vert_{R_0},\quad &(t,x)\in (0,T)\times\partial \Omega,\\
u(0,x)=\frac{\partial}{\partial t}u(0,x)=0,\quad &x\in \Omega,
\end{cases}
\end{equation}

Denote $R_\varrho=u-u_\varrho$. By previous discussions we have
\[
\|\rho\frac{\partial^2 R_\varrho}{\partial t^2}-\nabla\cdot \widetilde{\sigma}(R_\varrho)\|_{C^k((0,T)\times\Omega)}\leq C\varrho^{-N+k},
\]
and 
\[
\|R_\varrho\|_{C^{k+1}((0,T)\times\partial\Omega)}\leq C\varrho^{-N+k-1}.
\]
By standard theory for hyperbolic equations, we have
\[
\|R_\varrho\|_{H^{k+1}((0,T)\times\Omega)}\leq C\varrho^{-N+k}.
\]
By Sobolev embedding, for any $K>0$, one can take $N$ sufficiently large so that
\[
\|R_\varrho\|_{C^j((0,T)\times\Omega)}\leq C\varrho^{-2K+j-1}.
\]

\subsection{Scattering control for $P$-waves}
The reflection on the boundary would make the waves traveling inside the domain very difficult to keep track of.
In this subsection, we show how to remove the boundary reflection using $\Lambda^\mathrm{lin}$. 
First we need the boundary determination result in \cite{rachele2000boundary}.
\begin{proposition}\cite[Theorem 1]{rachele2000boundary}
The Dirichlet-to-Neumann map $\Lambda^\mathrm{lin}$ for the linear elastic wave equation uniquely determine $\lambda,\mu,\rho$ and all their derivatives on $\partial\Omega$.
\end{proposition}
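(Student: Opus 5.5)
The plan is to recover $\lambda,\mu,\rho$ and all their normal derivatives at each boundary point from the high-frequency behaviour of $\Lambda^\mathrm{lin}$, following Rachele's approach. We treat $\Lambda^\mathrm{lin}$ as a pseudodifferential operator near the boundary, away from the glancing set. Fix $(t_p,x_p)\in(0,T)\times\partial\Omega$ and work in the boundary normal coordinates $(y^0=t,y^1,y^2,y^3)$ introduced in Section \ref{secreflectiongaussian}. Since the wave is recorded only for short times near $t_p$, the full symbol of $\Lambda^\mathrm{lin}$ microlocally near $(t_p,x_p)$ depends only on the Taylor expansion of the coefficients at $x_p$. In the elliptic region $\xi_0^2<c_S^2(\xi_1^2+\xi_2^2)$ both $P$- and $S$-modes are evanescent. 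There we factor the operator $P$ in the normal variable,
\[
\rho\partial_t^2-\nabla\cdot\widetilde\sigma \;\sim\; (\partial_{y^3}-A(y,D_z))\,M\,(\partial_{y^3}-B(y,D_z)),
\]
with $B$ a $3\times3$ matrix pseudodifferential operator whose principal symbol has spectrum in $\{\Re<0\}$, chosen so that decaying solutions satisfy $\partial_{y^3}u=B u$. Then $\Lambda^\mathrm{lin} f = \bigl(\lambda (\nabla\cdot u)\nu+\mu(\nabla u+\nabla^Tu)\nu\bigr)|_{y^3=0}$ modulo smoothing, which gives $\Lambda^\mathrm{lin}$ as a matrix pseudodifferential operator of order one whose full symbol $\sigma_\Lambda\sim\sum_{j\le1}\sigma_j$ is determined by $\Lambda^\mathrm{lin}$.

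The first step handles the principal symbol. We compute $\sigma_1(x_p,\xi_0,\xi')$ explicitly in terms of $\lambda,\mu,\rho$ at $x_p$. The square roots $\sqrt{|\xi'|^2-c_P^{-2}\xi_0^2}$ and $\sqrt{|\xi'|^2-c_S^{-2}\xi_0^2}$ appear in it, and so does the Rayleigh-type denominator. Letting $\xi_0\to0$ leaves the static elastic DtN symbol, which determines $\mu$ and $\lambda$; this is the standard Calderón-type computation. The $\xi_0$-dependence then gives $c_P,c_S$ and hence $\rho$. Alternatively, the singularities of $\sigma_1$ at $\xi_0^2=c_\bullet^2|\xi'|^2$, where the square roots vanish, directly reveal $c_P$ and $c_S$. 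As a third route, in the hyperbolic region the reflection matrices \eqref{systemprincipalreflection} and \eqref{systemprincipalreflectionS} can be read off from $\Lambda^\mathrm{lin}$ applied to the boundary traces of incoming Gaussian beams; this also yields the wavespeeds, since $\xi_3,\xi_{S,3}$ are then known for every tangential $(\xi_0,\xi_1,\xi_2)$.

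The second step is the induction on normal derivatives. Suppose $\partial_{y^3}^k\lambda,\partial_{y^3}^k\mu,\partial_{y^3}^k\rho$ at $y^3=0$ are known for all $k<m$, together with their tangential derivatives, since those are tangential derivatives of known boundary functions. Using the recursive formula for the lower-order terms of $B$, obtained from the factorization by matching symbols order by order, we show that $\sigma_{1-m}$ has the form
\[
\sigma_{1-m}=\mathcal{L}_m\bigl(\partial_{y^3}^m\lambda,\partial_{y^3}^m\mu,\partial_{y^3}^m\rho\bigr)+\mathcal{R}_m .
\]
Here $\mathcal{L}_m$ is linear, with coefficients that are explicit functions of the principal data, and $\mathcal{R}_m$ depends only on the already recovered quantities.

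The main obstacle is showing that $\mathcal{L}_m$ is injective as a map from the three unknowns into symbols in $(\xi_0,\xi')$. For this we write the contribution of $\partial_{y^3}^m$ of the coefficients to $b_{-m}$ via the Sylvester equation $b_1 X+ X b_1 = \cdots$, which is solvable because the eigenvalues of $b_1$ have negative real part. We then test on special directions. For example, on $\xi'=0$ and in the limit $\xi_0\to0$ the matrix structure separates normal from tangential components, and the three unknowns appear with independent homogeneities in $\xi_0/|\xi'|$ through $c_P^{-2}$ and $c_S^{-2}$ factors. If the direct computation becomes unwieldy, the fallback is to compare two parameter sets with the same $\Lambda^\mathrm{lin}$. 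Their difference at the first non-matching order $m$ then yields the linearized identity $\mathcal{L}_m(\delta\lambda,\delta\mu,\delta\rho)=0$, and we check nondegeneracy by expanding in $\xi_0$ near $0$ and matching the coefficients of $\xi_0^0$ and $\xi_0^2$.
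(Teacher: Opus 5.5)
The paper gives no argument of its own for this proposition; it simply quotes \cite[Theorem 1]{rachele2000boundary}, so your outline has to stand on its own. Its architecture is the standard one, and the first step (boundary values of $\lambda,\mu,\rho$ from the principal symbol on the elliptic cone $|\xi_0|<c_S|\xi'|$) is fine. The gap is the inductive step, which is the entire content of the theorem. Injectivity of $\mathcal{L}_m$ is only asserted, and the mechanism you sketch for it does not work.

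First, $\xi'=0$ is not in the elliptic cone, since $|\xi_0|<c_S|\xi'|$ then forces the zero covector. More seriously, divide $P$ by the coefficient $M=\mathrm{diag}(\mu,\mu,\lambda+2\mu)$ of $\partial_{y^3}^2$. The principal part becomes $(M/\rho)^{-1}\bigl(-\xi_0^2+c_S^2|\xi|^2+(c_P^2-c_S^2)\,\xi\otimes\xi\bigr)$. So $b_1$ depends on $(\lambda,\mu,\rho)$ only through $c_P,c_S$, and is invariant under $(\lambda,\mu,\rho)\mapsto s(\lambda,\mu,\rho)$. Let $\mathcal{S}$ denote the Sylvester operator in your recursion. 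The term $(-\mathrm{i})^m\mathcal{S}^{-m}(\partial^m_{y^3}b_1)$ obtained by iterating it sees only $\partial^m_{y^3}(c_P^2)$ and $\partial^m_{y^3}(c_S^2)$. In particular, the direction $(\partial^m_{y^3}\lambda,\partial^m_{y^3}\mu,\partial^m_{y^3}\rho)\propto(\lambda,\mu,\rho)$ lies in its kernel.

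That direction is detected only through the first-order terms $-(\nabla\lambda)(\nabla\cdot u)-(\nabla u+\nabla^Tu)\nabla\mu$ of $P$. These enter the order-zero coefficients of the factorization. After $m-1$ normal differentiations they contribute $\partial^m_{y^3}\lambda$ and $\partial^m_{y^3}\mu$ to $b_{1-m}$ at the same order. So ``independent homogeneities through $c_P^{-2}$ and $c_S^{-2}$'' can separate at most two of your three unknowns. Your sketch never tracks the mechanism that sees the third.

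Your fallback can be made rigorous, but only in the following form.

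1. The coefficients are $t$-independent, so the recursion for the $b_j$ involves $\xi_0$ only through $\rho\xi_0^2$. It is smooth across $\xi_0=0$, which lies inside the elliptic cone.
2. On $\{\xi_0=0\}$ the full symbol of $\Lambda^{\mathrm{lin}}$ coincides with that of the static elastic DN map. Injectivity of the $\xi_0^0$-part in $(\partial^m_{y^3}\lambda,\partial^m_{y^3}\mu)$ is then exactly the static boundary-determination theorem of Nakamura--Uhlmann. That is a genuine result, where the same first-order-term issue has to be handled, not a quick coefficient match; you must cite or reprove it.
3. Once $\partial^m_{y^3}\lambda,\partial^m_{y^3}\mu$ are known, use that $\rho$ enters $P$ only through $\rho\partial_t^2$. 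Hence $\partial^m_{y^3}\rho$ appears in $\sigma_{1-m}$ only as $c\,M\mathcal{S}^{-m}(\partial_\rho b_1)\,\partial^m_{y^3}\rho$, with $c\neq0$ and $M,\mathcal{S}$ invertible.
4. Up to a factor $\pm\mathrm{i}$, the eigenvalues of $b_1$ are $\beta_\bullet=\sqrt{|\xi'|^2-c_\bullet^{-2}\xi_0^2}$, which depend on $\rho$ when $\xi_0\neq0$. So $\partial_\rho b_1\not\equiv 0$, and $\partial^m_{y^3}\rho$ is determined. This closes the induction.

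A smaller inaccuracy: in the explicit half-space formula for $\sigma_1$, the denominator is $|\xi'|^2-\beta_P\beta_S$, not a Rayleigh-type function. It vanishes at $\xi_0=0$ together with the numerator, while the Rayleigh function appears as $\det\sigma_1$. The limit $\xi_0\to0$ should therefore be taken via the factorization, where $b_1$ is smooth, not via the mode decomposition.
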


This boundary determination allows us to extend $\lambda,\mu,\rho$ smoothly from $\Omega$ to $\widetilde{\Omega}$ without knowing their values in the interior of $\Omega$. First we take a $P$ incident wave $u^+_\varrho$ concentrating near a $P$-wave null geodesic $\vartheta$ as in the beginning part of Section \ref{secreflectiongaussian}. Assume $u_\varrho$ is the total Gaussian beam solution constructed in previous subsection, and $u$ is the associated solution to the initial boundary value problem \eqref{IBVPwithreflection}. We remark here that the values of $u_\varrho^+(t,x)\vert_{R_0}$ can be decided (up to certain orders at $p_0$) since the values of $\lambda,\mu,\rho$ in $\widetilde{\Omega}\setminus\Omega$ are already known. However the value of $u_\varrho^+(t,x)$ near $p$ cannot be directly determined since it depends also on the values of $\lambda,\mu,\rho$ in the interior of $\Omega$ (by solving the Eikonal and transport equations for the phase and the amplitudes), which are unknown for the inverse problem. In this section, we use the measurements of Neumann value of $u$ near $p$ to determine $u_\varrho^+(t,x)$ near $p$.\\

In a neighborhood of $p$, $u_\varrho$ is of the form \eqref{Ptotalgaussianbeam}.  Therefore we can choose a sufficiently large $K$ such that, near $p\in(0,T)\times\partial\Omega$, the corresponding Neumann value of $u=u_\varrho+R_\varrho$ is
\begin{equation}\label{asymptoticneumann}
\varrho\mathcal{N}_1+\sum_{k=0}^{2K-1}\varrho^{-k}\mathcal{N}_{-k}+\mathcal{O}_{C^j}(\varrho^{-2K+j}),
\end{equation}
where
\[
\begin{split}
\mathcal{N}_1=&\mathrm{i}\left(\lambda(\nabla\varphi_P^+\cdot\mathbf{a}_{P,0}^+)\nu+\mu(\nabla\varphi_P^+\cdot\nu)\mathbf{a}_{P,0}^++\mu(\mathbf{a}_{P,0}^+\cdot\nu)\nabla\varphi_P^+\right)e^{\mathrm{i}\varrho\varphi_P^+}\\
&+\mathrm{i}\left(\lambda(\nabla\varphi_P^-\cdot\mathbf{a}_{P,0}^-)\nu+\mu(\nabla\varphi_P^-\cdot\nu)\mathbf{a}_{P,0}^-+\mu(\mathbf{a}_{P,0}^-\cdot\nu)\nabla\varphi_P^-\right)e^{\mathrm{i}\varrho\varphi_P^-}\\
&+\mathrm{i}\left(\lambda(\nabla\varphi_S^-\cdot\mathbf{a}_{S,0}^-)\nu+\mu(\nabla\varphi_S^-\cdot\nu)\mathbf{a}_{S,0}^-+\mu(\mathbf{a}_{S,0}^-\cdot\nu)\nabla\varphi_S^-\right)e^{\mathrm{i}\varrho\varphi_S^-},
\end{split}
\]
and
\[
\begin{split}
\mathcal{N}_{-k}=&\Big(\mathrm{i}\lambda(\nabla\varphi_P^+\cdot\mathbf{a}_{P,k+1}^+)\nu+\mathrm{i}\mu(\nabla\varphi_P^+\cdot\nu)\mathbf{a}_{P,k+1}^++\mathrm{i}\mu(\mathbf{a}_{P,k+1}^+\cdot\nu)\nabla\varphi_P^+\\
&\quad\quad\quad\quad\quad\quad\quad\quad\quad\quad+\lambda(\nabla\cdot\mathbf{a}^+_{P,k})\nu+\mu(\nabla\mathbf{a}^+_{P,k}+\nabla^T\mathbf{a}^+_{P,k})\nu\Big)e^{\mathrm{i}\varrho\varphi_P^+}\\
+&\Big(\mathrm{i}\lambda(\nabla\varphi_P^-\cdot\mathbf{a}_{P,k+1}^-)\nu+\mathrm{i}\mu(\nabla\varphi_P^+\cdot\nu)\mathbf{a}_{P,k+1}^-+\mathrm{i}\mu(\mathbf{a}_{P,k+1}^-\cdot\nu)\nabla\varphi_P^-\\
&\quad\quad\quad\quad\quad\quad\quad\quad\quad\quad+\lambda(\nabla\cdot\mathbf{a}^-_{P,k})\nu+\mu(\nabla\mathbf{a}^-_{P,k}+\nabla^T\mathbf{a}^-_{P,k})\nu\Big)e^{\mathrm{i}\varrho\varphi_P^-}\\
+&\Big(\mathrm{i}\lambda(\nabla\varphi_P^-\cdot\mathbf{a}_{S,k+1}^-)\nu+\mathrm{i}\mu(\nabla\varphi_S^+\cdot\nu)\mathbf{a}_{S,k+1}^-+\mathrm{i}\mu(\mathbf{a}_{S,k+1}^-\cdot\nu)\nabla\varphi_S^-\\
&\quad\quad\quad\quad\quad\quad\quad\quad\quad\quad+\lambda(\nabla\cdot\mathbf{a}^-_{S,k})\nu+\mu(\nabla\mathbf{a}^-_{S,k}+\nabla^T\mathbf{a}^-_{S,k})\nu\Big)e^{\mathrm{i}\varrho\varphi_S^-}.
\end{split}
\]
Note that for each $k$
\[
\mathcal{N}_{-k}=\mathcal{M}_{-k}e^{\mathrm{i}\varrho\varphi_P^+},\quad \text{to order }2K+1\text{ at }p.
\]
Here
\begin{align*}
\mathcal{M}_{-k}=&\Big(\mathrm{i}\lambda(\nabla\varphi_P^+\cdot\mathbf{a}_{P,k+1}^+)\nu+\mathrm{i}\mu(\nabla\varphi_P^+\cdot\nu)\mathbf{a}_{P,k+1}^++\mathrm{i}\mu(\mathbf{a}_{P,k+1}^+\cdot\nu)\nabla\varphi_P^+\\
&\quad\quad\quad\quad\quad\quad\quad\quad\quad\quad+\lambda(\nabla\cdot\mathbf{a}^+_{P,k})\nu+\mu(\nabla\mathbf{a}^+_{P,k}+\nabla^T\mathbf{a}^+_{P,k})\nu\Big)\\
+&\Big(\mathrm{i}\lambda(\nabla\varphi_P^-\cdot\mathbf{a}_{P,k+1}^-)\nu+\mathrm{i}\mu(\nabla\varphi_P^+\cdot\nu)\mathbf{a}_{P,k+1}^-+\mathrm{i}\mu(\mathbf{a}_{P,k+1}^-\cdot\nu)\nabla\varphi_P^-\\
&\quad\quad\quad\quad\quad\quad\quad\quad\quad\quad+\lambda(\nabla\cdot\mathbf{a}^-_{P,k})\nu+\mu(\nabla\mathbf{a}^-_{P,k}+\nabla^T\mathbf{a}^-_{P,k})\nu\Big)\\
+&\Big(\mathrm{i}\lambda(\nabla\varphi_P^-\cdot\mathbf{a}_{S,k+1}^-)\nu+\mathrm{i}\mu(\nabla\varphi_S^+\cdot\nu)\mathbf{a}_{S,k+1}^-+\mathrm{i}\mu(\mathbf{a}_{S,k+1}^-\cdot\nu)\nabla\varphi_S^-\\
&\quad\quad\quad\quad\quad\quad\quad\quad\quad\quad+\lambda(\nabla\cdot\mathbf{a}^-_{S,k})\nu+\mu(\nabla\mathbf{a}^-_{S,k}+\nabla^T\mathbf{a}^-_{S,k})\nu\Big),
\end{align*}
where $\mathbf{a}^+_{P,-1}=\mathbf{a}^-_{P,-1}=\mathbf{a}^-_{S,-1}=0$.

Consider the values of $\mathcal{N}_{-k}$ at $p$ and note that $e^{\mathrm{i}\varrho\varphi_P^+(p)}=e^{\mathrm{i}\varrho\varphi_P^-(p)}=e^{\mathrm{i}\varrho\varphi_S^-(p)}=1$. Collecting terms of same orders in $\varrho$, we can recover
\[
\mathcal{M}_1(p),\,\,\mathcal{M}_0(p),\,\,\mathcal{M}_{-1}(p),\cdots, \mathcal{M}_{-2K+1}(p).
\]

We use the condition that $\varphi_P^+=\varphi_P^-=\varphi_S^-$ at $p$ up to order $2K+1$.
Taking tangential derivative of \eqref{asymptoticneumann} at point $p$, collecting the leading order terms (in $\varrho$) and noticing
\[
\varrho\partial_\beta\mathcal{N}_1(p)=-\varrho^2\partial_\beta\varphi_P^+(p)\mathcal{M}_1(p)+\mathcal{O}(\varrho),
\]
we can determine $\partial_\beta\varphi_P^+(p)=\xi_\beta$ for $\beta=0,1,2$.
Then one can determine
\[
\begin{split}
\partial_3\varphi_P^+(p)&=\xi_3=\sqrt{c_P^{-2}\xi_0^2-\xi_1^2-\xi_2^2},\\
\partial_3\varphi_P^-(p)&=-\xi_3=-\sqrt{c_P^{-2}\xi_0^2-\xi_1^2-\xi_2^2},\\
\partial_3\varphi_S^-(p)&=-\xi_{S,3}=-\sqrt{c_S^{-2}\xi_0^2-\xi_1^2-\xi_2^2}.
\end{split}
\] 
Then we can determine tangential derivatives
\[
\partial_\beta\mathcal{M}_1(p),\,\,\partial_\beta\mathcal{M}_0(p),\,\,\partial_\beta\mathcal{M}_{-1}(p),\cdots, \partial_\beta\mathcal{M}_{-2K+2}(p).
\]
Successively taking higher order tangential derivatives and using the Eikonal equations for $\varphi_P^+,\varphi_P^-,\varphi_S^-$, we can determine
\[
\varphi_P^+,\,\,\varphi_P^-,\,\,\varphi_S^-
\]
at point $p$ up to order $2K+1$. Also we can determine the derivatives of $\mathcal{M}_{-k}$ at $p$ up to order $2K-k-1$.\\

Next we determine $\mathbf{a}_{P,0}^+(p)$. 
We can write
\begin{equation}\label{systemprincipalneumann}
\begin{split}
\frac{\mathcal{M}_1(p)}{\mathrm{i}}=&\left(\begin{array}{ccc}-\mu \xi_3&0 & \mu \xi_1\\
0 &-\mu \xi_3 &\mu\xi_2\\
\lambda \xi_1 &\lambda \xi_2&-(\lambda+2\mu) \xi_3
\end{array}\right)\left(\begin{array}{c}
\xi_1\\
\xi_2\\
-\xi_3
\end{array}\right)A_P^-\\
&+\left(\begin{array}{ccc}-\mu \xi_{S,3}&0 & \mu \xi_1\\
0 &-\mu \xi_{S,3} &\mu \xi_2\\
\lambda \xi_1 &\lambda \xi_2&-(\lambda+2\mu) \xi_{S,3}
\end{array}\right)\left(\begin{array}{c}
a_{S,01}^-\\
a_{S,02}^-\\
a_{S,03}^-
\end{array}\right)\\
&+\left(\begin{array}{ccc}\mu \xi_3&0 & \mu \xi_1\\
0 &\mu \xi_3 &\mu \xi_2\\
\lambda \xi_1 &\lambda \xi_2&(\lambda+2\mu) \xi_3
\end{array}\right)\left(\begin{array}{c}
\xi_1\\
\xi_2\\
\xi_3
\end{array}\right)A_{P}^+.
\end{split}
\end{equation}
To show that $\mathcal{M}_1(p)$ determines $A_P^+$ uniquely by \eqref{systemprincipalreflection} and \eqref{systemprincipalneumann}, we suppose $\mathcal{N}_1(p)=0$. Without loss of generality, we assume $\xi_1\neq 0$. Then use the identities
\begin{align*}
\xi_1A_P^-+a^-_{S,01}+\xi_1A_P^+=0,\\
-\xi_3A_P^-+a^-_{S,03}+\xi_3A_P^+=0,\\
-2\mu\xi_1\xi_3A_P^--\mu\xi_{S,3}a^-_{S,01}+\mu\xi_1a^-_{S,03}+2\mu\xi_1\xi_3A^+_P=0,\\
(\lambda(\xi_1^2+\xi_2^2+\xi_3^2)+2\mu\xi_3^2)A_P^--2\mu\xi_{S,3}a^-_{S,03}+(\lambda(\xi_1^2+\xi_2^2+\xi_3^2)+2\mu\xi_3^2)A^+_P=0,
\end{align*}
resulted from \eqref{systemprincipalreflection} and \eqref{systemprincipalneumann}. The above equations can be reduced to
\[
\begin{split}
\mu\xi_{S,3}a^-_{S,01}+\mu\xi_1a^-_{S,03}=0,\\
(\lambda c_P^{-2}\xi_0^2+2\mu\xi_3^2)a^-_{S,01}+2\mu\xi_1\xi_{S,3}a^-_{S,03}=0.
\end{split}
\]
For the above linear system we calculate the determinant of the coefficient matrix
\[
\det\left(\begin{array}{cc}
\mu\xi_{S,3}&\mu\xi_1\\
(\lambda c_P^{-2}\xi_0^2+2\mu\xi_3^2)&2\mu\xi_1\xi_{S,3}
\end{array}\right)=\rho\mu\xi_1\xi_0^2\neq 0.
\]
Then $a^-_{S,01}=a^-_{S,03}=0$, and consequently we conclude that $A_P^+=0$. Therefore the value of $\mathcal{N}_1(p)$ determines $\mathbf{a}_{P,0}^+(p)$.

By similar approach as above, we can determine $\partial_\beta A_{P,0}^+(p)$, $\beta=0,1,2$, from $\partial_\beta\mathcal{M}_1(p)$, and then $\partial_3A_{P,0}^+(p)$ by the transport equation for $A_{P,0}^+(p)$. 
So successively one can determine the derivatives of $A_{P,0}^+$ at $p$ up to order $2K$. Then one can determine $\mathbf{a}_{P,1}^+-\frac{\langle \nabla\varphi_P^+,\mathbf{a}_{P,1}^+\rangle \nabla\varphi_P^+}{|\nabla\varphi_P^+|^2}$ at point $p$ to order $2K-1$ by the construction of Gaussian beam solutions. Then similar as above, one can determine the derivatives of $A_{P,1}^+:=\frac{\langle \nabla\varphi_P^+,\mathbf{a}_{P,1}^+\rangle }{|\nabla\varphi_P^+|^2}$ from the derivatives of $\mathcal{M}_0$ at $p$ to order $2K-1$.
Continuing with this, one can determine the derivatives of $\mathbf{a}_{P,k}^+$ at $p$ up to order $2K-k$ for $k=1,2,\dots, 2K$. \\

So we can recover a truncation of the incident $P$-wave Gaussian beam $u_\varrho^+$ in \eqref{Pincidentwave}, still denoted by 
\[
u_\varrho=\sum_{k=0}^{2K}\varrho^{-k}\mathbf{a}_{P,k}^+e^{\mathrm{i}\varrho\varphi_P^+}.
\]
Here we know the derivatives of $\varphi_P^+$ at both $\vartheta(t_0)$ and $\vartheta(t_1)$ up to order $2K+1$, and the derivatives of $\mathbf{a}_{P,k}^+$ up to order $2K-k$. So we truncate off the higher order terms. We remark here that one can take $K$ to be arbitrarily large by taking $N$ sufficiently large. For any $K'>0$, we take $K$ large enough such that
\[
\|Pu_\varrho\|_{H^k}\leq C\varrho^{-K'+k}.
\]
So one can design boundary sources $f_\varrho=u_\varrho\vert_{(0,T)\times\partial\Omega}$ without knowing the values of $\lambda,\mu,\rho$ in the interior. Let $u$ to be the solution of the initial boundary value problem,
\begin{equation}
\begin{cases}
\rho\frac{\partial^2u}{\partial t^2}-\nabla\cdot \widetilde{\sigma}(u)=0,\quad &(t,x)\in (0,T)\times\Omega,\\
u(t,x)=f_\varrho(t,x),\quad &(t,x)\in (0,T)\times\partial \Omega,\\
u(0,x)=\frac{\partial}{\partial t}u(0,x)=0,\quad &x\in \Omega.
\end{cases}
\end{equation}
Then 
\[
\|u-u_\varrho\|_{H^{k+1}((0,T)\times\Omega)}\leq C\varrho^{-K'+k}.
\]

\begin{figure}[htbp]
\centering
\includegraphics[width=0.3\textwidth]{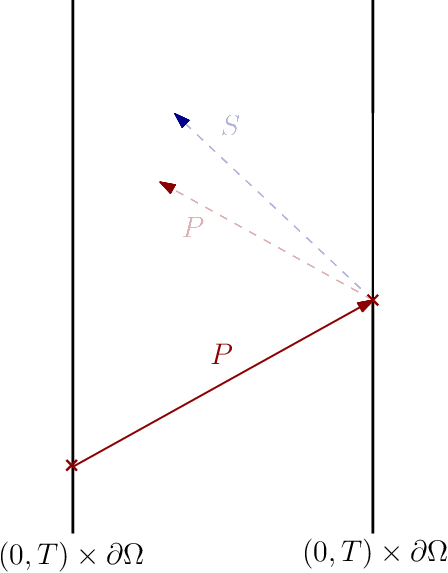}
\hspace{7em}
\includegraphics[width=0.3\textwidth]{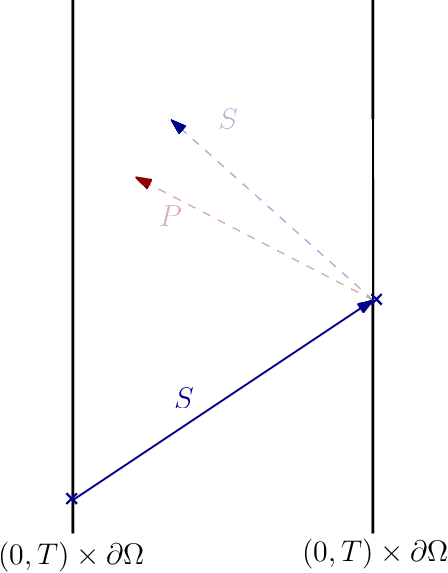}
\caption{Removing the boundary reflections of Gaussian beams}
\label{GBwithoutrefction}
\end{figure}

\begin{remark}
Generally speaking, one can first send a Gaussian beam into the domain and find the first reflection point at the boundary. According to the measurements near the first reflection point one can design boundary sources to remove the reflection up to a lower order level.
\end{remark}

\subsection{Scattering control for $S$-waves}
Now consider an incident $S$-wave, then near the first reflection point $p$, the total Gaussian beam takes the form
\[
\begin{split}
u_\varrho^++u_\varrho^-:=&\sum_{k=0}^N\varrho^{-k}\mathbf{a}_{S,k}^+e^{\mathrm{i}\varrho\varphi_S^+}+\sum_{k=0}^N\varrho^{-k}\mathbf{a}_{P,k}^-e^{\mathrm{i}\varrho\varphi_P^-}+\sum_{k=0}^N\varrho^{-k}\mathbf{a}_{S,k}^-e^{\mathrm{i}\varrho\varphi_S^-}.
\end{split}
\]
Here
\[
\begin{split}
\mathcal{M}_1=&\mathrm{i}\left(\lambda(\nabla\varphi_S^+\cdot\mathbf{a}_{S,0}^+)\nu+\mu(\nabla\varphi_S^+\cdot\nu)\mathbf{a}_{S,0}^++\mu(\mathbf{a}_{S,0}^+\cdot\nu)\nabla\varphi_S^+\right)\\
&+\mathrm{i}\left(\lambda(\nabla\varphi_P^-\cdot\mathbf{a}_{P,0}^-)\nu+\mu(\nabla\varphi_P^-\cdot\nu)\mathbf{a}_{P,0}^-+\mu(\mathbf{a}_{P,0}^-\cdot\nu)\nabla\varphi_P^-\right)\\
&+\mathrm{i}\left(\lambda(\nabla\varphi_S^-\cdot\mathbf{a}_{S,0}^-)\nu+\mu(\nabla\varphi_S^-\cdot\nu)\mathbf{a}_{S,0}^-+\mu(\mathbf{a}_{S,0}^-\cdot\nu)\nabla\varphi_S^-\right).
\end{split}
\]
The phase function $\varphi_S^+$ up to order $2K+1$ at point $p$ can be determined in a similar way as for incident $P$-waves.

Now we have
\[
\xi_1a_{S,01}^++\xi_2a_{S,02}^++\xi_3a_{S,03}^+=0,
\]
and
\begin{align*}
&\left(\begin{array}{ccc}-\mu \xi_{P,3}&0 & \mu \xi_1\\
0 &-\mu \xi_{P,3} &\mu\xi_2\\
\lambda \xi_1 &\lambda \xi_2&-(\lambda+2\mu) \xi_{P,3}
\end{array}\right)\left(\begin{array}{c}
\xi_1\\
\xi_2\\
-\xi_{P,3}
\end{array}\right)A_P^-\\
&+\left(\begin{array}{ccc}-\mu \xi_{3}&0 & \mu \xi_1\\
0 &-\mu \xi_{3} &\mu \xi_2\\
\lambda \xi_1 &\lambda \xi_2&-(\lambda+2\mu) \xi_{3}
\end{array}\right)\left(\begin{array}{c}
a_{S,01}^-\\
a_{S,02}^-\\
a_{S,03}^-
\end{array}\right)\\
&+\left(\begin{array}{ccc}\mu \xi_3&0 & \mu \xi_1\\
0 &\mu \xi_3 &\mu \xi_2\\
\lambda \xi_1 &\lambda \xi_2&(\lambda+2\mu) \xi_3
\end{array}\right)\left(\begin{array}{c}
a_{S,01}^+\\
a_{S,02}^+\\
a_{S,03}^+
\end{array}\right)=\mathcal{M}_1(p).
\end{align*}
Suppose $\mathcal{M}_1(p)=0$. Then by above identities together with \eqref{systemprincipalreflectionS} we have
\[
\begin{split}
\xi_1A_P^-+a_{S,01}^-+a_{S,01}^+=0,\\
\xi_2A_P^-+a_{S,02}^-+a_{S,02}^+=0,\\
-\xi_{P,3}A^-_P+a_{S,03}^-+a_{S,03}^+=0,\\
-2\mu\xi_1\xi_{P,3}A_P^--\mu\xi_3a_{S,01}^-+\mu\xi_1a^-_{S,03}+\mu\xi_3a_{S,01}^++\mu\xi_1a^+_{S,03}=0,\\
-2\mu\xi_2\xi_{P,3}A_P^--\mu\xi_3a_{S,02}^-+\mu\xi_2a^-_{S,03}+\mu\xi_3a_{S,02}^++\mu\xi_2a^+_{S,03}=0,\\
(\lambda c_P^{-2}+2\mu\xi_{P,3}^2)A_P^--2\mu\xi_3a_{S,03}^-+2\mu\xi_3a_{S,03}^+=0.
\end{split}
\]
We can deduce the following identities from above equations
\[
\begin{split}
(\xi_1^2+\xi_2^2)A_P^-+\xi_3(a_{S,03}^--a_{S,03}^+)=0,\\
(\lambda c_P^{-2}\xi_0^2+2\mu\xi_{P,3}^2)A_P^--2\mu\xi_3(a_{S,03}^--a_{S,03}^+)=0.
\end{split}
\]
The above linear system we calculate the determinant of the coefficient matrix
\[
\det\left(\begin{array}{cc}
(\xi_1^2+\xi_2^2)&\xi_3\\
\lambda c_P^{-2}\xi_0^2+2\mu\xi_{P,3}^2&-2\mu\xi_3
\end{array}\right)=-\xi_3\rho\xi_0^2\neq 0.
\]
Therefore $A_P^-=a_{S,03}^--a_{S,03}^+=0$. Then $a_{S,03}^-+a_{S,03}^+=0$, and followed by $a_{S,01}^-\pm a_{S,01}^+=0$ and $a_{S,02}^-\pm a_{S,02}^+=0$. Now we can conclude that
\[
A_P^-=a_{S,01}^-=a_{S,01}^+=a_{S,02}^-=a_{S,02}^+=a_{S,03}^-=a_{S,03}^+=0.
\]
Therefore $\mathcal{M}_1(p)$ determines $\mathbf{a}_{S,0}^+$ at point $p$. Similar as the $P$-waves, we can determine the derivatives of $\mathbf{a}_{S,k}^+$ at $p$ up to order $2K-k$ for $k=1,2,\dots, 2K$. The rest discussion is also similar.\\

From now on we can use Gaussian beams ignoring reflections at the boundary.


\section{The linear Dirichlet-to-Neumann map and lens relations}\label{linearequationresult}

Let us first recall the notion of \textit{lens relation} of a non-trapping Riemannian manifold $(M,g)$ with smooth boundary $\partial M$,
\[
\begin{split}
\mathcal{L}=&\{(x,\xi,y,\eta,t)\in SM\times SM\times[0,\infty)\vert (x,\xi)\in\partial_+SM,(y,\eta)\in\partial_-SM,t=\tau(x,\xi)\\
&\quad\quad\quad\quad\quad\quad\quad (y,\eta)=(\gamma_{x,\xi}(\tau(x,\xi),\dot{\gamma}_{x,\xi}(\tau(x,\xi))\}.
\end{split}
\]
Here
\[
\begin{split}
&\partial_+ SM=\{(x,v)\in SM,x\in\partial M,\langle v,\nu(x)\rangle_g> 0\},\\
&\partial_- SM=\{(x,v)\in SM,x\in\partial M,\langle v,\nu(x)\rangle_g< 0\}.
\end{split}
\]
The \textit{lens rigidity problem} asks whether one can recover $g$ (up to a diffeomorphism fixing the boundary) from the lens data. A landmark result was recently established in \cite{stefanov2021local}. For metrics in the same conformal class, which is more relevant to our problem, the uniqueness of the conformal factor has been proved under the simplicity condition \cite{michel1981rigidite} and the foliation condition \cite{stefanov2016boundary}.\\

One can show that the linear DtN map $\Lambda^\mathrm{lin}$ determines the lens relations for both $(\Omega,g_P)$ and $(\Omega,g_S)$, see \cite{hansen2003propagation}. Here we give a different proof inspired by the proof of \cite[Theorem 3.1]{yi2026leakage} and we hope it is of independent interest. Since we want to prove the uniqueness of $c_{P/S}$ under the less restrictive non-trapping condition, we will not apply the known lens rigidity results in \cite{michel1981rigidite} and \cite{stefanov2016boundary}. We start with a lemma proved in \cite{yi2026leakage}, which is essentially derived by the Blagovestchenskii identity (cf. \cite[Lemma 4.15]{kachalov2001inverse}).
\begin{lemma}{\textnormal{(}\cite[Lemma 6.2]{yi2026leakage}\textnormal{)}}
The $\rho$-weighted inner product
\[
\int_\Omega u^{(1)}(\frac{T}{2},x)\cdot u^{(2)}(\frac{T}{2},x)\rho(x)\mathrm{d}x
\]
can be determined from $\Lambda^\mathrm{lin}$. Here $u^{(j)}, j=1,2$ is the solution to \eqref{linear_eqj}.
\end{lemma}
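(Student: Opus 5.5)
The plan is to prove the Blagoveščenskiĭ-type identity. I would show that $W(t,s):=\int_\Omega u^{(1)}(t,x)\cdot u^{(2)}(s,x)\rho(x)\,\mathrm{d}x$ satisfies a one-dimensional wave equation in $(t,s)$, with a source built only from boundary data and $\Lambda^\mathrm{lin}$. I would then solve this equation explicitly and evaluate it at $t=s=T/2$.

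First I differentiate twice under the integral and use $\rho\partial_t^2u^{(j)}=\nabla\cdot\widetilde{\sigma}(u^{(j)})$:
\[
(\partial_t^2-\partial_s^2)W(t,s)=\int_\Omega\Big(\nabla\cdot\widetilde{\sigma}(u^{(1)})(t)\cdot u^{(2)}(s)-u^{(1)}(t)\cdot\nabla\cdot\widetilde{\sigma}(u^{(2)})(s)\Big)\mathrm{d}x .
\]
Next I integrate by parts once in each term. The bulk terms are $\widetilde{\sigma}(u^{(1)}):\nabla u^{(2)}$ and $\widetilde{\sigma}(u^{(2)}):\nabla u^{(1)}$. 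Each equals the symmetric form $\lambda(\nabla\cdot u^{(1)})(\nabla\cdot u^{(2)})+2\mu\,\widetilde{\varepsilon}(u^{(1)}):\widetilde{\varepsilon}(u^{(2)})$, so they cancel. This symmetry of the isotropic stress (the self-adjointness of the elastic operator) is the key algebraic point. What remains is
\[
(\partial_t^2-\partial_s^2)W(t,s)=\int_{\partial\Omega}\Big(\Lambda^\mathrm{lin}f^{(1)}(t)\cdot f^{(2)}(s)-f^{(1)}(t)\cdot\Lambda^\mathrm{lin}f^{(2)}(s)\Big)\mathrm{d}S=:F(t,s),
\]
since $u^{(j)}=f^{(j)}$ on $\partial\Omega$ and $\nu\cdot\widetilde{\sigma}(u^{(j)})=\Lambda^\mathrm{lin}f^{(j)}$. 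The right-hand side $F$ is therefore known from the data.

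For the initial conditions, the zero Cauchy data at $t=0$ give $W(0,s)=\partial_tW(0,s)=0$ for all $s\in(0,T)$. I then solve this Cauchy problem for the 1D wave equation in the variable $t$ by d'Alembert's formula:
\[
W(t,s)=\frac12\int_0^t\int_{s-(t-t')}^{s+(t-t')}F(t',s')\,\mathrm{d}s'\,\mathrm{d}t'.
\]
At $(t,s)=(T/2,T/2)$ the backward characteristic triangle has base $s'\in[0,T]$ at $t'=0$. Hence the triangle lies in $[0,T/2]\times[0,T]$, where $F$ is determined by $\Lambda^\mathrm{lin}$ on $(0,T)\times\partial\Omega$. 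The stated value is therefore determined.

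The main obstacle is justification rather than algebra. I need enough regularity to differentiate under the integral and apply the divergence theorem, which holds for $f^{(j)}\in C_c^\infty$ by standard hyperbolic theory. I also need to ensure that $F$ is only ever evaluated at times in $[0,T]$. This is exactly why the point $T/2$ is chosen: it is the largest time for which the characteristic triangle stays within the measured time window. If needed, I would extend $W$ by zero for negative times to make the d'Alembert representation clean. A general $f^{(j)}$ is handled by density.
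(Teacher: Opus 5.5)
Your argument is correct and is essentially the paper's route: the paper does not write out a proof, but cites \cite[Lemma 6.2]{yi2026leakage} as a consequence of the Blagovestchenskii identity \cite[Lemma 4.15]{kachalov2001inverse}, and your proposal is that derivation. The symmetry $\widetilde{\sigma}(u^{(1)}):\nabla u^{(2)}=\widetilde{\sigma}(u^{(2)}):\nabla u^{(1)}$ removes the unknown interior terms, and the characteristic triangle over $(T/2,T/2)$ has base $[0,T]$, so only $\Lambda^\mathrm{lin}$ on $(0,T)\times\partial\Omega$ is needed.
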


\begin{theorem}
Assume $(\Omega,g_\bullet)$ is non-trapping and $\partial\Omega$ is convex with respect to $g_\bullet$ for either $\bullet=P,S$ and $T>2\max\{\mathrm{diam}(\Omega,g_P),\mathrm{diam}(\Omega,g_S)\}$. Then $\Lambda^\mathrm{lin}$ uniquely determines the lens relation of $(\Omega,g_P)$ and $(\Omega,g_S)$.
\end{theorem}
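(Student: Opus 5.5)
The plan is to use the Blagovestchenskii-type lemma (\cite[Lemma 6.2]{yi2026leakage}) together with the reflection-free Gaussian beams of Section \ref{gaussianwithoutreflection}. That lemma lets us compute inner products of solutions at the middle time $T/2$. These inner products detect whether two beams sent in from the boundary are located at the same phase-space point at time $T/2$. By reversing time, such coincidences encode exactly where and in which direction a geodesic leaves the domain.

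\textbf{Step 1: the incoming family.} Fix $\bullet\in\{P,S\}$, an incoming point $(x,\xi)$ on the boundary (inward-pointing, in the paper's sign convention), and an entry time $t_0$ with $t_0<T/2<t_0+\tau(x,\xi)$. The entry data are known from the boundary: by the boundary determination proposition of \cite{rachele2000boundary}, the parameters and all their derivatives on $\partial\Omega$ are determined, so the phase and amplitude jets at the entry point $(t_0,x)$ are computable. With the scattering control of Section \ref{gaussianwithoutreflection}, we produce Dirichlet data $f_\varrho$ whose solution $u^{(1)}_\varrho$ equals, up to $O(\varrho^{-K'})$, a single $\bullet$-beam along $\vartheta(t)=(t,\gamma_{x,\xi}(t-t_0))$ for $t\le T/2$. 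Convexity and non-trapping guarantee that the beam reaches the boundary again at a finite time $t_0+\tau(x,\xi)$.

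\textbf{Step 2: the test family.} Take a candidate exit point: boundary data $(y,\eta)$ on $\partial_-S\Omega$ together with a candidate exit time $t_1$. Build a second beam $u^{(2)}_\varrho$ as the time reversal of an incoming beam. Since the equation is time symmetric, we send from $(t'_0,y)$ with $t'_0=T-t_1$ a beam along $\gamma_{y,-\eta}$, reflection-free by the same scattering control. Its value at time $T/2$ is concentrated near $\gamma_{y,-\eta}(T/2-t'_0)$, with covector direction $-\dot\gamma$. Combine it with complex conjugation or the opposite sign of $\varrho$, so that we form a solution concentrated at the same point as the reversed exit beam.

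\textbf{Step 3: the pairing.} By the lemma, the quantity
\[
I_\varrho=\int_\Omega u^{(1)}_\varrho(T/2,x)\cdot \overline{u^{(2)}_\varrho(T/2,x)}\,\rho\,\mathrm{d}x
\]
is known. Since $u^{(2)}$ is real-linear in the data, we can take real and imaginary parts of the data separately. A stationary-phase computation in Fermi coordinates gives the following dichotomy:
\begin{itemize}
\item $I_\varrho=O(\varrho^{-\infty})$, or at least $o(\varrho^{-3/2})$ after normalization, unless the two beams sit at the same point with the same covector at $t=T/2$;
\item in the coincident case $I_\varrho\sim c\,\varrho^{-3/2}$ with $c\neq0$.
\end{itemize}
Non-degeneracy of $c$ follows from $\Im H>0$, from $\det Y\neq0$ via \eqref{identityHY}, and from nonvanishing amplitudes. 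Coincidence at $T/2$ means, by uniqueness of geodesics, that $\gamma_{y,-\eta}$ is the reversal of $\gamma_{x,\xi}$, so $(y,\eta,t_1-t_0)$ is the lens image of $(x,\xi)$.

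\textbf{Step 4: separating the two modes.} We must not confuse $P$ and $S$ beams. A $P$-beam and an $S$-beam at the same point with the same direction have covectors of different length, since $\partial_t\varphi$ agrees but $|\nabla\varphi|$ differs ($c_P>c_S$). Their pairing is therefore non-stationary and negligible, so the $P$ and $S$ lens relations are recovered separately. The hypothesis $T>2\,\mathrm{diam}$ is what allows both $t_0\ge0$ and $t_1\le T$ with $T/2$ in between for every geodesic: we choose $t_0=T/2-\tau/2$.

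\textbf{Main obstacle.} The hardest part will be rigor in Step 3: showing that the pairing truly vanishes to high order unless the phase-space points coincide. This has to hold even when the geodesic self-intersects in $\Omega$, where the spatial supports overlap. There I would use the difference of the gradients of the phases: $\nabla\varphi^{(1)}-\nabla\varphi^{(2)}\neq0$ at overlapping points allows integration by parts in $x$ with a Gaussian weight. A further point is that the scattering-control remainder is only $O(\varrho^{-K'})$, and $K'$ must be chosen larger than the leading order $3/2$ of $I_\varrho$.
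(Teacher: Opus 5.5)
Your proposal is correct and is essentially the paper's proof. Reflection-free Gaussian beams entering from the two ends of the candidate geodesic are paired at $t=T/2$ through the Blagovestchenskii-type lemma; this gives a nonzero $\varrho^{-3/2}$ stationary-phase limit exactly when the beams meet at one point with opposite directions, and a negligible contribution otherwise. Your Hermitian pairing against the flipped (opposite-sign $\varrho$) second beam is just the paper's bilinear pairing $\int_\Omega u^{(1)}\cdot u^{(2)}\rho\,\mathrm{d}x$ of two $\varrho>0$ beams with $\nabla(\varphi^{(1)}+\varphi^{(2)})(T/2,x_q)=0$. Make sure the flip and the conjugation net out this way: pairing Hermitianly against the unflipped beam gives phase gradient $2\dot\gamma^\flat\neq0$ at the meeting point and detects nothing.

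Step 4 and the self-intersection worry are unnecessary, because at a fixed time each reflection-free beam is a single-mode wave packet localized in a $\delta$-ball around one point. For $S$-waves, however, you should vary the initial polarization of the second beam so that $\mathbf{a}^{(1)}_0\cdot\mathbf{a}^{(2)}_0\neq0$ at the meeting point.
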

\begin{proof}
First take $(x_1,\xi_1),(x_2,\xi_2)\in\partial_+S^P\Omega$, $t_1,t_2\in (0,T)$ and denote
\[
\vartheta^{(1)}(t)=(t,\gamma_{x_1,\xi_1}^P(t-t_1)),\quad \vartheta^{(2)}(t)=(t,\gamma_{x_2,\xi_2}^P(t-t_2)).
\]
Then $\vartheta^{(1)}$ and $\vartheta^{(1)}$ are both null geodesics in $((0,T)\times\Omega,-\mathrm{d}t^2+g_P)$.
Here and throughout the paper, we add superscript $P/S$ to indicate it is a geometrical object with respect to $g_{P/S}$.
Construct $P$-wave Gaussian beam solutions, without reflections as constructed in Section \ref{gaussianwithoutreflection}, of the forms,
\[
\begin{split}
&u_{\varrho}^{(1)}=\sum_{k=0}^{2K}\varrho^{-k}\mathbf{a}^{(1)}_{P,k}e^{\mathrm{i}\varrho\varphi_P^{(1)}},\\
&u_{\varrho}^{(2)}=\sum_{k=0}^{2K}\varrho^{-k}\mathbf{a}^{(2)}_{P,k}e^{\mathrm{i}\varrho\varphi_P^{(2)}},
\end{split}
\]
concentrating near $\vartheta^{(1)}$ and $\vartheta^{(2)}$ respectively, with $\varrho>0$.

If $(x_1,\xi_1,x_2,-\xi_2,T-t_1-t_2)\in\mathcal{L}^P$ then $\gamma_{x_1,\xi_1}$ and $\gamma_{x_2,\xi_2}$ coincide (while $\vartheta^{(1)}$ and $\vartheta^{(2)}$ do not). We assume that $t_1,t_2\in(0,\frac{T}{2})$, and $\vartheta^{(1)}(\frac{T}{2})=\vartheta^{(2)}(\frac{T}{2})=(\frac{T}{2},x_q)$.  
Using the integral identity, we obtain
\[
\begin{split}
&\varrho^{3/2}\int_{\Omega}u^{(1)}(\frac{T}{2},x)\cdot u^{(2)}(\frac{T}{2},x)\rho(x)\mathrm{d}x\\
=&\varrho^{3/2}\int_{\Omega}\chi^{(1)}(\frac{T}{2},x)\chi^{(2)}(\frac{T}{2},x)\rho(x)A_P^{(1)}A_P^{(2)}\nabla\varphi^{(1)}(\frac{T}{2},\cdot)\cdot\nabla\varphi^{(2)}(\frac{T}{2},\cdot)e^{\mathrm{i}\varrho(\varphi^{(1)}(\frac{T}{2},\cdot)+\varphi^{(2)}(\frac{T}{2},\cdot))}\mathrm{d}x+\mathcal{O}(\varrho^{-1}).
\end{split}
\]
One can construct the Gaussian beam solutions such that 
\[
\partial_t\varphi^{(1)}(\frac{T}{2},x_q)=\partial_t\varphi^{(2)}(\frac{T}{2},x_q),\quad\nabla(\varphi^{(1)}+\varphi^{(2)})(\frac{T}{2},x_q)=0
\]
and
\[
\Im(\varphi^{(1)}+\varphi^{(2)})(\frac{T}{2},x)\geq c|x-x_q|^2.
\]
 Using the identity \eqref{identityHY}, we get
\[
\lim_{\varrho\rightarrow+\infty}\varrho^{3/2}e^{-\mathrm{i}\varrho(\varphi^{(1)}(\frac{T}{2},x_q)+\varphi^{(2)}(\frac{T}{2},x_q))}\int_{\Omega}u^{(1)}(\frac{T}{2},x)\cdot u^{(2)}(\frac{T}{2},x)\rho(x)\mathrm{d}x=c_0\neq 0,
\]
by applying the method of stationary phase. 

On the contrary, if $(x_1,\xi_1,x_2,-\xi_2,T-t_1-t_2)\notin\mathcal{L}^P$, then $\gamma^{(1)}$ and $\gamma^{(2)}$ do not intersect or intersect transversely. Then by the principle of non-stationary phase, 
\[
\lim_{\varrho\rightarrow+\infty}\varrho^{3/2}\int_{\Omega}u^{(1)}(\frac{T}{2},x)\cdot u^{(2)}(\frac{T}{2},x)\rho(x)\mathrm{d}x=0,
\]
In conclusion, the above argument shows that $\Lambda^\mathrm{lin}$ determines the lens data associated with $g_P$. Similarly one can use $S$-wave Gaussan beams to prove that $\Lambda^\mathrm{lin}$ determines the lens data associated with $g_S$.

\end{proof}

For a Riemannian manifold $(M,g)$ with boundary $\partial M$, we denote
\[
\mu_1(z)=\tau(z,\nu(z)),\quad z\in\partial M,
\]
where $\nu(z)$ is the unit interior normal vector to $\partial M$ at $z$. Since the lens relations for $(\Omega,g_P)$ and $(\Omega,g_S)$ are both known, we can determine $\mu_1^P(z)$ and $\mu_1^S(z)$ for any $z\in\partial\Omega$.

\section{Broken scattering relations}\label{brokenscattering}
In this section, let us depart from the elastic wave equation for a while and consider a geometrical inverse problem.\\

 For a compact Riemannian manifold $(M,g)$ with smooth boundary $\partial M$, we can extend it to a closed compact Riemannian manifold $(\widetilde{M},\widetilde{g})$.
The \textit{cut locus distance} along a geodesic $\gamma_{x,\xi}$ on $\widetilde{M}$ is denoted by
\[
\tau_R(x,\xi)=\sup\{s>0:\mathrm{dist}(x,\gamma_{x,\xi}(s))=s\}.
\]
For $(x,\xi)\in S\widetilde{M}$, we define the \textit{conjugate distance} $\tau_c(x,\xi)$ to be
\[
\tau_c(x,\xi)=\inf\{s>0:\mathrm{d}\exp_x\vert_{s\xi}\text{ is not one-to-one}\}.
\]

We also need to use the concepts of the \textit{boundary cut locus distance} and \textit{focal distance}. For this, we introduce the boundary exponential map
\[
\exp_{\partial M}:\partial M\times\mathbb{R}\rightarrow \widetilde{M},\quad \exp_{\partial M}(z,s)=\gamma_{z,\nu(z)}(s).
\]
 The pair $(z,s)$ defines the \textit{boundary normal coordinates} in $\widetilde{M}$ near $\partial M$. The \textit{boundary cut locus distance} is defined as
\[
\tau_b(z)=\sup\{s>0:\mathrm{dist}(\gamma_{z,\nu(z)}(s),\partial M)=s\}.
\]
The corresponding \text{boundary cut locus} is denoted by
\[
\omega_{\partial M}=\{y\in M\vert y=\gamma_{z,\nu(z)}(\tau_b(z)),z\in\partial M\}.
\]
For some $z\in \partial M$, we define its focal distance, $\tau_f(z)$ to be
\[
\tau_f(z)=\inf\{s>0:\mathrm{d}\exp_{\partial M}\vert_{(z,s)}\text{ is not one-to-one}\}.
\]
It can be shown that all the functions $\tau_R,\tau_c,\tau_b,\tau_f$ are continuous \cite{klingenberg1995riemannian}. Also we need the following relations
\begin{equation}
\tau_c(z,\nu(z))\geq \tau_f(z)+c_0,\quad z\in\partial M,
\end{equation}
\begin{equation}
\tau_R(z,\nu(z))\geq \tau_b(z)+c_0,\quad z\in\partial M,
\end{equation}
with some constant $c_0>0$, c.f. \cite{kurylev2010rigidity}.
~\\

A broken geodesic in a Riemannian manifold $(M,g)$ with smooth boundary $\partial M$ is a path $\alpha=\alpha_{x,\xi,z,\zeta}(t)$, where $z=\gamma_{x,\xi}(s)\in M$ for some $s\geq 0$, $\zeta\in S_zM$ and
\[
\alpha_{x,\xi,z,\zeta}(t)=\begin{cases}
\gamma_{x,\xi}(t),&t<s,\\
\gamma_{z,\zeta}(t-s),&t\geq s.
\end{cases}
\]
Denote 
\[
\ell(\alpha_{x,\xi,z,\zeta})=\inf\{t\vert \alpha_{x,\xi,z,\zeta}(t)\in\partial M\}.
\]
If $(M,g)$ is non-trapping, then $\ell(\alpha_{x,\xi,z,\zeta})<+\infty$ is the exit time of the broken geodesic $\alpha_{x,\xi,z,\zeta}$.
Recall that the \textit{broken scattering relation} of $(M,g)$ is then defined as
\[
\begin{split}
\mathcal{R}=\{&(x,\xi,y,\eta,t)\in SM\times SM\times [0,+\infty)\vert (x,\xi)\in\partial_+SM,(y,\eta)\in\partial_-SM,t=\ell(\alpha_{x,\xi,z,\zeta}),\\
&\quad\quad\quad\quad\quad\quad(\alpha_{x,\xi,z,\zeta}(t),\partial_t\alpha(x,\xi,z,\zeta)(t))=(y,\eta)\text{ for some }(z,\zeta)\in SM\}.
\end{split}
\]

\begin{figure}[htbp]
\centering
\includegraphics[width=0.4\textwidth]{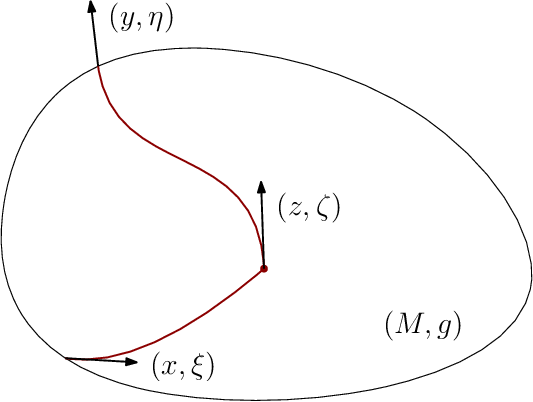}
\caption{Broken scattering relation}
\label{scatteringrelation}
\end{figure}
To simplify notation, we denote
\[
(x_0,\xi_0)\mathcal{R}_t(x_1,\xi_1)\quad\text{if and only if }(x_0,\xi_0,x_1,-\xi_1,t)\in\mathcal{R}.
\]
Clearly $\mathcal{L}\subset\mathcal{R}$. However, one cannot directly tell which element in $\mathcal{R}$ is also in $\mathcal{L}$. For the rest of this section, we assume that $\mathcal{L}$ is known.

It is proved in \cite{kurylev2010rigidity} that the broken scattering relation determines the set of \textit{boundary distance functions}
\[
\mathcal{R}(M)=\{r_x,x\in M\}\subset C(\partial M),
\]
where $r_x:\partial M\rightarrow\mathbb{R}$ is the function
\[
r_x(z)=\mathrm{dist}(x,z),\quad z\in\partial M.
\]
We emphasize here that one does not really know the location of $x$ for each boundary distance function in $\mathcal{R}(M)$.
It is well known that the set $\mathcal{R}(M)$ determines the metric $g$ up to a diffeomorphism fixing the boundary \cite{kachalov2001inverse}. However, we will see that one can only recover a subset of the broken scattering relation associated with $g_P$ from $\Lambda$.

This motivates us to define \textit{an abridged broken scattering relation} $\mathcal{R}'\subset \mathcal{R}$ such that 
\[
(x_0,\xi_0,x_1,-\xi_1,t)\in\mathcal{R}'
\]
 if one of the following two conditions holds:
\begin{enumerate}[label=(R\arabic*)]
\setlength{\itemindent}{-1em}
\item $\gamma_{x_0,\xi_0}(t_0)=\gamma_{x_1,\xi_1}(t_1)=z\in M$ with $\gamma_{x_0,\xi_0}([0,t_0]),\gamma_{x_1,\xi_1}([0,t_1])\subset M$ and  $t_0+t_1=t$,  where $t_0<\tau_R(x_0,\xi_0)$,  $t_1<\tau_R(x_1,\xi_1)$;
\item$\gamma_{x_0,\xi_0}(t_0)=\gamma_{x_1,\xi_1}(t_1)=z\in M$ with $\gamma_{x_0,\xi_0}([0,t_0]),\gamma_{x_1,\xi_1}([0,t_1])\subset M$ and  $t_0+t_1=t$,  where $t_0<\tau_c(x_0,\xi_0)$, $t_1<\tau_c(x_1,\xi_1)$, and in addition $(x_0,\xi_0)$ and $(x_1,\xi_1)$ are close enough.
\end{enumerate}

We remark here that if $(M,g)$ has no cut points, then $\mathcal{R}'=\mathcal{R}$. In general, $\mathcal{R}'$ could be a proper subset of $\mathcal{R}$. However, the set $\mathcal{R}'$ still contains enough information to determine $\mathcal{R}(M)$. This is the main result we aim to show in this section. We also denote
\[
(x_0,\xi_0)\mathcal{R}'_t(x_1,\xi_1)\quad\text{if and only if }(x_0,\xi_0,x_1,-\xi_1,t)\in\mathcal{R}'.
\]

\begin{definition}
Consider the set $S(z)$ of those $s\in (0,\mu_1(z))$ for which there are sequences $z_n,z'_n\rightarrow z$, $z_n,z'_n\in \partial M$, $z_n\neq z'_n$, $T_n\rightarrow 2s$ such that
\[
(z_n,\nu(z_n))\mathcal{R}'_{T_n}(z'_n,\nu(z'_n)).
\]
Define $\mu_2(z)=\inf S(z)$ if $S(z)\neq \emptyset$ and $\mu_2(z)=\mu_1(z)$ otherwise.
\end{definition}
\begin{lemma}
The function $\mu_2:\partial M\rightarrow \mathbb{R}$ satisfies
\begin{equation}
\min\{\mu_1,\tau_f(z),\tau_R(z,\nu)\}\leq \mu_2(z)\leq \min(\mu_1,\tau_f(z)),
\end{equation}
and $\mu_2(z)\geq\tau_b(z)$.
\end{lemma}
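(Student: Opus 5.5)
The lemma has three inequalities: the upper bound $\mu_2\le\min\{\mu_1,\tau_f\}$, the lower bound $\mu_2\ge\min\{\mu_1,\tau_f,\tau_R(z,\nu)\}$, and $\mu_2\ge\tau_b$. I follow the corresponding lemma in \cite{kurylev2010rigidity}, which uses the full broken scattering relation $\mathcal{R}$. The only new point is to check that the intersections used there satisfy (R1) or (R2), so that they lie in $\mathcal{R}'$.

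\emph{Upper bound.} $\mu_2\le\mu_1$ is built into the definition, so assume $\tau_f(z)<\mu_1(z)$ and set $s_0=\tau_f(z)$. At $(z,s_0)$ the differential $\mathrm{d}\exp_{\partial M}$ degenerates, so normal geodesics from nearby boundary points focus near $y_0=\gamma_{z,\nu}(s_0)$. For $s$ slightly larger than $s_0$, a standard argument with the degree of $\exp_{\partial M}$ near a focal point shows that $\exp_{\partial M}$ is not injective near $(z,s)$. This gives $z_n\neq z_n'$ converging to $z$ and times $s_n,s_n'\to s$ with $\gamma_{z_n,\nu}(s_n)=\gamma_{z_n',\nu}(s_n')$.

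To place this intersection in $\mathcal{R}'$ I use (R2). The vectors $(z_n,\nu)$ and $(z_n',\nu)$ are close, and $s_n,s_n'<\tau_c$ because $\tau_c(z,\nu)\ge\tau_f(z)+c_0$; this is why I take $s$ within $c_0$ of $s_0$. The geodesic segments stay in $M$ since $s<\mu_1$. Hence $T_n=s_n+s_n'\to 2s$, so $s\in S(z)$. Letting $s\downarrow s_0$ gives $\mu_2\le\tau_f$.

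\emph{Lower bounds.} Let $m=\min\{\mu_1,\tau_f,\tau_R(z,\nu)\}$. Suppose, for contradiction, that $s\in S(z)$ with $s<m$. Take the sequences $z_n,z_n'\to z$ with intersections $\gamma_{z_n,\nu}(t_n)=\gamma_{z_n',\nu}(t_n')$, where $t_n+t_n'\to 2s$.

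First I show $t_n,t_n'\to s$. Since $s<\tau_R(z,\nu)$, the normal geodesic from $z$ is minimizing up to time $s$. Passing to limits, both intersection times must converge to values whose sum is $2s$, and both points lie on the same geodesic $\gamma_{z,\nu}$. This forces $t_n,t_n'\to s$.

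Then $(z_n,t_n)$ and $(z_n',t_n')$ are distinct points with the same image under $\exp_{\partial M}$, both converging to $(z,s)$. Since $s<\tau_f(z)$, the map $\exp_{\partial M}$ is a local diffeomorphism near $(z,s)$, which is a contradiction. Hence $\mu_2\ge m$.

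For $\mu_2\ge\tau_b$, the same argument works for $s<\tau_b(z)$, using that $\exp_{\partial M}$ is injective on $\{(w,t):t<\tau_b(w)\}$, together with continuity of $\tau_b$ and $\tau_b\le\tau_f$. I do not need the second relation $\tau_R(z,\nu)\ge\tau_b(z)+c_0$ here.

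\emph{Main obstacle.} The delicate step is the upper bound: producing intersecting pairs near a focal point that genuinely satisfy (R2), not only the weaker $\mathcal{R}$ condition. This requires the closeness of $(z_n,\nu)$ and $(z_n',\nu)$ and times below $\tau_c$, which is exactly what the margin $\tau_c\ge\tau_f+c_0$ supplies. If the degree argument for non-injectivity beyond $\tau_f$ is troublesome, I would instead use the standard fact that a normal geodesic fails to minimize distance to $\partial M$ beyond its first focal point. That fact yields nearby normal geodesics which cross, and these crossings can then be fed into (R2).
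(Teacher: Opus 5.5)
Your lower bound follows the paper's argument: the paper cites \cite[Lemma 2.2]{kurylev2010rigidity} for $t_n,t_n'\to s$, and your minimization argument reproduces that step. Your direct proof of $\mu_2\ge\tau_b$ is also fine; the paper just notes $\tau_b\le\min\{\mu_1,\tau_f,\tau_R(z,\nu)\}$ and applies the lower bound.

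\textbf{The gap is in the upper bound.} Focal times along a normal geodesic are isolated. So for every $s>\tau_f(z)$ close enough to $\tau_f(z)$, the differential $\mathrm{d}\exp_{\partial M}$ is invertible at $(z,s)$, and $\exp_{\partial M}$ is injective on a neighborhood of $(z,s)$. Distinct pairs with equal image therefore cannot both converge to $(z,s)$. The sequences you claim for $s$ slightly beyond $\tau_f(z)$ do not exist, and there is nothing to let decrease to $\tau_f(z)$.

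The Euclidean ball of radius $R$ makes this concrete. There $\exp_{\partial M}(z,s)=(1-s/R)z$, and normal geodesics from distinct nearby boundary points meet only at the center, at time $R$ each. Hence $S(z)=\{R\}$ exactly. In this example the focal point has multiplicity two, so a degree argument would not detect non-injectivity even at the focal point itself. Your fallback fails on the same example: past the center, the nearest boundary point to $\gamma_{z,\nu}(s)$ lies near $-z$. Failure of minimization therefore produces no sequences $z_n,z_n'\to z$.

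The repair is to work at $s=\tau_f(z)$ itself when $\tau_f(z)<\mu_1(z)$. What you need is that $\exp_{\partial M}$ is not injective on any neighborhood of the focal point $(z,\tau_f(z))$. This is the focal-point analogue of Warner's theorem on conjugate points. It is also what the paper tacitly uses when it passes from ``$\mathrm{d}\exp_{\partial M}$ is not one-to-one'' to intersecting normal geodesics.

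That fact gives $z_n\neq z_n'$ and $s_n,t_n\to\tau_f(z)$ with $\gamma_{z_n,\nu(z_n)}(s_n)=\gamma_{z_n',\nu(z_n')}(t_n)$. The base points are distinct because a unit-speed geodesic cannot revisit a point after arbitrarily short time. No shift in $s$ is needed to get (R2). The margin $\tau_c(z,\nu(z))\ge\tau_f(z)+c_0$ and continuity of $\tau_c$ already give $s_n<\tau_c(z_n,\nu(z_n))$ and $t_n<\tau_c(z_n',\nu(z_n'))$ for large $n$. Hence $\tau_f(z)\in S(z)$ and $\mu_2(z)\le\tau_f(z)$.
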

\begin{proof}

To prove the right inequality we note that since $\mathrm{d}\exp_{\partial M}$ is not one-to-one at $(z,\tau_f(z))$, there exist $z_n,z_n'\rightarrow z$, $z_n\neq z_n'$ and $s_n,t_n\rightarrow \tau_f(z)$ such that
\[
\gamma_{z_n,\nu(z_n)}(s_n)=\gamma_{z'_n,\nu(z'_n)}(t_n).
\]
Because $\tau_f(z)$ is strictly less than $\tau_c(z,\nu(z))$,
we can take $n$ large enough such that $s_n<\tau_c(z_n,\nu(z_n))$ and $t_n<\tau_c(z'_n,\nu(z'_n))$.
In terms of the relation $\mathcal{R}'$, these imply that, for $n$ sufficiently large,
\[
(z_n,\nu(z_n))\mathcal{R}'_{T_n}(z'_n,\nu(z'_n)),\quad T_n=s_t+t_n,
\]
with $T_n\rightarrow 2\tau_f(z)$.


To prove the left inequality, let us assume that $s<\min\{\mu_1,\tau_f(z),\tau_R(z,\nu)\}$ such that there are sequences $z_n,z'_n\rightarrow z$, $z_n,z'_n\in \partial M$, $z_n\neq z'_n$, $T_n\rightarrow 2s$ such that
\[
(z_n,\nu(z_n))\mathcal{R}'_{T_n}(z'_n,\nu(z'_n)).
\]
For $n$ sufficiently large, $T_n<2\tau_R(z,\nu(z))$, and by \cite[Lemma 2.2]{kurylev2010rigidity} we have
\begin{equation}\label{convergencetwogeodesics}
\gamma_{z_n,\nu(z_n)}(s_n)=\gamma_{z'_n,\nu(z'_n)}(t_n),\quad s_n\rightarrow s, \quad t_n\rightarrow s,\quad z_n,z'_n\rightarrow z,\quad z_n\neq z'_n.
\end{equation}
As $s<\tau_f(z)$, $\exp_{\partial M}$ is a local diffeomorphism near $(z,s)$, which contradicts \eqref{convergencetwogeodesics}.

Finally notice that $\tau_b(z)\leq \tau_f(z)$, $\tau_b(z)\leq \tau_R(z,\nu(z))$ and $2\tau_b(z)<\mu_1(z)$, so clearly $\tau_b(z)\leq \mu_2(z)$.

\end{proof}

Denote
\[
\tau_M(z)=\min\{\mu_1(z),\tau_R(z,\nu(z))\},\quad z\in \partial M.
\]
Next we will use the family of intersecting geodesics.

\begin{definition}
Let $z_0\in \partial M$ and $t_0>0$. Consider a family $\mathcal{F}(z_0,t_0)=\{U,\xi(\cdot),t(\cdot)\}$ where $U\subset\partial M$ is a neighborhood of $z_0$, $\xi:U\rightarrow SM$ is a smooth section, and $t:U\rightarrow \mathbb{R}$ is a smooth function. We say that $\mathcal{F}(z_0,t_0)$ is a family of focusing directions if $\xi(\cdot),t(\cdot)$ satisfy the following conditions
\begin{equation}
\begin{split}
&(z,\xi(z))\mathcal{R}'_{T(z)}(z_0,\nu(z_0)),\quad (z,\xi(z))\mathcal{R}'_{T(z,z')}(z',\xi(z')),\quad z,z'\in U,\\
&T(z)=t(z)+t_0,\quad T(z,z')=t(z)+t(z'),
\end{split}
\end{equation}
and
\begin{equation}
t(z_0)=t_0,\quad \mathrm{d}t(z)\vert_{z_0}=0,\quad \xi(z_0)=\nu(z_0).
\end{equation}
\end{definition}

We need the following important proposition.
\begin{proposition}
Let $z_0\in \partial M$, $t_0<\tau_M(z_0)$, and $\mathcal{F}(z_0,t_0)$ be a family of focusing directions. Then there exists a neighborhood $\widetilde{U}\subset U$ of $z_0$ such that
\[
\gamma_{z,\xi(z)}(t(z))=\gamma_{z_0,\nu(z_0)}(t_0),\quad\text{for }z\in\widetilde{U}.
\]
\end{proposition}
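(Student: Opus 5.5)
The plan is to adapt the focusing-family argument of \cite{kurylev2010rigidity} to the abridged relation $\mathcal{R}'$. Throughout, set $x_0:=\gamma_{z_0,\nu(z_0)}(t_0)$ and $\gamma_0:=\gamma_{z_0,\nu(z_0)}$.

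\emph{Step 1: an intersection point for each $z$.} For each $z\in U$, the relation $(z,\xi(z))\mathcal{R}'_{T(z)}(z_0,\nu(z_0))$ provides parameters $s(z),r(z)\ge 0$ with
\[
\gamma_{z,\xi(z)}(s(z))=\gamma_0(r(z))=:y(z),\qquad s(z)+r(z)=t(z)+t_0 .
\]
Both times lie below $\tau_R$ in case (R1), and below $\tau_c$ with nearby initial data in case (R2). As $z\to z_0$ we have $\xi(z)\to\nu(z_0)$ and $t(z)\to t_0$. Since $t_0<\tau_M(z_0)$, the geodesic $\gamma_0$ is minimizing and free of conjugate points on $[0,t_0+\epsilon]$. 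I will show that $s(z)\to t_0$ and $r(z)\to t_0$, so that $y(z)\to x_0$. This should follow by the compactness argument of \cite[Lemma 2.2]{kurylev2010rigidity}, already used in the proof of the previous lemma. A limit of the intersection data gives $\gamma_0(s_\infty)=\gamma_0(r_\infty)$ with $s_\infty+r_\infty=2t_0$. Since $\gamma_0$ is minimizing from the boundary, this forces $s_\infty=r_\infty=t_0$. The same reasoning applied to the pairwise relations gives intersection points
\[
q(z,z')=\gamma_{z,\xi(z)}(a(z,z'))=\gamma_{z',\xi(z')}(a(z',z)),\qquad a(z,z')+a(z',z)=t(z)+t(z'),
\]
and these points also tend to $x_0$.

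\emph{Step 2: reduction to a local statement near $x_0$.} After shrinking $U$ to $\widetilde U$, all the relevant geodesic segments near $x_0$ lie in a small neighborhood $W$ of $x_0$. In $W$ I would use that $\exp$ is a diffeomorphism with no conjugate points, and that the lengths $s(z)$, $r(z)$, $a(z,z')$ equal distances. Away from $W$ the segments are uniformly close to $\gamma_0$, using $\tau_c$ or $\tau_R$ and continuity. In this setting the three geodesics $\gamma_0$, $\gamma_{z,\xi(z)}$, $\gamma_{z',\xi(z')}$ meet pairwise at $y(z)$, $y(z')$ and $q(z,z')$. I will then combine the three length identities with the triangle inequality. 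Starting from
\[
s(z)+r(z)=t(z)+t_0,\qquad s(z')+r(z')=t(z')+t_0,\qquad a(z,z')+a(z',z)=t(z)+t(z'),
\]
I will derive that the geodesic triangle with vertices $y(z)$, $y(z')$, $q(z,z')$ is degenerate. Strict triangle inequality in the small strongly convex set $W$ then forces $y(z)=y(z')=q(z,z')$. This shows that all curves of the family pass through one common point $y_*=\gamma_0(r_*)$.

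\emph{Step 3: identifying $y_*=x_0$ and $s(z)=t(z)$.} Once there is a common point, we have $s(z)=t(z)+(t_0-r_*)$ for all $z$. Evaluating at $z=z_0$, where $\xi(z_0)=\nu(z_0)$, gives $s(z_0)=r_*$, and so $r_*=t_0$. The hypothesis $\mathrm{d}t|_{z_0}=0$ enters here: it is the first-order compatibility which shows that $y_*$ is a genuine focal point of the family (the geodesics $\gamma_{z,\xi(z)}$ are then normal to the distance sphere $\{d(\cdot,x_0)=t_0\}$ near $z_0$), rather than some other intersection point. Consequently $s(z)=t(z)$, which is the claim $\gamma_{z,\xi(z)}(t(z))=\gamma_{z_0,\nu(z_0)}(t_0)$.

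\emph{Expected obstacle.} The main difficulty is Step 2, namely excluding a nondegenerate small triangle using only the time identities. If the triangle-inequality route fails, I would instead argue infinitesimally. Near $z_0$ the curves $\gamma_{z,\xi(z)}$ meet $\gamma_0$ transversally or coincide with it. By the implicit function theorem (using $t_0<\tau_c$), the functions $r(z)$ and $s(z)$ are smooth off the coincidence set. Differentiating the pairwise identity at $z'=z$ would then give $\mathrm{d}s=\mathrm{d}t$ and $\mathrm{d}r=0$ on $\widetilde U$. With $r(z_0)=t_0$, this yields $r\equiv t_0$ and $s\equiv t$.
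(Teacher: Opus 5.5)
\textbf{There is a genuine gap in your Step 2.} That step is the heart of the proof, and as planned it cannot work.

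For a single triple $\gamma_0,\gamma_{z,\xi(z)},\gamma_{z',\xi(z')}$, the three length identities say nothing about the triangle $y(z),y(z'),q(z,z')$. Adding the first two and subtracting the third gives
\[
2t_0=s(z)+r(z)+s(z')+r(z')-a(z,z')-a(z',z).
\]
So for \emph{any} nondegenerate triangle you may simply define $t_0$, $t(z)$, $t(z')$ by these relations, and all three identities hold. No triangle-inequality argument on one triangle can force degeneracy. The real constraint is that one function $t(\cdot)$ and one $t_0$ must work across the whole family, together with $\mathrm{d}t|_{z_0}=0$. You never actually use $\mathrm{d}t|_{z_0}=0$: Step 3 only names it, and once a common point is known it is not needed there.

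This hypothesis is indispensable, as a planar example shows:
\begin{itemize}
\item In $\mathbb{R}^2$ take the directed tangent lines $\ell_\theta(u)=c(\theta)+u\,e(\theta)$ to the circle $c(\theta)=R(\cos\theta,\sin\theta)$, with $e(\theta)=(-\sin\theta,\cos\theta)$. Start them at $z_\theta=c(\theta)-t_\theta e(\theta)$ and set $t(z_\theta)=t_\theta$.
\item Two such lines meet at a point lying a distance $R\tan(|\theta-\theta'|/2)$ beyond the tangency point on one line and the same distance before it on the other. Hence $a+b=t_\theta+t_{\theta'}$ for every pair. The relation with $\gamma_0=\ell_{\theta_0}$ is just the case $\theta'=\theta_0$.
\item Since $\partial_\theta z_\theta=(R-t'_\theta)e(\theta)+t_\theta(\cos\theta,\sin\theta)$, choosing $t'_{\theta_0}=R$ makes the boundary curve normal to $\ell_{\theta_0}$ at $z_0$, so $\xi(z_0)=\nu(z_0)$. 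All segments are minimizing, so the relations are of type (R1).
\end{itemize}
Everything used in your Steps 1--2 holds here, yet the triangles are nondegenerate, $r(z)$ is not constant, and $\gamma_{z,\xi(z)}(t(z))=c(\theta)\neq x_0$. The only hypothesis that fails is $\mathrm{d}t|_{z_0}=0$: the arclength derivative of $t$ at $z_0$ is $R/t_0$.

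\textbf{Your fallback has the same defect.} The function $a(z,z')$ is undefined on the diagonal, where the two geodesics coincide. So ``differentiating at $z'=z$'' yields at best that $\gamma_{z,\xi(z)}(t(z))$ is an envelope (focal) point of the family. The circle family satisfies that with $\mathrm{d}r\neq 0$. Moreover, in dimension $3$ the equation $\gamma_{z,\xi(z)}(s)=\gamma_0(r)$ is overdetermined, so the implicit function theorem does not give smooth $r,s$.

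\textbf{How the paper handles it.} The paper does not argue from scratch. It invokes \cite[Theorem 2.6]{kurylev2010rigidity}, noting that near $(z_0,\nu(z_0))$ the abridged and complete relations agree. In fact the inclusion $\mathcal{R}'\subset\mathcal{R}$ alone suffices: an $\mathcal{R}'$-focusing family is an $\mathcal{R}$-focusing family with the same $t_0<\tau_M(z_0)$. A self-contained proof would have to reproduce the mechanism of that theorem, where $\mathrm{d}t|_{z_0}=0$ plays an essential role.
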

\begin{proof}
The proof is identical to that of \cite[Theorem 2.6]{kurylev2010rigidity} by noticing that $ \tau_M(z_0)\leq\tau_R(z,\nu(z))$ and therefore for $(z,\xi(z))$ near $(z_0,\nu(z_0))$ an abridged broken scattering relation is the same as the complete one since condition (R2) is satisfied.
\end{proof}

The next lemma is from \cite[Lemma 2.10]{kurylev2010rigidity}. For the self-containedness of this paper, we still include the proof. 
\begin{lemma}
The abridged broken scattering relation $\mathcal{R}'$ determines the boundary cut locus distance $\tau_b(z)$, $z\in\partial M$.
\end{lemma}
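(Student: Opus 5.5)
The plan is to follow \cite[Lemma 2.10]{kurylev2010rigidity} and characterize $\tau_b(z_0)$, for each $z_0\in\partial M$, through families of focusing directions. The definition of such families only uses $\mathcal{R}'$ and the boundary data ($\partial M$, $\nu$, the smooth structure of $\partial M$). Since $\mathcal{L}$ is known, $\mu_1$ is known, and by the preceding definition $\mu_2$ is determined by $\mathcal{R}'$. The claim I aim for is
\[
\tau_b(z_0)=\sup\{t_0\in(0,\mu_2(z_0)) : \text{there is a family of focusing directions }\mathcal{F}(z_0,t')\text{ for every } t'\le t_0 \text{ satisfying } (\ast)\},
\]
where $(\ast)$ says that $t(z)\ge t_0$ for all $z\in U$. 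I would choose $(\ast)$ to encode "the point $x_0=\gamma_{z_0,\nu}(t_0)$ satisfies $\mathrm{dist}(x_0,\partial M)=t_0$ and is attained at $z_0$". The statement $\mathrm{dist}(x_0,z)=t(z)$ comes from the preceding proposition, provided the geodesics used are minimizing.

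\textbf{Step 1 (existence for $t_0<\tau_b(z_0)$).} Let $x_0=\gamma_{z_0,\nu(z_0)}(t_0)$. Since $t_0<\tau_b(z_0)\le\tau_R(z_0,\nu)$, the function $r_{x_0}$ is smooth near $z_0$, has a critical point at $z_0$, and attains its minimum $t_0$ there. For $z$ near $z_0$ set $t(z)=r_{x_0}(z)$, and let $\xi(z)$ be the initial direction of the unique minimizing geodesic from $z$ to $x_0$. This geodesic is short, lies inside $M$ by convexity, and stays before the cut locus. Then $\mathrm{d}t(z_0)=0$ and $\xi(z_0)=\nu(z_0)$. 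Any two of these geodesics meet at $x_0$ at parameters below $\tau_R$, so (R1) holds and we get the focusing relations. This shows $\mathcal{F}(z_0,t_0)$ exists and that $(\ast)$ holds, with $t(z)\ge \mathrm{dist}(x_0,\partial M)=t_0$.

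\textbf{Step 2 (failure beyond $\tau_b$).} Take $t_0>\tau_b(z_0)$ (with $t_0<\mu_2(z_0)\le\tau_M$, otherwise we are outside the admissible range anyway), and suppose a family with property $(\ast)$ exists. The preceding proposition gives $\gamma_{z,\xi(z)}(t(z))=x_0$ near $z_0$. Then $t(z)\ge \mathrm{dist}(x_0,z)\ge \mathrm{dist}(x_0,\partial M)$. Past the boundary cut locus we have $\mathrm{dist}(x_0,\partial M)<t_0$, attained at some $z_1\neq z_0$, possibly far away.

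This is where I expect the main obstacle. Condition $(\ast)$ is only local in $U$, so it must be upgraded to a global statement. My first attempt is to replace $(\ast)$ by a global requirement phrased with $\mathcal{R}'$: for every boundary point $z'$ and direction $\eta$ with $(z',\eta)\mathcal{R}'_{s+t_0}(z_0,\nu(z_0))$, we must have $s\ge t_0$. This can be checked from the data. To show it fails, I would use the minimizing geodesic from $z_1$ to $x_0$, of length $<t_0$ and below $\tau_R$. Its intersection with $\gamma_{z_0,\nu}$ at $x_0$ satisfies (R1), provided $t_0<\tau_R(z_0,\nu)$, which follows from $t_0<\tau_M$. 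This yields a forbidden relation. Conversely, in Step 1 no such shorter geodesic exists because $\mathrm{dist}(x_0,\partial M)=t_0$, so the global condition also holds there.

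\textbf{Step 3.} Combining Steps 1 and 2 gives the sup characterization. Here I use $\tau_b\le\mu_2$ from the lemma above, which guarantees $\tau_b(z_0)$ lies in the admissible range $(0,\mu_2(z_0)]$ where the proposition applies. Continuity of $\tau_b$ handles the endpoint. Hence $\tau_b$ is determined by $\mathcal{R}'$.
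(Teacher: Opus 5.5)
Your Step 2 has a genuine gap. The ``global requirement'' meant to replace $(\ast)$ does not detect the point $x_0$ at all.

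A relation $(z',\eta)\mathcal{R}'_{T}(z_0,\nu(z_0))$ records only the total length $T=a+b$ of the broken geodesic, where $\gamma_{z',\eta}(a)=\gamma_{z_0,\nu(z_0)}(b)$. It says nothing about where on $\gamma_{z_0,\nu(z_0)}$ the break occurs. Take $z'$ very close to $z_0$ and an inward $\eta$ tilted towards $\gamma_{z_0,\nu(z_0)}$. The two geodesics then cross with $a,b$ arbitrarily small. Both pieces are short, lie in $M$ and are far below $\tau_R$, so (R1) holds and $s=a+b-t_0<t_0$.

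Thus your condition fails for every $t_0>0$, in particular for $t_0<\tau_b(z_0)$. Your sentence ``in Step 1 no such shorter geodesic exists because $\mathrm{dist}(x_0,\partial M)=t_0$'' conflates two different things: a geodesic from $\partial M$ reaching $x_0$ in time $<t_0$, and a relation with $(z_0,\nu(z_0))$ of total length $<2t_0$.

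Restricting to $\eta=\nu(z')$ does not repair this. Near a slightly rounded corner of a convex domain, take $z'$ on the adjacent face at height $h$. Its normal meets $\gamma_{z_0,\nu(z_0)}$ at $b=h$ after time $a$ equal to the distance from $z_0$ to the corner, which is essentially $\tau_b(z_0)$. This gives total length $<2t_0$ for $t_0$ close to $\tau_b(z_0)$. So the sup characterization of Step 3 collapses.

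The missing idea is that the break point must be pinned to $x_0$ using the whole focusing family, not just $\gamma_{z_0,\nu(z_0)}$. The paper, following \cite[Lemma 2.10]{kurylev2010rigidity}, uses as a witness a pair $w\in\partial M$, $s_0<t_0$ with $(z,\xi(z))\mathcal{R}'_{t(z)+s_0}(w,\nu(w))$ for \emph{all} $z\in U$.

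Existence of such a witness for $\tau_b(z_0)<t_0<\min\{\mu_2(z_0),\tau_M(z_0)\}$ is your easy direction: take $w$ a closest boundary point to $x_0$.

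The real content is nonexistence for $t_0<\tau_b(z_0)$. Write $\gamma_{z,\xi(z)}(r(z))=\gamma_{w,\nu(w)}(t(z)-r(z)+s_0)$.
\begin{itemize}
\item If $\limsup_{z\to z_0}r(z)>t_0$, pass to the limit; the triangle inequality gives $\mathrm{dist}(w,x_0)\le s_0<t_0$.
\item Otherwise, since the identity holds for every $z\in U$, \cite[Lemma 2.9]{kurylev2010rigidity} forces $\gamma_{w,\nu(w)}(s_0)=x_0$.
\end{itemize}
Both contradict $\mathrm{dist}(x_0,\partial M)=t_0$. Hence $\tau_b(z_0)=\inf J(z_0)$, or $\mu_2(z_0)$ if $J(z_0)=\emptyset$. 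Your proposal contains no counterpart to this step, and it is exactly the nontrivial part of the lemma.

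A smaller point: the preceding lemma does not give $\mu_2\le\tau_M$, only $\mu_2\le\min\{\mu_1,\tau_f\}$. Your clause ``for every $t'\le t_0$'' would absorb this once the witness condition is corrected.
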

\begin{proof}
Fix $z_0\in\partial M$. Take $t_0>0$ and denote $x_0=\gamma_{z_0,\nu(z_0)}(t_0)$, $\eta=-\dot{\gamma}_{z_0,\nu(z_0)}(t_0)$. Clearly $z_0=\gamma_{x_0,\eta_0}(t_0)$.
If $t_0<\tau_M(z_0)$, the exponential map $\exp_{x_0}: S_{x_0}\widetilde{M}\times\mathbb{R}_+\rightarrow\widetilde{M}$ is a local diffeomorphism near $(\eta_0,t_0)$. As $\gamma_{x_0,\eta_0}$ hits $\partial M$ normally, all geodesics $\gamma_{x_0,\eta}$ hit $\partial M$ transversely for $\eta\in S_{x_0}M$ close to $\eta_0$. Then there exists a neighborhood $U\subset \partial M$ of $z_0$ a smooth section $\xi(z):U\rightarrow SU$ and a smooth function $t(z)$ such that
\[
\gamma_{z,\xi(z)}(t(z))=x_0,\quad z\in U.
\]
Then $\mathcal{F}(z_0,t_0)=\{U,\xi(\cdot),t(\cdot)\}$ is a family of focusing directions.

Recall that $\tau_b(z_0)<\tau_M(z_0)$. If $t_0<\tau_b(z_0)$, $z_0$ is the unique point on $\partial M$ closest to $x_0$. If $\tau_b(z_0)<t_0<\tau_M(z_0)$ there exists a point $w\in \partial M$, $w\neq z_0$ such that $\mathrm{dist}(x_0,w)<t_0$. Then there exists $0<s_0<t_0$ such that
\[
x_0=\gamma_{w,\nu(w)}(s_0).
\]
Therefore
\begin{equation}\label{scatteringzw}
(z,\xi(z))\mathcal{R}'_{t(z)+s_0}(w,\nu(w)),\quad z\in U.
\end{equation}

We will show that if $t_0<\tau_b(z_0)$, then there are no $w\in \partial M$ and $\mathcal{F}(z_0,t_0)$ satisfying \eqref{scatteringzw} with $s_0<t_0$. We assume the opposite and argue by contradiction. Then there exists a function $r(z)$ in $U$ with
\[
\gamma_{z,\xi(z)}(r(z))=\gamma_{w,\nu(w)}(t(z)-r(z)+s_0), \quad z\in U.
\]
We will prove that
\[
r_0:=\limsup_{z\rightarrow z_0} r(z)\leq t_0.
\]
If $r_0>t_0$, then there exists a sequence $z_n\rightarrow z_0$ with $r(z_n)\rightarrow r_0$. By the continuity of the exponential map, it follows that 
\[
y_0:=\gamma_{z_0,\nu(z_0)}(r_0)=\gamma_{w,\nu(w)}(t_0-r_0+s_0).
\]
By the triangle inequality, we have
\[
\mathrm{dist}(w,x_0)\leq\mathrm{dist}(x_0,y_0)+\mathrm{dist}(y_0,w)=(r_0-t_0)+(t_0-r_0+s_0)=s_0<t_0,
\]
which is a contradiction with $t_0<\tau_b(z_0)$.

Therefore, by making $U$ smaller if necessary, we have
\[
r(z)<\tau_M(z_0),\quad z\in U.
\]
Assume first that the geodesics $\gamma_{z_0,\nu(z_0)}$ and $\gamma_{w,\nu(w)}$ do not coincide. Applying \cite[Lemm 2.9]{kurylev2010rigidity} with $\gamma(\tau)=\gamma_{w,\nu(w)}(t_0+s_0-r_0+\tau)$ and $L=2t_0$, we obtain that $\gamma_{z_0,\nu(z_0)}(t_0)=\gamma_{w,\nu(w)}(s_0)$.
Since $s_0<t_0$ this contradicts with the definition of $\tau_b$. If  $\gamma_{z_0,\nu(z_0)}$ and $\gamma_{w,\nu(w)}$ do coincide, and since $w\neq z_0$, this implies that $\gamma_{z_0,\nu(z_0)}=w$. Then we would have $\mathrm{dist}(x_0,\partial M)\leq \mathrm{dist}(x_0,w)\leq s_0<t_0$, which is also a contradiction.

Now use the fact that the relation $\mathcal{R}'$ determines the function $\mu_2(z)$ satisfying $\tau_b(z)\leq \mu_2(z)$. Let $J(z_0)$ be the set of those $t_0\in [0,\mu_2(z_0)]$ for which there are $w\in\partial M$, $s_0<t_0$, and $\mathcal{F}(z_0,t_0)$ satisfying \eqref{scatteringzw}. If $\tau_b(z_0)<\mu_2(z_0)$, then $(\tau_b(z_0),\min\{\mu_2(z_0),\tau_M(z_0)\})\subset J(z_0)$. Remember also that $(0,\tau_b(z_0))\cap J(z_0)=\emptyset$. So if $J(z_0)=\emptyset$, we know that $\tau_b(z_0)=\mu_2(z_0)$. If $J(z_0)\neq \emptyset$, we can determine $\tau_b(z_0)$ as $\tau_b(z_0)=\inf J(z_0)$.
\end{proof}

The main result of this section is:
\begin{theorem}\label{brokentobdfs}
An abridged broken scattering relation $\mathcal{R}'$ determines the set $\mathcal{R}(M)\subset C(\partial M)$.
\end{theorem}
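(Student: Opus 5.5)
We will follow the strategy of \cite{kurylev2010rigidity}, where the full broken scattering relation $\mathcal{R}$ is shown to determine $\mathcal{R}(M)$, and check at each stage that only elements of $\mathcal{R}'$ are used. The plan has three stages. First, reconstruct the boundary normal coordinates near $\partial M$. Second, identify interior points with data on $\partial M$. Third, recover the distance functions $r_x$.

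For the first stage we use the preceding lemma. Since $\tau_b$ is known, the set
\[
\{(z,s)\,:\,z\in\partial M,\ 0\le s<\tau_b(z)\}
\]
parametrizes $M\setminus\omega_{\partial M}$ through $\exp_{\partial M}$. On this set $\exp_{\partial M}$ is a diffeomorphism onto its image, and $\omega_{\partial M}$ has measure zero and empty interior. It therefore suffices to determine $r_x$ for $x=\exp_{\partial M}(z_0,t_0)$ with $t_0<\tau_b(z_0)$. Density and continuity of $x\mapsto r_x$ in $C(\partial M)$ then give the closure, which is all of $\mathcal{R}(M)$.

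For the second stage, fix such $(z_0,t_0)$; note $t_0<\tau_b(z_0)<\tau_M(z_0)$. The construction in the proof of the preceding lemma produces a family of focusing directions $\mathcal{F}(z_0,t_0)$, and its defining relations lie in $\mathcal{R}'$ via (R1)/(R2). Conversely, the Proposition above shows that any family $\mathcal{F}(z_0,t_0)$ found from $\mathcal{R}'$ actually focuses at $x_0=\gamma_{z_0,\nu(z_0)}(t_0)$ on a neighborhood $\widetilde U$. Hence we can identify, from the data, the set of pairs $(z,\xi(z))$ with $z$ near $z_0$ whose geodesics pass through $x_0$, together with the times $t(z)$. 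This gives $r_{x_0}$ locally near $z_0$ (for $t_0<\tau_b$, $t(z)=\mathrm{dist}(z,x_0)$ near $z_0$).

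The third stage is to obtain $r_{x_0}(w)$ for arbitrary $w\in\partial M$, and we define it as an infimum over broken paths:
\[
r_{x_0}(w)=\inf\{T-t(z)\,:\,(z,\xi(z))\,\mathcal{R}'_T\,(w,\eta)\text{ for some }\eta,\ z\in\widetilde U\}.
\]
The main obstacle is justifying that this infimum equals $\mathrm{dist}(x_0,w)$ when only $\mathcal{R}'$ is available. Take a minimizing geodesic from $w$ to $x_0$ and continue it slightly beyond $x_0$ to a point $x'$ very close to $x_0$. Then break at $x'$ toward a point of $\widetilde U$; this can be done along a short segment through $x_0$, or along the focusing geodesic of some $z\in\widetilde U$. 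Neither segment goes past its cut point, because a minimizing segment strictly before its endpoint lies below $\tau_R$. Thus (R1) holds for approximating configurations, and letting $x'\to x_0$ yields equality in the limit.

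For the reverse inequality, every element of $\mathcal{R}'\subset\mathcal{R}$ corresponds to an actual broken path. Its length is at least $\mathrm{dist}(w,x_0)+t(z)$ only when the break point lies on $\gamma_{z,\xi(z)}$ beyond $x_0$, which needs care. We restrict to relations whose break time along $\gamma_{z,\xi(z)}$ exceeds $t(z)$, and we use the triangle inequality. The break time is itself detectable by varying $z\in\widetilde U$ as in the proof of the preceding lemma.

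Having $r_{x_0}$ for a dense set of $x_0$, we conclude by closure.
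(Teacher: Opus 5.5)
Your first two stages agree with the paper. You parametrize $M\setminus\omega_{\partial M}$ by $(z_0,t_0)$ with $t_0<\tau_b(z_0)$, and you use the Proposition to know that a family of focusing directions $\mathcal{F}(z_0,t_0)$ read off from $\mathcal{R}'$ really focuses at $x_0=\gamma_{z_0,\nu(z_0)}(t_0)$.

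The argument fails in the third stage, because the formula
\[
r_{x_0}(w)=\inf\{T-t(z)\,:\,(z,\xi(z))\,\mathcal{R}'_T\,(w,\eta)\ \text{for some }\eta,\ z\in\widetilde U\}
\]
is false. A single element of $\mathcal{R}'$ records only the boundary data at the two ends and the total length $T$ of a broken path. It says nothing about where the path breaks, and nothing forces the break to occur at or after $x_0$.

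For instance, for $w\in\widetilde U$ take $z\in\widetilde U$ near $w$, break $\gamma_{z,\xi(z)}$ at a small time $\epsilon$, and follow the short geodesic to $w$. By (R1) this gives $(z,\xi(z))\mathcal{R}'_T(w,\eta)$ with $T$ as small as you like. So your infimum is negative, while $r_{x_0}(w)\ge t_0>0$. For general $w$, such early breaks give $T-t(z)$ close to $\mathrm{dist}(z,w)-t(z)$. By the triangle inequality this is at most $\mathrm{dist}(x_0,w)$, and typically strictly less.

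Your remedy, restricting to relations whose break time along $\gamma_{z,\xi(z)}$ exceeds $t(z)$, is not an operation on the data. The remark that the break time is ``detectable by varying $z$'' is exactly the missing argument, not a justification of it.

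The missing idea is to impose the relation uniformly over the whole focusing family, with a common residual time and a common endpoint direction. The paper defines $D(z_0,t_0,w_0)$ as the infimum of those $s\ge t_0$ for which there exist $\mathcal{F}(z_0,t_0)=\{U,\xi(\cdot),t(\cdot)\}$ and a \emph{single} $\eta\in S_{w_0}M$ with $(z,\xi(z))\mathcal{R}'_{s+t(z)}(w_0,\eta)$ for \emph{all} $z\in U$.

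For the lower bound, write the break points as $\gamma_{z,\xi(z)}(r(z))=\gamma_{w_0,\eta}(s+t(z)-r(z))$.
\begin{itemize}
\item If $\limsup_{z\to z_0}r(z)>t_0$, the breaks accumulate beyond $x_0$, and the triangle inequality gives $s\ge\mathrm{dist}(x_0,w_0)$.
\item If $\limsup_{z\to z_0}r(z)\le t_0$, then \cite[Lemma 2.9]{kurylev2010rigidity} forces $\gamma_{w_0,\eta}(s)=x_0$, so again $s\ge\mathrm{dist}(x_0,w_0)$.
\item The case where $\gamma_{w_0,\eta}$ and $\gamma_{z_0,\nu(z_0)}$ coincide is trivial.
\end{itemize}
This dichotomy exists only because an open set of $z$'s shares the same $s$ and the same $(w_0,\eta)$. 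It is what replaces your ``restrict to late breaks''.

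For the upper bound, the paper breaks every $\gamma_{z,\xi(z)}$ at $x_0$ itself and continues along one fixed minimizing geodesic from $x_0$ to $w_0$. Your auxiliary point $x'$ beyond $x_0$ on that geodesic does not in general lie on the curves $\gamma_{z,\xi(z)}$. So it does not yield broken paths issuing from $(z,\xi(z))$.
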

\begin{proof}
Because our boundary is convex, we will give a simplified proof than that of \cite[Theorem 2.13]{kurylev2010rigidity}. 
Since for any $x_0\in M\setminus\omega_{\partial M}$ there exists a unique point $z_0\in\partial M$ closest to $x_0$ and $x_0=\gamma_{z_0,\nu(z_0)}(t_0)$. Therefore there is a one-to-one correspondence between the set
\[
\{\gamma_{z_0,\nu(z_0)}(t_0)\vert z_0\in\partial M,t_0<\tau_b(z_0)\},
\]
which could be indexed by $(z_0,t_0)$, and the set $M\setminus\omega_{\partial M}$. 
Recall that $\tau_b(z_0)$ can be determined by $\mathcal{R}'$. As $\omega_{\partial M}=\{\gamma_{z_0,\nu(z_0)}(t_0)\vert z_0\in\partial M,t_0=\tau_b(z_0)\}$ has measure zero, one only needs consider the case $t_0<\tau_b(z_0)$.
Now fix $z_0\in\partial M$ and let $x_0=\gamma_{z_0,\nu(z_0)}(t_0)$ with $t_0<\tau_b(z_0)$. For any point $w_0\in\partial M$ we will determine $D(z_0,t_0,w_0):=\mathrm{dist}(x_0,w_0)$. Keep in mind that we do not know the precise location of $x_0$, but only that it is on the geodesic issuing from $z_0$ in normal direction and has arclength $t_0$ from $z_0$.

We claim that for $t_0<\tau_b(z_0)$,
\begin{equation}\label{Dztwexpression}
\begin{split}
D(z_0,t_0,w_0)=&\inf\{s\vert s\geq t_0,\text{there exist a focusing sequence }\mathcal{F}(z_0,t_0)\text{ and }\eta\in S_{w_0}M\\
&\quad\quad\quad\text{ such that } (z,\xi(z))\mathcal{R}'_{s+t(z)}(w_0,\eta)\text{ for any }(z,\xi(\cdot),t(\cdot))\in\mathcal{F}(z_0,t_0)\}.
\end{split}
\end{equation}
To prove the claim, we first take $x(s)$ to be a shortest path from $x_0$ to $w_0$ parametrized by the arclength such that $w_0=x(s_0)$. By the convexity of $\partial M$ the path $x(s)$ is a geodesic in $M$. Denote $\eta_0=-x'(s_0)$. As $t_0\leq\tau_b(z_0)<\tau_M(z_0)$, there is a family of focusing directions $\mathcal{F}(z_0,t_0)=\{U,\xi(\cdot),t(\cdot)\}$ such that
\[
\gamma_{z,\xi(z)}(t(z))=x_0,\quad\text{for all }z\in U.
\]
Then, since $\gamma_{z_0,\nu(z_0)}([0,t_0])$ is length minimizing,
\[
(z,\xi(z))\mathcal{R}'_{s_0+t(z)}(w_0,\eta_0),\quad \text{for all }z\in U.
\]
Since $z_0$ is the closest point on boundary to $x_0$, we know that $s_0\geq t_0$. Therefore the right hand side of \eqref{Dztwexpression} is less or equal than $D(z_0,t_0,w_0)$.

To prove that the right hand side of \eqref{Dztwexpression} is greater or equal than $D(z_0,t_0,w_0)$, assume for some $\mathcal{F}(z_0,t_0)=\{U,\xi(\cdot),t(\cdot)\}$ and $\eta$,
\[
 (z,\xi(z))\mathcal{R}'_{s+t(z)}(w_0,\eta),\quad\text{for all }z\in U,
 \]
 for some $s\geq t_0$.
 Then for some $r(z),\tau(z)$ with $r(z)+\tau(z)=s+t(z)$ we have $\gamma_{z,\xi(z)}(r(z))=\gamma(\tau(z))$. For the trivial case when the geodesics $\gamma_{z_0,\nu(z_0)}$ and $\gamma_{w_0,\eta}$ coincide, we have $t_0+s\geq\mathrm{dist}(z_0,x_0)+\mathrm{dist}(x_0,w_0)$ and then $s\geq \mathrm{dist}(x_0,w_0)$.  Otherwise we consider first the case when $\limsup r(z)=:r>t_0$ as $z\rightarrow z_0$. Denoting $\gamma_{z_0,\nu(z_0)}(r)=x_1$, we have
 \[
 \mathrm{dist}(w_0,x_0)\leq\mathrm{dist}(w_0,x_1)+\mathrm{dist}(w_1,x_0)\leq (s+t_0-r)+(r-t_0)\leq s.
 \]
 For the case $r\leq t_0$, \cite[Lemma 2.9]{kurylev2010rigidity} implies that
 \[
 \gamma_{z_0,\nu(z_0)}=\gamma_{w_0,\eta}(s).
 \]
 This also yields $s\geq\mathrm{dist}(w_0,x_0)$. This proves the claim and completes the proof of the theorem.
 
\end{proof}

\section{Determination of the $P$-wavespeed}\label{determinationPwavespeed}
In this section, we will show that the nonlinear displacement-to-traction map $\Lambda$ determines an abridged broken scattering relations with respect to $(\Omega, g_P)$. We will use the nonlinear interactions of one $P$-wave and one $S$-wave. The incoming $P$-wave and the newly generated $P$-wave would give a broken scattering relation.

When needed, we add superscripts $P/S$ to indicate the certain geometric objects are with respect to $g_{P/S}$.
Let $(x_1,\xi_1),(x_0,\xi_0)\in\partial_+S^P\Omega$, $(x_2,\xi_2)\in\partial_+ S^S\Omega$, $t_1,t_2,t_0\in (0,T)$ and $z_1=(t_1,x_1)$, $z_2=(t_2,x_2)$, $z_0=(t_0,x_0)$. 
Let
\[
\vartheta^{(1)}(t)=(t,\gamma^P_{x_1,\xi_1}(t-t_1)),\quad \vartheta^{(0)}(t)=(t,\gamma^P_{x_0,\xi_0}(t_0-t))
\]
be one forward and one backward null geodesics in $((0,T)\times \Omega,-\mathrm{d}t^2+g_P)$, and
\[
\vartheta^{(2)}(t)=(t,\gamma^S_{x_2,\xi_2}(t-t_2))
\]
be a forward null geodesic in $((0,T)\times \Omega,-\mathrm{d}t^2+g_S)$. 
\begin{figure}[htbp]
\centering
\includegraphics[width=0.27\textwidth]{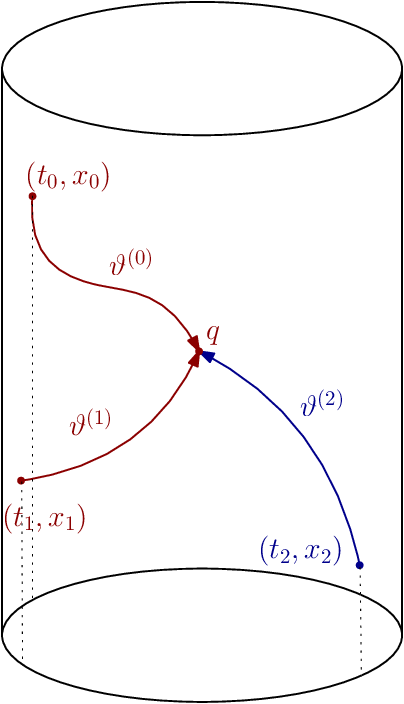}
\caption{The null geodesics $\vartheta^{(1)}$, $\vartheta^{(2)}$ and $\vartheta^{(0)}$ intersect at $q$.}
\label{PSPspacetime}
\end{figure}

We construct Gaussian beam solutions $u_{\kappa_j\varrho}^{(j)}$ concentrating near $\vartheta^{(j)}$, $j=1,2,0$, without reflections, as constructed in Section \ref{gaussianwithoutreflection}, of the following forms,
\[
\begin{split}
&u_{\kappa_1\varrho}^{(1)}=\sum_{k=0}^{2K}(\kappa_1\varrho)^{-k}\mathbf{a}^{(1)}_{P,k}e^{\mathrm{i}\kappa_1\varrho\varphi_P^{(1)}},\\
&u_{\kappa_2\varrho}^{(2)}=\sum_{k=0}^{2K}(\kappa_2\varrho)^{-k}\mathbf{a}^{(2)}_{S,k}e^{\mathrm{i}\kappa_2\varrho\varphi_S^{(2)}},\\
&u_{\kappa_0\varrho}^{(0)}=\sum_{k=0}^{2K}(\kappa_0\varrho)^{-k}\mathbf{a}^{(0)}_{P,k}e^{\mathrm{i}\kappa_0\varrho\varphi_P^{(0)}},
\end{split}
\]
where $\kappa_1,\kappa_2,\kappa_0$ are constants.
In the following, we drop the subscripts $P/S$ for $\varphi^{(j)}$ and $\mathbf{a}^{(j)}$ as there is no ambiguity.
The boundary values of $u^{(j)}_{\kappa_j\varrho}$ can be determined according to discussions in Section \ref{gaussianwithoutreflection}. Let $u^{(j)}$ be solutions to \eqref{linear_eqj} with $f^{(j)}=u^{(j)}_{\kappa_j\varrho}\vert_{(0,T)\times\partial\Omega}$, $j=1,2$ and $u^{(0)}$ be the solution to \eqref{backward_eq} with $f^{(0)}=u^{(0)}_{\kappa_0\varrho}\vert_{(0,T)\times\partial\Omega}$.\\

For the boundary values, once $(x_j,\xi_j,t_j)$ for $j=1,2,0$ have been decided, there are still some parameters to be chosen:
(1) values of $\kappa_1,\kappa_2,\kappa_0$;
(2) the values of $\varphi^{(j)}_2$ and $\mathbf{a}^{(j)}_{0,0}$ at the initial point $(t_j,x_j)$ (we introduce the notation $\sigma$ to include all these parameters and fixing the choices of $\varphi^{(j)}_k$ for $k\geq 3$ and $\mathbf{a}^{(j)}_{k,\ell}$ with $(k,\ell)\neq (0,0)$ at $(t_j,x_j)$ to be zero); (3) the parameter $\delta$ in the cut-off function.

 We now define
\begin{equation}\label{definitionD}
\mathfrak{D}_{\delta,\sigma}(x_1,\xi_1,t_1,x_2,\xi_2,t_2,x_0,\xi_0,t_0)=\lim_{\varrho\rightarrow+\infty}\frac{\mathrm{i}}{\varrho\kappa_1\kappa_2\kappa_0}\int_0^T\int_{\partial\Omega}\Lambda''(f_{\kappa_1\varrho}^{(1)},f_{\kappa_2\varrho}^{(2)})f_{\kappa_0\varrho}^{(0)}\mathrm{d}S\mathrm{d}t.
\end{equation}
Take $K'>0$ sufficiently large. Using the integral identity \eqref{integralform} and the construction of Gaussian beam solutions, one obtains
\[
\mathfrak{D}_{\delta,\sigma}(x_1,\xi_1,t_1,x_2,\xi_2,t_2,x_0,\xi_0,t_0)=\lim_{\varrho\rightarrow+\infty}\varrho^2\int_0^T\int_\Omega e^{\mathrm{i}\varrho S}\mathcal{A}\mathrm{d}x\mathrm{d}t,
\]
where
\[
S=\kappa_1\varphi^{(1)}+\kappa_2\varphi^{(2)}+\kappa_0\varphi^{(0)},
\]
and
\begin{align*}
\mathcal{A}=&\mathcal{G}(\mathbf{a}_0^{(1)}\otimes\nabla\varphi^{(1)},\mathbf{a}_0^{(2)}\otimes\nabla\varphi^{(2)},\mathbf{a}_0^{(0)}\otimes\nabla\varphi^{(0)})\\
=&\mathscr{B}[(\mathbf{a}_0^{(1)}\cdot\nabla\varphi^{(1)})(\mathbf{a}_0^{(2)}\cdot\nabla\varphi^{(0)})(\mathbf{a}_0^{(0)}\cdot\nabla\varphi^{(2)})+(\mathbf{a}_0^{(2)}\cdot\nabla\varphi^{(2)})(\mathbf{a}_0^{(1)}\cdot\nabla\varphi^{(0)})(\mathbf{a}_0^{(0)}\cdot\nabla\varphi^{(1)})\\
&\quad\quad\quad\quad+ (\mathbf{a}_0^{(2)}\cdot\nabla\varphi^{(1)})(\mathbf{a}_0^{(1)}\cdot\nabla\varphi^{(2)})(\mathbf{a}_0^{(0)}\cdot\nabla\varphi^{(0)})]\\
&+\frac{\mathscr{A}}{4}\left((\mathbf{a}_0^{(2)}\cdot\nabla\varphi^{(1)})(\mathbf{a}_0^{(1)}\cdot\nabla\varphi^{(0)})(\mathbf{a}_0^{(0)}\cdot\nabla\varphi^{(2)})+(\mathbf{a}_0^{(1)}\cdot\nabla\varphi^{(2)})(\mathbf{a}_0^{(2)}\cdot\nabla\varphi^{(0)})(\mathbf{a}_0^{(0)}\cdot\nabla\varphi^{(1)})\right)\\
&+(\lambda+\mathscr{B})[(\mathbf{a}_0^{(1)}\cdot\nabla\varphi^{(1)})(\mathbf{a}_0^{(2)}\cdot\mathbf{a}_0^{(0)})(\nabla\varphi^{(2)}\cdot\nabla\varphi^{(0)})+(\mathbf{a}_0^{(2)}\cdot\nabla\varphi^{(2)})(\mathbf{a}_0^{(1)}\cdot\mathbf{a}_0^{(0)})(\nabla\varphi^{(1)}\cdot\nabla\varphi^{(0)})\\
&\quad\quad+(\mathbf{a}_0^{(1)}\cdot\mathbf{a}_0^{(2)})(\nabla\varphi^{(1)}\cdot\nabla\varphi^{(2)})(\mathbf{a}_0^{(0)}\cdot\nabla\varphi^{(0)})]+2\mathscr{C}(\mathbf{a}_0^{(1)}\cdot\nabla\varphi^{(1)})(\mathbf{a}_0^{(2)}\cdot\nabla\varphi^{(2)})(\mathbf{a}_0^{(0)}\cdot\nabla\varphi^{(0)})\\
&+(\mu+\frac{\mathscr{A}}{4})\Big((\mathbf{a}_0^{(1)}\cdot\mathbf{a}_0^{(2)})(\nabla\varphi^{(1)}\cdot\mathbf{a}_0^{(0)})(\nabla\varphi^{(2)}\cdot\nabla\varphi^{(0)})+(\nabla\varphi^{(1)}\cdot\nabla\varphi^{(2)})(\mathbf{a}_0^{(1)}\cdot\mathbf{a}_0^{(0)})(\mathbf{a}_0^{(2)}\cdot\nabla\varphi^{(0)})\\
&\quad \quad+(\mathbf{a}_0^{(2)}\cdot\mathbf{a}_0^{(1)})(\nabla\varphi^{(2)}\cdot\mathbf{a}_0^{(0)})(\nabla\varphi^{(1)}\cdot\nabla\varphi^{(0)})+(\mathbf{a}_0^{(2)}\cdot\mathbf{a}_0^{(0)})(\mathbf{a}_0^{(1)}\cdot\nabla\varphi^{(0)})(\nabla\varphi^{(1)}\cdot\nabla\varphi^{(2)})\\
&\quad\quad+(\nabla\varphi^{(1)}\cdot\mathbf{a}_0^{(2)})(\nabla\varphi^{(2)}\cdot\nabla\varphi^{(0)})(\mathbf{a}_0^{(1)}\cdot\mathbf{a}_0^{(0)})+(\nabla\varphi^{(2)}\cdot\mathbf{a}_0^{(1)})(\nabla\varphi^{(1)}\cdot\nabla\varphi^{(0)})(\mathbf{a}_0^{(2)}\cdot\mathbf{a}_0^{(0)})\Big),
\end{align*}
In this section we focus on the recovery of an abridged broken scattering relation $\mathcal{R}'$ associated with $g_P$. The lens relation associated with $g_P$ is denoted by $\mathcal{L}$. We first introduce the notation for the set of $P/S$-wave light-like covectors at some point $q\in (0,T)\times\Omega$,
\[
L_q^{P/S,\pm}((0,T)\times\Omega):=\{(\tau,\xi)\in T^*_q((0,T)\times\Omega),\,\tau^2=c_{P/S}^2|\xi|^2,\,\pm \tau>0\}.
\]

\begin{theorem}
Assume $\lambda+3\mu+\mathscr{A}+2\mathscr{B}\neq 0$ in $\overline{\Omega}$.
Let $(x_1,\xi_1,x_0,-\xi_0,t_0-t_1)\notin\mathcal{L}$. Then the following statements hold:
\begin{enumerate}[label=\textnormal{(\arabic*)}]
\setlength{\itemindent}{-0.5em}
\item If $\mathfrak{D}_{\delta_j,\sigma}(x_1,\xi_1,t_1,x_2,\xi_2,t_2,x_0,\xi_0,t_0)\neq 0$ for some $(x_2,\xi_2,t_2)$, $\sigma$, and a sequence $\{\delta_j\}_{j=1}^\infty$ converging to $0$, then $(x_1,\xi_1,x_0,-\xi_0,t_0-t_1)\in\mathcal{R}$.
\item For $(x_1,\xi_1,x_0,-\xi_0,t)$ in a generic subset of $\mathcal{R}$ satisfying either of the two conditions \textnormal{(R1)(R2)}, then there exist $(x_2,\xi_2)\in\partial_+S^S\Omega$, $t_1,t_2,t_0\in (0,T)$ with $t_0-t_1=t$,  and $\sigma$ such that $\mathfrak{D}_{\delta,\sigma}(x_1,\xi_1,t_1,x_2,\xi_2,t_2,x_0,\xi_0,t_0)\neq 0$ for all $\delta$ sufficiently small.
\end{enumerate}
\end{theorem}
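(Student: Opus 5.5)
The plan is to analyze the oscillatory integral
\[
\varrho^2\int_0^T\int_\Omega e^{\mathrm{i}\varrho S}\mathcal{A}\,\mathrm{d}x\,\mathrm{d}t,\qquad S=\kappa_1\varphi^{(1)}+\kappa_2\varphi^{(2)}+\kappa_0\varphi^{(0)},
\]
by stationary and non-stationary phase in the limit $\varrho\to+\infty$, followed by $\delta_j\to 0$.

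\textbf{Part (1).} Since each $\mathbf{a}^{(j)}$ is supported in a $\delta$-tube around $\vartheta^{(j)}$ and $\Im S\geq c\sum_j d(\cdot,\vartheta^{(j)})^2$, the integrand is $O(\varrho^{-\infty})$ outside the triple intersection of the tubes. Letting $\delta_j\to 0$, a nonzero limit forces $\vartheta^{(1)}\cap\vartheta^{(2)}\cap\vartheta^{(0)}\neq\emptyset$. If the three null geodesics are pairwise disjoint near every point, or intersect only with $\kappa_1\dot\vartheta^{(1)\flat}+\kappa_2\dot\vartheta^{(2)\flat}+\kappa_0\dot\vartheta^{(0)\flat}\neq 0$ at the intersection, then $\nabla S\neq 0$ there, and integration by parts with $\nabla S$ gives decay faster than $\varrho^{-2}$. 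The Gaussian localization transversal to each beam contributes $\varrho^{-3/2}$ per beam in a 4-dimensional integral, i.e.\ $\varrho^{-2}$ after balancing, so only stationary points survive at order $\varrho^{-2}$.

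Thus a nonzero $\mathfrak{D}$ yields a point $q=(t_q,x_q)$ where $\gamma^P_{x_1,\xi_1}$ and $\gamma^P_{x_0,\xi_0}$ meet, with $t_q-t_1+t_0-t_q=t_0-t_1$. The $P$-path from $x_1$ to $x_q$, concatenated with the reversed $P$-path from $x_q$ to $x_0$, is a broken $P$-geodesic of length $t_0-t_1$. This gives $(x_1,\xi_1,x_0,-\xi_0,t_0-t_1)\in\mathcal{R}$. The exclusion from $\mathcal{L}$ rules out the degenerate collinear case, where the two $P$-beams coincide; there the integral would be non-local along a curve rather than concentrated at $q$.

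\textbf{Part (2).} Given $(x_1,\xi_1,x_0,-\xi_0,t)$ in $\mathcal{R}$ satisfying (R1) or (R2), let $x_q$ be the break point and let $\zeta_1,\zeta_0$ be the two $P$-covectors there. By (R1) or (R2), the point $x_q$ is before the cut or conjugate points, and by the non-lens assumption $\zeta_1\neq-\zeta_0$ up to scaling. I would choose an $S$-light-like covector $\zeta_2$ at $q$ and constants $\kappa_j$ satisfying the covector balance
\[
\kappa_1(-1,\zeta_1)+\kappa_2(-1,\zeta_2)+\kappa_0(1,-\zeta_0)=0.
\]
Such a choice exists because $c_S<c_P$: the $S$-cone covector must equal a combination of two $P$-cone covectors. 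This gives a 2-plane of possibilities, and I would solve for $\kappa$'s with $\kappa_2$ chosen so the triangle closes; a sign convention with $\kappa_0<0$ may be needed. Then trace $\gamma^S$ backward from $x_q$ in direction $\zeta_2$ to the boundary, which is possible by non-trapping, to obtain $(x_2,\xi_2,t_2)$.

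Stationary phase at $q$ then gives
\[
\mathfrak{D}=c\,\det(\mathrm{Hess}\,S)^{-1/2}\,\mathcal{A}(q)\cdot(\text{amplitude products}).
\]
The identity \eqref{identityHY} and the freedom in $\sigma$ (the $H_0$ data) keep $\Im\mathrm{Hess}\,S>0$, so the determinant is nonzero. The remaining task is to show that $\mathcal{A}(q)\neq 0$ for a suitable choice of the $S$-polarization $\mathbf{a}_0^{(2)}\perp\zeta_2$.

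\textbf{Main obstacle.} The key step is the algebraic computation of $\mathcal{A}(q)$. I would plug in
\[
\mathbf{a}^{(1)}=\zeta_1,\qquad \mathbf{a}^{(0)}=\zeta_0,\qquad \mathbf{a}^{(2)}=\text{the in-plane unit polarization orthogonal to }\zeta_2,
\]
and use the balance relation to express all inner products in terms of the angle between $\zeta_1$ and $\zeta_0$. The expected outcome is that $\mathcal{A}(q)$ equals
\[
(\lambda+3\mu+\mathscr{A}+2\mathscr{B})\times\text{(an explicit geometric factor)}.
\]
The $\mathscr{C}$-term drops out because $\nabla\cdot$ of an $S$-wave vanishes at leading order. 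The geometric factor vanishes only on a nongeneric set of angles, which is where the genericity hypothesis in (2) enters. Since the hypothesis gives $\lambda+3\mu+\mathscr{A}+2\mathscr{B}\neq 0$ in $\overline{\Omega}$, $\mathfrak{D}\neq 0$ for all small $\delta$.

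I expect this explicit reduction of the contraction $\mathcal{A}$ to the stated coefficient, and verifying that the geometric factor is nonzero outside a thin set, to be the hardest part. I would first attempt it in coordinates where $\zeta_1,\zeta_0$ span the $x_1x_2$-plane.
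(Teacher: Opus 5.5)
Your plan follows the paper's proof. For (1), the paper argues exactly as you do: if the tuple is not in $\mathcal{R}$, then $\vartheta^{(1)}\cap\vartheta^{(0)}=\emptyset$, so for small $\delta$ the amplitude supports have empty triple intersection and the limit vanishes. For (2), the paper closes the covector triangle with an $S$-lightlike $\zeta^{(2)}=\frac{1}{1+a}(a\zeta^{(1)}+\zeta^{(0)})$, obtained from a quadratic in $a$ with positive discriminant since $c_P>c_S$. It takes $\mathbf{a}^{(1)}_0(q)=\eta^{(1)}$, $\mathbf{a}^{(0)}_0(q)=\eta^{(0)}$, $\mathbf{a}^{(2)}_0(q)=\alpha=b\eta^{(1)}+\eta^{(0)}\perp\eta^{(2)}$, and computes $\mathcal{A}(q)=(\lambda+3\mu+\mathscr{A}+2\mathscr{B})(q)\cos\beta\,(ab\cos\beta+\cos\beta+a+b)$, which confirms the factorization you anticipated but did not carry out.

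Two small fixes. In your balance relation all three covectors should be the actual $\dot\vartheta^{(j)}(q)^{\flat}$: as written, $(1,-\zeta_0)$ is inconsistent with your convention that the lens case is $\zeta_1=-\zeta_0$, since coincident beams must have proportional covectors. You also need to place $t_q$ (e.g. at $T/2$) so that $T>2\max\{\mathrm{diam}(\Omega,g_P),\mathrm{diam}(\Omega,g_S)\}$ guarantees $t_1,t_2,t_0\in(0,T)$.
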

\begin{proof}
\noindent For (1), assume in opposite $(x_1,\xi_1,x_0,-\xi_0,t_0-t_1)\notin\mathcal{R}$, then $\vartheta^{(1)}\cap\vartheta^{(0)}=\emptyset$. So if $\delta$ is taken sufficiently small, the supports of $\mathbf{a}^{(1)},\mathbf{a}^{(2)}, \mathbf{a}^{(0)}$ are disjoint, and then
\[
\lim_{\varrho\rightarrow+\infty}\varrho^2\int_0^T\int_\Omega e^{\mathrm{i}\varrho S}\mathcal{A}\mathrm{d}x\mathrm{d}t=0.
\]

Next we prove (2).
If $(x_1,\xi_1,x_0,-\xi_0,t_0-t_1)\in\mathcal{R}$ and it satisfies either (R1) or (R2), then $\vartheta^{(1)}$ and $\vartheta^{(0)}$ intersect at a single point $q=(t_q,x_q)\in (0,T)\times\Omega$. Denote
\[
\zeta^{(1)}=\dot{\vartheta}^{(1)}(q)^{\flat,P}\in L_q^{P,+}((0,T)\times\Omega),\quad\zeta^{(0)}=\dot{\vartheta}^{(0)}(q)^{\flat,P}\in L_q^{P,+}((0,T)\times\Omega).
\]
Then we can choose $\zeta^{(2)}:=(\tau^{(2)},\eta^{(2)})\in L_q^{S,+}((0,T)\times\Omega)$, such that there are nonzero constants $\kappa_1,\kappa_2,\kappa_0$ with
\[
\kappa_1\zeta^{(1)}+\kappa_2\zeta^{(2)}+\kappa_0\zeta^{(0)}=0.
\]
For the choice of $\zeta^{(2)}$ and $\kappa_j$, $j=1,2,0$, we also denote $\zeta^{(1)}=(\tau^{(1)},\eta^{(1)})$, $\zeta^{(0)}=(\tau^{(0)},\eta^{(0)})$. Without loss of generality, we assume that $|\eta^{(1)}|=|\eta^{(0)}|=c_P^{-1}$ and then $\tau^{(1)}=\tau^{(0)}=1$. We seek $\zeta^{(2)}$ of the form $\zeta^{(2)}=\frac{1}{1+a}(a\zeta^{(1)}+\zeta^{(0)})=(1,\eta^{(2)})$ with $|\eta^{(2)}|=c_S^{-1}$. Then we have the relation
\[
(a\tau^{(1)}+\tau^{(0)})^2=c_S^2|a\eta^{(1)}+\eta^{(0)}|^2.
\]
By simple calculation, the above equality reduces to
\begin{equation}\label{equationPPSa}
(c_P^2-c_S^2)a^2+2(c_P^2-c_S^2\eta^{(0)}\cdot\eta^{(1)})a+c_P^2-c_S^2=0.
\end{equation}
Since $c_P>c_S$, the determinant of this quadratic equation is positive, then there exist two distinct values of $a$ satisfying the equation.\\

By the assumption that $(\Omega,g_S)$ is non-trapping and $\partial\Omega$ is convex with respect to $g_S$, the geodesic $\gamma^S_{x_q,-\eta^{(2)}}$ hits the boundary transversely at some point $x_2$ with exit direction $-\xi_2$. Take $t_2=t_q-\tau^S(x_q,-\eta^{(2)})$. Since $T>2\mathrm{diam}(\Omega,g_S)$, one can always choose $t_0,t_1\in (0,T)$ properly such that $t_2>0$. Denote $\vartheta^{(2)}(t)=(t,\gamma^S_{x_2,\xi_2}(t-t_2))$. Then one can see that $\vartheta^{(1)}\cap \vartheta^{(2)}\cap \vartheta^{(0)}=\{q\}$ and $\zeta^{(2)}=\dot{\vartheta}^{(2)}(q)^{\flat,S}$.
\begin{figure}[htbp]
\centering
\includegraphics[width=0.4\textwidth]{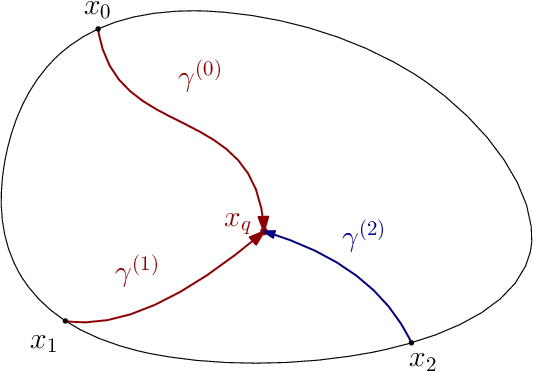}
\caption{The nonlinear interaction of a $P$-wave and an $S$-wave generates a new $P$-wave.}
\label{PSPnonlinear}
\end{figure}

Without loss of generality we can assume that
\[
\nabla\varphi^{(1)}(q)=\eta^{(1)},\quad \nabla\varphi^{(0)}(q)=\eta^{(0)},\quad \nabla\varphi^{(2)}(q)=\eta^{(2)},
\]
and 
\[
\begin{split}
\mathbf{a}^{(1)}_0(q)=\eta^{(1)},\quad\mathbf{a}^{(0)}_0(q)=\eta^{(0)},\quad \mathbf{a}^{(2)}_0(q)=\alpha,
\end{split}
\]
with $\alpha\perp\eta^{(2)}$. Take $\alpha\in\mathrm{span}\{\eta^{(1)},\eta^{(0)}\}$. By the choices of the phase functions we have
 \[
 S(q)=0,
 \] 
\[
\partial_tS(q)=0,\quad \nabla S(q)=0,
\]
and
\[
\Im S(t,x)\geq cd((t,x),q)^2 \text{ for }(t,x) \text{ in a neighborhood of }q,\text{ for some } c>0.
\]
 Then using the method of stationary phase, we obtain that for some constant $c_0$,
\[
\begin{split}
&c_0\lim_{\varrho\rightarrow+\infty}\varrho^2\int_0^T\int_\Omega e^{\mathrm{i}\varrho S}\mathcal{A}\mathrm{d}x\mathrm{d}t\\
=&\mathcal{A}(q)\\
=&(\lambda+2\mathscr{B})(q)\left((\alpha\cdot\eta^{(0)})(\eta^{(2)}\cdot\eta^{(0)})+(\alpha\cdot\eta^{(1)})(\eta^{(1)}\cdot\eta^{(2)})\right)\\
&+(3\mu+\mathscr{A})(q)\left((\eta^{(0)}\cdot \eta^{(1)})(\eta^{(0)}\cdot \eta^{(2)})(\alpha\cdot \eta^{(1)})+(\eta^{(0)}\cdot \eta^{(1)})(\eta^{(1)}\cdot \eta^{(2)})(\alpha\cdot \eta^{(0)})\right)
\end{split}
\]
If we write $\alpha=b\eta^{(1)}+\eta^{(0)}$ and $\cos\beta=\eta^{(0)}\cdot\eta^{(1)}$, the orthogonality condition $\alpha\cdot\eta^{(2)}=0$ then becomes $ab+(a+b)\cos\beta+1=0$. Then we obtain
\[
\mathcal{A}(q)=(\lambda+3\mu+\mathscr{A}+2\mathscr{B})(q)\cos\beta(ab\cos\beta+\cos\beta+a+b),
\]
which is generically non-zero if $(\lambda+3\mu+\mathscr{A}+2\mathscr{B})(q)\neq 0$.
\end{proof}

The above theorem allows us to recover an abridged broken scattering relation $\mathcal{R}'$ associated with $(\Omega,g_P)$. Then by Theorem \ref{brokentobdfs}, we can recover $g_P=c_P^{-2}\mathrm{d}s^2$ up to a diffeomorphism fixing the boundary $\partial \Omega$.
Since $g_P$ is conformal to the Euclidean metric, one can uniquely determine $c_P$ in $\overline{\Omega}$ (cf., for example, \cite[Proposition 3.3]{lionheart1997conformal}).

\begin{remark}
One obstacle for recovering a sufficiently large subset of the broken scattering relation for $S$-waves is that the plane spanned two $S$-wave light-like covectors does not necessarily have intersection with the set of light-like covectors of $P$-wave. On technical level, if one switches the roles of $c_P$ and $c_S$ in equation \eqref{equationPPSa}, then the quadratic polynomial might not have real roots.
\end{remark}
\section{Determination of the $S$-wavespeed}\label{determinationSwavespeed}
To recover the $S$-wavespeed, we use the already determined $P$-wavespeed $c_P$. We still choose $u^{(1)}$, $u^{(0)}$ to be $P$-waves and $u^{(2)}$ to be $S$-waves as the previous section.

Fix $q=(\frac{T}{2},x_q)\in (0,T)\times\Omega$, and take $x_1=x_0$ to the closest point on $\partial\Omega$ to $x_q$ with respect to $g_P$. Then $\gamma_{x_1,\nu_P(x_1)}(\mathrm{dist}_P(x_q,x_1))=x_q$. Denote $t_1=\frac{T}{2}-\mathrm{dist}_P(x_q,x_1)$, $t_0=\frac{T}{2}+\mathrm{dist}_P(x_q,x_1)$  and $\xi_1=\xi_0=\nu_P(x_1)$ and denote
\[
\vartheta^{(1)}(t)=(t,\gamma^P_{x_1,\xi_1}(t-t_1)),\quad \vartheta^{(0)}(t)=(t,\gamma^P_{x_1,\xi_1}(t_0-t)),
\]
which are $P$ null geodesics. Since $\gamma^P_{x_1,\xi_1}=\gamma^P_{x_0,\xi_0}$ is length minimizing between the point $x_q$ and the boundary $\partial\Omega$, we have
\[
 \vartheta^{(1)}\cap  \vartheta^{(0)}=\{q\}.
 \]
  Denote
\[
\zeta^{(1)}=\dot{\vartheta}^{(1)}(\frac{T}{2})^{\flat,P},\quad \zeta^{(0)}=\dot{\vartheta}^{(0)}(\frac{T}{2})^{\flat,P}.
\]
We can write $\zeta^{(1)}=(1,\eta^{(1)})$, $\zeta^{(0)}=(1,\eta^{(0)})$ with $\eta^{(0)}=-\eta^{(1)}$. We first choose some $(x_2,\xi_2,t_2)$ such that $\mathfrak{D}_{\delta_j,\sigma}(x_1,\xi_1,t_1,x_2,\xi_2,t_2,x_0,\xi_0,t_0)\neq 0$ for some $\sigma$ and a sequence $\{\delta_j\}$ converging to $0$. We claim such choice can be made.

To prove the claim, we notice that there exists $\zeta^{(2)}\in L_q^{S,+}((0,T)\times\Omega)$ such that
\[
\kappa_1\zeta^{(1)}+\kappa_2\zeta^{(2)}+\kappa_0\zeta^{(0)}=0,
\]
for some nonzero $\kappa_1,\kappa_2,\kappa_0$. Without loss of generality, one can assume $\zeta^{(1)}=(1,c_P^{-1},0,0)$, $\zeta^{(0)}=(1,-c_P^{-1},0,0)$. Then we can take $\zeta^{(2)}=(1,c_S^{-1},0,0)$ and $\kappa_1=c_P+c_S$, $\kappa_2=-2c_S$, $\kappa_0=-(c_P-c_S)$. Then one can determine $x_2,\xi_2,t_2$ as in previous section.

Once one such $(x_2,\xi_2,t_2)$ has been chosen, we fix the $S$-wave null geodesic
\[
\vartheta^{(2)}(t)=(t,\gamma^S_{x_2,\xi_2}(t-t_2)),
\]
and let $(x_1,\xi_1,t_1)$ and $(x_0,\xi_0,t_0)$ vary. We consider the following subset of $(0,T)\times\Omega$,
\[
\begin{split}
\Theta'=\big\{q'\in (0,T)\times\Omega\vert &\text{there exist }(x_1,\xi_1,t_1,x_0,\xi_0,t_0) \text{ such that }\vartheta^{(1)}\cap\vartheta^{(0)}=\{q'\}\\
&\text{and }\mathfrak{D}_{\delta_j,\sigma}(x_1,\xi_1,t_1,x_2,\xi_2,t_2,x_0,\xi_0,t_0)\neq 0\\
&\text{for some }\sigma\text{ and a sequence }\delta_j\text{ converging to }0\big\}.
\end{split}
\]
By previous discussions it is obvious that $q\in \Theta'$. We claim that $\Theta'$ is a subset of $\vartheta^{(2)}$ containing a neighborhood of $q$. To prove the claim first assume that $q'\notin\vartheta^{(2)}$, then $\vartheta^{(1)}\cap\vartheta^{(2)}\cap \vartheta^{(0)}=\emptyset$. By taking $\delta$ small enough, the supports of $\mathbf{a}^{(1)},\mathbf{a}^{(2)},\mathbf{a}^{(0)}$ would have disjoint union, and then 
\[
\mathfrak{D}_{\delta,\sigma}(x_1,\xi_1,t_1,x_2,\xi_2,t_2,x_0,\xi_0,t_0)=0.
\]
Now for any $q'=(t',x_q')\in \vartheta^{(2)}$ sufficiently close to $q$, we choose $\zeta'_1=(1,\eta_1')\in L_{q'}^{P,+}$ sufficiently close to $\zeta^{(1)}\in L_{q}^{P,+}$. Let $x_1'=\gamma_{q',\eta_1'}(\tau(q',\eta_1'))$, $\xi_1'= \dot{\gamma}_{q',\eta_1'}(\tau(q',\eta_1'))^{\flat,P}$ and $t_1'=t'-\tau(q',\eta_1')$.
Then if we let $\vartheta_1'(t)=(t,\gamma_{x_1',\xi_1'}(t-t_1'))$, then $\vartheta_1'(t')=q'$. Then there exists $\zeta_0'=(1,\eta_0')\in L_{q'}^{P,+}$ sufficiently close to $\zeta^{(0)}\in L_{q}^{P,+}$ such that
\[
\kappa_1\zeta_1'+\kappa_2\zeta^{(2)}+\kappa_0\zeta_0'=0,
\]
with some nonzero constants $\kappa_1,\kappa_2,\kappa_0$. Then let $x'_0=\gamma_{q',\zeta_0'}(\tau(q',\eta_0'))$, $\xi_0'= \dot{\gamma}_{q',\eta_0'}(\tau(q',\eta_0'))^{\flat,P}$ and $t_0'=t'+\tau(q',\eta_1')$. Let $\vartheta_0'(t)=(t,\gamma_{x_0',\xi_0'}(t_0'-t))$.
Since $\tau^S_R$ is continuous, $\tau^S(q',\eta_1')<\tau^S_R(q',\eta_1')$, $\tau^S(q',\eta_0')<\tau^S_R(q',\eta_0')$ for $(q',\zeta_1')$ sufficiently close to $(q,\zeta^{(1)})$. Then $\vartheta_1'\cap \vartheta_0'=\{q'\}$. Similar to previous discussions, for $\delta$ arbitrarily small, we have $\mathfrak{D}_{\delta,\sigma}(x'_1,\xi'_1,t'_1,x_2,\xi_2,t_2,x'_0,\xi'_0,t'_0)\neq 0$. This proves that $\Theta'$ contains an open neighborhood of $q$ on $\vartheta^{(2)}$.

This enables us to recover the tangent vector $\dot{\vartheta}^{(2)}(\frac{T}{2}):=(1,\eta_2)$, where $|\eta_2|_{g_S}=1$. Then one can simply recover $c_S^2(x_q)=|\eta_2|^2$. Since $x_q$ could be any point in $\Omega$, we can recover $c_S$ in $\overline{\Omega}$.

\section{Determination of the density and two nonlinear parameters}\label{uniquenessnonlinearparameters}
With $c_P$ and $c_S$ already known, we prove the unique determination of the density $\rho$ and two nonlinear parameters $\mathscr{A}$, $\mathscr{B}$. We adopt the procedure already taken in \cite{uhlmann2024determination}, where one can find more details.

Fix $q=(\frac{T}{2},x_q)$. Assume $x_1$ is the closest point on $\partial\Omega$ to $x_q$ with respect to $g_P$, and $x_0$ is the closest point on $\partial\Omega$ with respect to $g_S$. Then take $\xi_1=\nu(x_1)$, $\xi_0=\nu(x_0)$ and $t_1=\frac{T}{2}-\mathrm{dist}_P(x_q,x_1)$, $t_0=\frac{T}{2}+\mathrm{dist}_S(x_q,x_0)$. Denote
\[
\vartheta^{(1)}(t)=(t,\gamma^P_{x_1,\xi_1}(t-t_1)),\quad \vartheta^{(0)}(t)=(t,\gamma^S_{x_0,\xi_0}(t_0-t)).
\]
Clearly,
\[
q\in \vartheta^{(1)}\cap\vartheta^{(0)}.
\]
Let
\[
\zeta^{(1)}=\dot{\vartheta}^{(1)}(\frac{T}{2})^{\flat,P},\quad \zeta^{(0)}=\dot{\vartheta}^{(0)}(\frac{T}{2})^{\flat,S}.
\]
We claim that there exists $\zeta^{(2)}\in L_q^{S,+}((0,T)\times\Omega)$ such that
\[
\kappa_1\zeta^{(1)}+\kappa_2\zeta^{(2)}+\kappa_0\zeta^{(0)}=0,
\]
for some nonzero $\kappa_1,\kappa_2,\kappa_0$. Without loss of generality, denote $\zeta^{(1)}=(1,\eta^{(1)})$, $\zeta^{(0)}=(1,\eta^{(0)})$ with $|\eta^{(1)}|=c_P^{-1}$, $|\eta^{(2)}|=c_S^{-1}$, and seek $\zeta^{(2)}$ of the form $\zeta^{(2)}=\frac{1}{1+b}(\zeta^{(1)}+b\zeta^{(0)})=:(1,\eta^{(2)})$. Then we have the equation
\[
(1+b)^2=c_S^2(c_P^{-2}+2b\eta^{(1)}\cdot\eta^{(0)}+b^2c_S^{-2}),
\]
which could be simplified into
\[
2b(c_S^{-2}-\eta^{(1)}\cdot\eta^{(0)})=c_P^{-2}-c_S^{-2}.
\]
Since $|\eta^{(1)}\cdot\eta^{(0)}|\leq c_P^{-1}c_S^{-1}<c_S^{-2}$, the above equation has a solution. Therefore the claim is proved. Take $t_2=\frac{T}{2}-\tau^S(x_q,-\eta^{(2)})$, $\xi_2=\dot{\gamma}^S_{x_q,-\eta^{(2)}}(\tau^S(x_q,-\eta^{(2)}))$
and
\[
\vartheta^{(2)}(t)=(t,\gamma^S_{x_2,\xi_2}(t-t_2)).
\]
Since $\gamma^S_{x_0,\xi_0}([0,t_0-\frac{T}{2}])$ is length-minimizing in $(\Omega,g_S)$, $\vartheta^{(2)}$ and $\vartheta^{(0)}$ intersects only at $q$ in $[\frac{T}{2},T)\times\Omega$. Also because $\gamma^P_{x_1,\xi_1}([0,\frac{T}{2}-t_1])$ is length-minimizing in $(\Omega,g_P)$ and $c_P>c_S$, $\vartheta^{(2)}$ and $\vartheta^{(1)}$ intersects only at $q$ in $(0,\frac{T}{2}]\times\Omega$. Therefore
\[
\vartheta^{(1)}\cap \vartheta^{(2)}\cap \vartheta^{(0)}=\{q\}.
\]

We then use Gaussian beam solutions $u_{\kappa_j\varrho}^{(j)}$ concentrating near $\vartheta^{(j)}$, $j=1,2,0$, without reflections as constructed in Section \ref{gaussianwithoutreflection}, of the following forms,
\[
\begin{split}
&u_{\kappa_1\varrho}^{(1)}=\sum_{k=0}^{2K}(\kappa_1\varrho)^{-k}\mathbf{a}^{(1)}_{P,k}e^{\mathrm{i}\kappa_1\varrho\varphi_P^{(1)}},\\
&u_{\kappa_2\varrho}^{(2)}=\sum_{k=0}^{2K}(\kappa_2\varrho)^{-k}\mathbf{a}^{(2)}_{S,k}e^{\mathrm{i}\kappa_2\varrho\varphi_S^{(2)}},\\
&u_{\kappa_0\varrho}^{(0)}=\sum_{k=0}^{2K}(\kappa_0\varrho)^{-k}\mathbf{a}^{(0)}_{S,k}e^{\mathrm{i}\kappa_0\varrho\varphi_S^{(0)}}.
\end{split}
\]
Since $c_P$ and $c_S$ are already known, we can construct above Gaussian beams such that
\[
\begin{split}
\mathbf{a}^{(1)}_0(q)&=\rho^{-1/2}(x_q)c_P\eta^{(1)},\\
\mathbf{a}^{(2)}_0(q)&=\rho^{-1/2}(x_q)\alpha_2,\\
\mathbf{a}^{(0)}_0(q)&=\rho^{-1/2}(x_q)\alpha_0,
\end{split}
\]
where $\alpha_2\cdot\eta^{(2)}=0$, $\alpha_0\cdot\eta^{(0)}=0$, $|\alpha_2|=|\alpha_0|=1$. Then one can calculate
\[
\begin{split}
\mathcal{A}(q)=&\rho^{-3/2}\mathscr{B}(x_q)(\alpha_2\cdot\eta^{(0)})(\alpha_0\cdot\eta^{(2)})+(\lambda+\mathscr{B})(x_q)(\alpha_2\cdot\alpha_0)(\eta^{(2)}\cdot\eta^{(0)})\\
&+\rho^{-3/2}(\mu+\frac{\mathscr{A}}{2})(x_q)\left[(\alpha_2\cdot\eta^{(1)})(\eta^{(1)}\cdot\eta^{(0)})(\alpha_0\cdot\eta^{(2)})+(\eta^{(1)}\cdot\eta^{(2)})(\alpha_2\cdot\eta^{(0)})(\alpha_0\cdot\eta^{(1)})\right]\\
&+\rho^{-3/2}(2\mu+\frac{\mathscr{A}}{2})(x_q)\left[(\eta^{(1)}\cdot\alpha_2)(\eta^{(1)}\cdot\alpha_0)(\eta^{(2)}\cdot\eta^{(0)})+(\alpha_2\cdot\alpha_0)(\eta^{(1)}\cdot\eta^{(0)})(\eta^{(1)}\cdot\eta^{(2)})\right].
\end{split}
\]

Notice that $\eta^{(2)}\in\mathrm{span}\{\eta^{(1)},\eta^{(0)}\}$, we can first choose $\alpha_0=\alpha_2=\alpha\perp \mathrm{span}\{\eta^{(1)},\eta^{(0)}\}$. Then
\[
\mathcal{A}(q)=\rho^{-3/2}(\lambda+\mathscr{B})(x_q)(\eta^{(2)}\cdot\eta^{(0)})+\rho^{-3/2}(2\mu+\frac{\mathscr{A}}{2})(x_q)(\eta^{(1)}\cdot\eta^{(0)})(\eta^{(1)}\cdot\eta^{(2)}).
\]
By perturbing $\eta^{(1)}$, we can recover
\[
\rho^{-3/2}(\lambda+\mathscr{B})(x_q)\quad\text{and}\quad \rho^{-3/2}(2\mu+\frac{\mathscr{A}}{2})(x_q)
\]
from $\mathcal{A}(q)$.

Next we take $\alpha_0,\alpha_2\in\mathrm{span}\{\eta^{(1)},\eta^{(0)}\}$. Then $\eta^{(1)},\eta^{(0)},\eta^{(2)},\alpha_0,\alpha_2$ are all in the same plane. Assume
\[
\eta^{(1)}\cdot\eta^{(0)}=\cos\alpha,\quad \eta^{(1)}\cdot\eta^{(2)}=\cos\psi.
\]
Then
\[
\begin{split}
\alpha_2\cdot\eta^{(0)}&=\cos(\frac{\pi}{2}-\alpha+\psi)=\sin(\alpha-\psi),\\
\alpha_0\cdot\eta^{(2)}&=\cos(\frac{\pi}{2}+\alpha-\psi)=-\sin(\alpha-\psi),\\
\alpha_2\cdot\eta^{(1)}&=\cos(\frac{\pi}{2}+\psi)=-\sin\psi,\\
\alpha_0\cdot\eta^{(1)}&=\cos(\frac{\pi}{2}+\alpha)=-\sin\alpha.
\end{split}
\]
Then we can recover
\[
\begin{split}
&-\rho^{-3/2}\mathscr{B}(x_q)\sin^2(\alpha-\psi)+\rho^{-3/2}(\mu+\frac{\mathscr{A}}{2})(x_q)(\sin\psi\cos\alpha\sin(\alpha-\psi)-\cos\psi\sin(\alpha-\psi)\sin\alpha)\\
=&-\rho^{-3/2}(\mu+\frac{\mathscr{A}}{2}+\mathscr{B})(x_q)\sin^2(\alpha-\psi)
\end{split}
\]
from $\mathcal{A}(q)$, since $\rho^{-3/2}(\lambda+\mathscr{B})$ and $\rho^{-3/2}(2\mu+\frac{\mathscr{A}}{2})$ are already recovered. Note that $\sin(\alpha-\psi)\neq 0$, so we can recover $\rho^{-3/2}(\mu+\frac{\mathscr{A}}{2}+\mathscr{B})(x_q)$.

Since $x_q$ could be any point in $\Omega$, one can determine $\mathscr{A},\mathscr{B},\rho$ in $\overline{\Omega}$ with $\frac{\lambda}{\rho}$, $\frac{\mu}{\rho}$ already known.
\section{Determination of the last nonlinear parameter}\label{recoveryC}
Now there is only one parameter $\mathscr{C}$ remained to be determined. Under the more restrictive simplicity condition or foliation condition, the uniqueness of $\mathscr{C}$ has already been proved in \cite{uhlmann2021inverse}. If only the non-trapping condition is supposed, we prove uniqueness under one additional condition $\nabla(\mu^2/\rho)\neq 0$ in this section.

\subsection{Subprincipal amplitude of $S$-waves}
We calculate the explicit form of $\langle\nabla\varphi,\mathbf{a}_1\rangle$ for $S$-wave Gaussian beams. Similar calculations can also be found in \cite{yi2026leakage}. 
We can write the equation
\[
\langle\mathrm{i}T(x,\varphi)\mathbf{a}_0-E(x,\varphi)\mathbf{a}_1,\nabla\varphi\rangle=0,\quad \text{on }\vartheta
\]
as
\[
\begin{split}
&2\rho\partial_t\varphi\langle\partial_t\mathbf{a}_0,\nabla\varphi\rangle-2\mu\langle\nabla\mathbf{a}_0,\nabla\varphi\otimes\nabla\varphi\rangle-(\lambda+\mu)(\nabla\cdot\mathbf{a}_0)|\nabla\varphi|^2-(\nabla\mu\cdot\mathbf{a}_0)|\nabla\varphi|^2\\
&-\mathrm{i}(\lambda+\mu)\langle\nabla\varphi,\mathbf{a}_1\rangle|\nabla\varphi|^2=0\quad \text{on }\vartheta.
\end{split}
\]
Taking inner product of \eqref{Stransportinvariant} and $\nabla\varphi$, one obtains
\[
2\rho\partial_t\varphi\langle\partial_t\mathbf{a}_0,\nabla\varphi\rangle-2\mu\langle\nabla\mathbf{a}_0,\nabla\varphi\otimes\nabla\varphi\rangle+\rho\langle\mathbf{a}_0,\nabla(\partial_t\varphi)^2\rangle-\mu\langle\mathbf{a}_0,\nabla|\nabla\varphi|^2\rangle=0\quad \text{on }\vartheta.
\]
Noticing also that
\[
\begin{split}
0=&\nabla(\rho(\partial_t\varphi)^2-\mu|\nabla\varphi|^2)\\
=&\nabla\rho(\partial_t\varphi)^2+\rho\nabla(\partial_t\varphi)^2-\nabla\mu|\nabla\varphi|^2-\mu\nabla|\nabla\varphi|^2\\
=&\rho\nabla(\partial_t\varphi)^2-\mu\nabla|\nabla\varphi|^2+(\mu\rho^{-1}\nabla\rho-\nabla\mu)|\nabla\varphi|^2\quad \text{on }\vartheta.
\end{split}
\]
Summarizing above identities, we get
\[
\langle\nabla\varphi,\mathbf{a}_1\rangle=\mathrm{i}\nabla\cdot\mathbf{a}_0+\frac{\mathrm{i}}{\lambda+\mu}\langle2\nabla\mu-\mu\rho^{-1}\nabla\rho,\mathbf{a}_0\rangle\quad \text{on }\vartheta.
\]
 Then
\[
\mathrm{i}\nabla\varphi\cdot\mathbf{a}_1+\nabla\cdot\mathbf{a}_0=-\frac{1}{\lambda+\mu}\langle2\nabla\mu-\mu\rho^{-1}\nabla\rho,\mathbf{a}_0\rangle\quad \text{on }\vartheta.
\]
We will use the asymptotic behavior
\[
\begin{split}
e^{-\mathrm{i}\varrho\varphi}\nabla\cdot u_\varrho=&\mathrm{i}\varrho\nabla\varphi\cdot\mathbf{a}_0+\mathrm{i}\nabla\varphi\cdot\mathbf{a}_1+\nabla\cdot\mathbf{a}_0+\mathcal{O}(\varrho^{-1})\\
=&-\frac{1}{\lambda+\mu}\langle2\nabla\mu-\mu\rho^{-1}\nabla\rho,\mathbf{a}_0\rangle+\mathcal{O}(\varrho^{-1})
\end{split}
\]
near $\vartheta$.
\subsection{Determination of $\mathscr{C}$}
To recover $\mathscr{C}$, we take $u_\varrho^{(1)}$, $u_\varrho^{(0)}$ to be $P$-wave Gaussian beams, and $u_\varrho^{(2)}$ an $S$-wave Gaussian beam as in Section \ref{determinationPwavespeed}. We can take $\vartheta^{(1)}$, $\vartheta^{(2)}$ and $\vartheta^{(0)}$ so that they intersect only at $q$. One can recover
\[
\int_0^T\int_\Omega\mathscr{C}(\nabla\cdot u^{(1)})(\nabla\cdot u^{(2)})(\nabla\cdot u^{(0)})\mathrm{d}x\mathrm{d}t
\]
from the integral identity \eqref{integralform}, since all other parameters are already determined.
Now assume $\nabla(\mu^2\rho^{-1})(x_q)\neq 0$.
Using the asymptotic behaviors of Gaussian beams and the method of stationary phase, one can recover
\[
\mathscr{C}(\nabla\varphi^{(1)}\cdot\mathbf{a}_0^{(1)})\langle2\nabla\mu-\mu\rho^{-1}\nabla\rho,\mathbf{a}^{(2)}_0\rangle(\nabla\varphi^{(0)}\cdot\mathbf{a}_0^{(0)})
\]
at $q$. By perturbing $\vartheta^{(1)},\vartheta^{(0)}$ if needed, one can guarantee $\langle2\nabla\mu-\mu\rho^{-1}\nabla\rho,\mathbf{a}^{(2)}_0\rangle\neq 0$ at $q$. Then we can determine $\mathscr{C}(x_q)$.

\section*{Acknowledgement}
JZ thanks the enlightening discussions with Yuchao Yi.  \\
\bibliographystyle{abbrv}
\bibliography{biblio}

\begin{thebibliography}{10}

\bibitem{agemi2000global}
R.~Agemi.
\newblock Global existence of nonlinear elastic waves.
\newblock {\em Inventiones Mathematicae}, 142(2):225--250, 2000.

\bibitem{alexakis2024inverse}
S.~Alexakis, H.~Isozaki, M.~Lassas, and T.~Tyni.
\newblock Inverse scattering problems for non-linear wave equations on
  lorentzian manifolds.
\newblock {\em arXiv preprint arXiv:2411.09354}, 2024.

\bibitem{balehowsky2022inverse}
T.~Balehowsky, A.~Kujanp{\"a}{\"a}, M.~Lassas, and T.~Liimatainen.
\newblock An inverse problem for the relativistic {B}oltzmann equation.
\newblock {\em Communications in Mathematical Physics}, 396(3):983--1049, 2022.

\bibitem{bao2014sensitivity}
G.~Bao and H.~Zhang.
\newblock Sensitivity analysis of an inverse problem for the wave equation with
  caustics.
\newblock {\em Journal of the American Mathematical Society}, 27(4):953--981,
  2014.

\bibitem{belishev1996boundary}
M.~Belishev and A.~Katchalov.
\newblock Boundary control and quasiphotons in the problem of reconstruction of
  a {R}iemannian manifold via dynamic data.
\newblock {\em Journal of Mathematical Sciences}, 79(4):1172--1190, 1996.

\bibitem{bhattacharyya2018local}
S.~Bhattacharyya.
\newblock Local uniqueness of the density from partial boundary data for
  isotropic elastodynamics.
\newblock {\em Inverse Problems}, 34(12):125001, 2018.

\bibitem{chen2021detection}
X.~Chen, M.~Lassas, L.~Oksanen, and G.~P. Paternain.
\newblock Detection of {H}ermitian connections in wave equations with cubic
  non-linearity.
\newblock {\em Journal of the European Mathematical Society}, 24(7):2191--2232,
  2021.

\bibitem{chen2025inverse}
X.~Chen, M.~Lassas, L.~Oksanen, and G.~P. Paternain.
\newblock An inverse problem for the standard model of particle physics.
\newblock {\em arXiv preprint arXiv:2505.24454}, 2025.

\bibitem{chen2025stable}
X.~Chen, S.~Lu, and R.~Zhang.
\newblock Stable inversion of potential in nonlinear wave equations with cubic
  nonlinearity.
\newblock {\em Mathematische Annalen}, 392(3):4283--4314, 2025.

\bibitem{ciarlet2021mathematical}
P.~G. Ciarlet.
\newblock {\em Mathematical elasticity: {T}hree-dimensional elasticity}.
\newblock SIAM, 2021.

\bibitem{de2018nonlinear}
M.~de~Hoop, G.~Uhlmann, and Y.~Wang.
\newblock Nonlinear interaction of waves in elastodynamics and an inverse
  problem.
\newblock {\em Mathematische Annalen}, 376(1-2):765--795, 2020.

\bibitem{de2003finite}
W.~De~Lima and M.~Hamilton.
\newblock Finite-amplitude waves in isotropic elastic plates.
\newblock {\em Journal of sound and vibration}, 265(4):819--839, 2003.

\bibitem{dos2016calderon}
D.~Dos Santos~Ferreira, Y.~Kurylev, M.~Lassas, and M.~Salo.
\newblock The {C}alder{\'o}n problem in transversally anisotropic geometries.
\newblock {\em Journal of the European Mathematical Society},
  18(11):2579--2626, 2016.

\bibitem{feizmohammadi2019timedependent}
A.~Feizmohammadi, J.~Ilmavirta, Y.~Kian, and L.~Oksanen.
\newblock Recovery of time dependent coefficients from boundary data for
  hyperbolic equations.
\newblock {\em J. Spectr. Theory}, 11(3):1107--1143, 2021.

\bibitem{feizmohammadi2021inverse}
A.~Feizmohammadi, M.~Lassas, and L.~Oksanen.
\newblock Inverse problems for nonlinear hyperbolic equations with disjoint
  sources and receivers.
\newblock In {\em Forum of Mathematics, Pi}, volume~9, page e10. Cambridge
  University Press, 2021.

\bibitem{feizmohammadi2019inverse}
A.~Feizmohammadi and L.~Oksanen.
\newblock An inverse problem for a semi-linear elliptic equation in riemannian
  geometries.
\newblock {\em Journal of Differential Equations}, 269(6):4683--4719, 2020.

\bibitem{feizmohammadi2019recovery}
A.~Feizmohammadi and L.~Oksanen.
\newblock Recovery of zeroth order coefficients in non-linear wave equations.
\newblock {\em Journal of the Institute of Mathematics of Jussieu},
  21(2):367–393, 2022.

\bibitem{gol1961interaction}
Z.~Gol’dberg.
\newblock Interaction of plane longitudinal and transverse elastic waves.
\newblock {\em Sov. Phys. Acoust}, 6(3):306--310, 1961.

\bibitem{gurtin1981topics}
M.~E. Gurtin.
\newblock {\em Topics in finite elasticity}.
\newblock SIAM, 1981.

\bibitem{hansen2003propagation}
S.~Hansen and G.~Uhlmann.
\newblock Propagation of polarization in elastodynamics with residual stress
  and travel times.
\newblock {\em Mathematische Annalen}, 326(3):563--587, 2003.

\bibitem{hintz2024inverse}
P.~Hintz, A.~S. Barreto, G.~Uhlmann, and Y.~Zhang.
\newblock Inverse nonlinear scattering by a metric.
\newblock {\em arXiv preprint arXiv:2411.09671}, 2024.

\bibitem{hintz2022dirichlet}
P.~Hintz, G.~Uhlmann, and J.~Zhai.
\newblock The {D}irichlet-to-{N}eumann map for a semilinear wave equation on
  {L}orentzian manifolds.
\newblock {\em Communications in Partial Differential Equations},
  47(12):2363--2400, 2022.

\bibitem{hintz2022inverse}
P.~Hintz, G.~Uhlmann, and J.~Zhai.
\newblock An inverse boundary value problem for a semilinear wave equation on
  lorentzian manifolds.
\newblock {\em International Mathematics Research Notices},
  2022(17):13181--13211, 2022.

\bibitem{kachalov2001inverse}
A.~Kachalov, Y.~Kurylev, and M.~Lassas.
\newblock {\em Inverse boundary spectral problems}.
\newblock Chapman and Hall/CRC, 2001.

\bibitem{katchalov1998multidimensional}
A.~Katchalov and Y.~Kurylev.
\newblock Multidimensional inverse problem with incomplete boundary spectral
  data.
\newblock {\em Communications in Partial Differential Equations},
  23(1-2):27--59, 1998.

\bibitem{klingenberg1995riemannian}
W.~Klingenberg.
\newblock {\em Riemannian geometry}, volume~1.
\newblock Walter de Gruyter, 1995.

\bibitem{kurylev2022inverse}
Y.~Kurylev, M.~Lassas, L.~Oksanen, and G.~Uhlmann.
\newblock Inverse problem for {E}instein-scalar field equations.
\newblock {\em Duke Mathematical Journal}, 171(16):3215--3282, 2022.

\bibitem{kurylev2010rigidity}
Y.~Kurylev, M.~Lassas, and G.~Uhlmann.
\newblock Rigidity of broken geodesic flow and inverse problems.
\newblock {\em American Journal of Mathematics}, 132(2):529--562, 2010.

\bibitem{kurylev2018inverse}
Y.~Kurylev, M.~Lassas, and G.~Uhlmann.
\newblock Inverse problems for {L}orentzian manifolds and non-linear hyperbolic
  equations.
\newblock {\em Inventiones Mathematicae}, 212(3):781--857, 2018.

\bibitem{landau1960theory}
L.~Landau, E.~Lifshitz, J.~Sykes, W.~Reid, and E.~H. Dill.
\newblock Theory of elasticity: {V}ol. 7 of course of theoretical physics.
\newblock {\em Phys. Today}, 13:44, 1960.

\bibitem{lassas2021inverse}
M.~Lassas, T.~Liimatainen, Y.-H. Lin, and M.~Salo.
\newblock Inverse problems for elliptic equations with power type
  nonlinearities.
\newblock {\em Journal de Math{\'e}matiques Pures et Appliqu{\'e}es},
  145:44--82, 2021.

\bibitem{lassas2018inverse}
M.~Lassas, G.~Uhlmann, and Y.~Wang.
\newblock Inverse problems for semilinear wave equations on {L}orentzian
  manifolds.
\newblock {\em Communications in Mathematical Physics}, 360(2):555--609, 2018.

\bibitem{lionheart1997conformal}
W.~Lionheart.
\newblock Conformal uniqueness results in anisotropic electrical impedance
  imaging.
\newblock {\em Inverse problems}, 13(1):125--134, 1997.

\bibitem{michel1981rigidite}
R.~Michel.
\newblock Sur la rigidit{\'e} impos{\'e}e par la longueur des
  g{\'e}od{\'e}siques.
\newblock {\em Inventiones Mathematicae}, 65(1):71--83, 1981.

\bibitem{nursultanov2025determining}
M.~Nursultanov, L.~Oksanen, and L.~Tzou.
\newblock Determining lorentzian manifold from non-linear wave observation at a
  single point.
\newblock {\em Journal of Differential Equations}, 444:113563, 2025.

\bibitem{ogden1997non}
R.~W. Ogden.
\newblock {\em Non-linear elastic deformations}.
\newblock Courier Corporation, 1997.

\bibitem{paternain2015invariant}
G.~P. Paternain, M.~Salo, and G.~Uhlmann.
\newblock Invariant distributions, {B}eurling transforms and tensor tomography
  in higher dimensions.
\newblock {\em Mathematische Annalen}, 363(1-2):305--362, 2015.

\bibitem{rachele2000boundary}
L.~Rachele.
\newblock Boundary determination for an inverse problem in elastodynamics.
\newblock {\em Communications in Partial Differential Equations},
  25(11-12):1951--1996, 2000.

\bibitem{rachele2000inverse}
L.~Rachele.
\newblock An inverse problem in elastodynamics: uniqueness of the wave speeds
  in the interior.
\newblock {\em Journal of Differential Equations}, 162(2):300--325, 2000.

\bibitem{rachele2003uniqueness}
L.~Rachele.
\newblock Uniqueness of the density in an inverse problem for isotropic
  elastodynamics.
\newblock {\em Transactions of the American Mathematical Society},
  355(12):4781--4806, 2003.

\bibitem{sa2024recovery}
A.~S{\'a}~Barreto and P.~Stefanov.
\newblock Recovery of a general nonlinearity in the semilinear wave equation.
\newblock {\em Asymptotic Analysis}, 138(1-2):27--68, 2024.

\bibitem{sideris1996null}
T.~C. Sideris.
\newblock The null condition and global existence of nonlinear elastic waves.
\newblock {\em Inventiones Mathematicae}, 123(2):323--342, 1996.

\bibitem{sideris2000nonresonance}
T.~C. Sideris.
\newblock Nonresonance and global existence of prestressed nonlinear elastic
  waves.
\newblock {\em Annals of Mathematics}, 151(2):849--874, 2000.

\bibitem{stefanov2016boundary}
P.~Stefanov, G.~Uhlmann, and A.~Vasy.
\newblock Boundary rigidity with partial data.
\newblock {\em Journal of the American Mathematical Society}, 29(2):299--332,
  2016.

\bibitem{stefanov2017local}
P.~Stefanov, G.~Uhlmann, and A.~Vasy.
\newblock Local recovery of the compressional and shear speeds from the
  hyperbolic {DN} map.
\newblock {\em Inverse Problems}, 34(1):014003, 2017.

\bibitem{stefanov2018inverting}
P.~Stefanov, G.~Uhlmann, and A.~Vasy.
\newblock Inverting the local geodesic {X}-ray transform on tensors.
\newblock {\em Journal d'Analyse Mathematique}, 136(1):151--208, 2018.

\bibitem{stefanov2021local}
P.~Stefanov, G.~Uhlmann, and A.~Vasy.
\newblock Local and global boundary rigidity and the geodesic {X}-ray transform
  in the normal gauge.
\newblock {\em Annals of Mathematics}, 194(1):1--95, 2021.

\bibitem{uhlmann2020determination}
G.~Uhlmann and Y.~Wang.
\newblock Determination of space-time structures from gravitational
  perturbations.
\newblock {\em Communications on Pure and Applied Mathematics},
  73(6):1315--1367, 2020.

\bibitem{uhlmann2026inverse}
G.~Uhlmann, Y.~Yi, and J.~Zhai.
\newblock An inverse problem for compressible {E}uler's equations.
\newblock {\em arXiv preprint arXiv:2604.14636}, 2026.

\bibitem{uhlmann2021inverse}
G.~Uhlmann and J.~Zhai.
\newblock On an inverse boundary value problem for a nonlinear elastic wave
  equation.
\newblock {\em Journal de Math{\'e}matiques Pures et Appliqu{\'e}es},
  153:114--136, 2021.

\bibitem{uhlmann2024determination}
G.~Uhlmann and J.~Zhai.
\newblock Determination of the density in a nonlinear elastic wave equation.
\newblock {\em Mathematische Annalen}, 390(2):2825--2858, 2024.

\bibitem{uhlmann2024invertibility2}
G.~Uhlmann and J.~Zhai.
\newblock Invertibility of local geodesic transverse and mixed ray transforms
  {II}: higher order tensors.
\newblock {\em arXiv preprint arXiv:2402.12640}, 2024.

\bibitem{yi2026leakage}
Y.~Yi.
\newblock Leakage detection, collision relation, and self-adjoint cancellation
  in multi-velocity systems.
\newblock {\em arXiv preprint arXiv:2606.25218}, 2026.

\bibitem{zhai2026determination}
J.~Zhai.
\newblock Determination of the density in the linear elastic wave equation.
\newblock {\em Journal of Differential Equations}, 459:114064, 2026.

\end{thebibliography}

\end{document}